\documentclass[a4paper,11pt]{amsart}

\usepackage{amsmath,amsthm,amsfonts,amssymb,bm,graphicx, mathabx,mathrsfs, dsfont, mathtools}

\usepackage{comment}
\usepackage
{hyperref}

\hypersetup{colorlinks=true,citecolor=blue,linkcolor=blue,urlcolor=blue,
	pdfstartview=FitH }

\theoremstyle{plain}
\newtheorem{lemma}{Lemma}
\newtheorem{theorem}[lemma]{Theorem}

\newtheorem{prop}[lemma]{Proposition}

\numberwithin{lemma}{section}
\numberwithin{equation}{section}

\mathtoolsset{showonlyrefs=true}

\theoremstyle{definition}

\theoremstyle{remark}
\newtheorem{remark}{Remark}[section]

\renewcommand{\pmod}[1]{\,(\textup{mod}\,#1)}

\newcommand{\RR}{\mathbb{R}}

\newcommand{\sumstar}{\sideset{\ }{^*}\sum}
\newcommand{\sumflat}{\sideset{}{^\flat}\sum}

\newcommand{\sumd}{\sideset{}{^d}\sum}

\newcommand{\tRe}{\textup{Re }}

\newcommand{\bfrac}[2]{\left(\frac{#1}{#2}\right)}
\newcommand{\M}{\mathcal M}

\newcommand{\W}{\mathcal W}

\newcommand{\Wt}{\widetilde{\mathcal{W}}}

 \author[J. Li]{Junxian Li}
\address{Department of Mathematics\\
	University of California, Davis\\
	One Shields Avenue\\ Davis\\ CA 95616\\ USA
}
\email{junxian@math.ucdavis.edu}

\author[X. Li]{Xiannan Li}
\address{Mathematics Department\\ Kansas State University\\ Manhattan\\ KS 66503\\ USA}
\email{xiannan@ksu.edu}

\author[Y. Lin]{Yongxiao Lin}
\address{Data Science Institute, Shandong University, Jinan 250100, China; State Key Laboratory of Cryptography and Digital Economy Security, Shandong University, Jinan 250100, China
}
\email{yongxiao.lin@sdu.edu.cn}

\title{Simultaneous non-vanishing of twists of $\mathrm{GL}_3$ and $\mathrm{GL}_1$ $L$-functions}
\begin{document}
	\maketitle
	\begin{abstract}
		We show that for a fixed $\mathrm{SL}_3(\mathbb Z)$ Hecke--Maass cusp form $F$, there are infinitely many primitive Dirichlet characters $\chi$ such that $L(1/2, F\times \chi)$ and $L(1/2, \chi)$ do not vanish \emph{simultaneously}. More precisely, we obtain an asympotic formula with a {\em power-saving} error term for the product of these central values averaged over $\{\chi\bmod q: q\in \mathcal Q\}$ where $\mathcal Q$ is a set of squarefree integers with a small divisor of suitable size.  
	\end{abstract}
	\section{Introduction}
	 The study of non-vanishing of central values of $L$-functions has attracted considerable attention in recent years, motivated partially by its deep arithmetic implications such as the Birch–Swinnerton-Dyer conjecture and its connections to Landau-Siegel zeros (see \cite{Iwaniec–Sarnak}). In this paper, we are interested in the family of $L$-functions attached to Dirichlet character twists of a fixed $\mathrm{GL}_m$ cusp form.  
	 
	 To establish non-vanishing results, one wishes to establish asymptotic formulas for moments of these families of $L$-values. Despite substantial progress for the cases when $m\leq 2$ in recent years \cite{Iwaniec–Sarnak, Bui, KhanNgo,BFKMM, Zach}, relatively few results are known for twists of higher degree $L$-functions in this family.  The problem becomes more difficult as $m$ increases and typically requires a careful choice of the family to overcome certain obstacles for higher rank groups. 
	 For $m=3$, Luo \cite{Luo} chose the family of Dirichlet characters to consist of primitive characters $\chi \bmod q$ where $q$ also varies over a set consisting products of two primes of suitable sizes. The bilinear structure of the moduli was crucially used to overcome our lack of non-trivial estimates for sums of hyper-Kloosterman sums over the moduli. Blomer--Li recently bypassed this difficulty in \cite{BLi} to obtain the asymptotic formula for the full family where $q$ is allowed to range over all integers up to $Q$. Both \cite{Luo} and \cite{BLi} obtained asymptotic formula with power-saving error terms, which serves as a first step in establishing a positive proportion of non-vanishing in the family. 
	 For $m=4$, Radziwi\l\l--Yang \cite{RY} generalized the work of Luo using more special moduli 
	 $q$ whose prime factors lie in various meticulously chosen ranges. They obtained an asymptotic formula with a logarithmic-saving error term, however, obtaining such formulae in full generality with a power-saving error term remains a challenging problem. 
	 
	 In this paper, we consider a speical case when $m=4$ where the $L$-function factors into a product of $\mathrm{GL}_3$ and $\mathrm{GL}_1$ $L$-functions and we prove an asymptotic formula with a power-saving error term for the family of $L$-values. 
To be more precise, we consider the set 
$$\mathcal Q= \mathcal Q(Q_1, Q_2) = \left\{q: \mu^2(q)=1, q=q_1q_2, Q_1/2\le q_1 \le 2Q_1, Q_2\leq q_2 \leq 2Q_2\right\}$$ for some
parameters $Q_1, Q_2 > 0$. 
We study the moment 
\begin{align}
\sum_{q\in \mathcal Q}\Psi(\frac{q}{Q})\sumflat_{\chi \bmod q}L(1/2, F\times \chi)\overline{L(1/2, \chi)}
\end{align}
where $F$ is a fixed $\mathrm{SL}_3(\mathbb Z)$ Hecke--Maass cusp form, $Q = Q_1Q_2$, $\Psi$ is a smooth compactly support function, and the sum $\sum^\flat$ means a sum over all even primitive characters. 
We obtain the following result. 
\begin{theorem}\label{MainThm}
	Let $\Psi$ be a smooth function compactly supported in $[1,2]$ and $F, \mathcal Q$ and $Q$ be as above.  Then for any $\varepsilon>0$ we have
\begin{align}
&\sum_{q\in \mathcal Q}\Psi(\frac{q}{Q})\sumflat_{\chi \bmod q}L(1/2, F\times \chi)\overline{L(1/2, \chi)}\\=&\sum_{q\in \mathcal Q}\Psi\Big(\frac{q}{Q}\Big)\phi^{\flat}(q)L(1, F\times \chi_0)+ O\Big(Q^{15/8+\varepsilon}Q_2^{9/16}+\frac{Q^{2+\varepsilon}}{Q_2^{1/4}}\Big).
\end{align}
where $\chi_0$ is the principal character modulo $q$ and $\phi^\flat(q)$ is the number of even primitive characters modulo $q$.
\end{theorem}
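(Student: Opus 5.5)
We will follow the standard approximate-functional-equation approach and exploit the factorization $q=q_1q_2$ to obtain extra room in the off-diagonal terms. First, apply approximate functional equations to both $L(1/2,F\times\chi)$ (conductor $q^3$) and $\overline{L(1/2,\chi)}$ (conductor $q$). Since both root numbers involve Gauss sums, $\epsilon(F\times\chi)\overline{\epsilon(\chi)}$ is essentially $\tau(\chi)^3\overline{\tau(\chi)}/q^2=\tau(\chi)^2/q$. It is cleaner to write the product as a single degree-four object $L(1/2,(F\boxplus 1)\times\chi)$, or equivalently to use one approximate functional equation for the product with coefficients $\lambda_F(n)$ and $m$. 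This gives
\[
\sum_{n,m}\frac{\lambda_F(n)}{\sqrt{nm}}\,\chi(n)\overline{\chi}(m)\,V\!\Big(\frac{n m}{q^2}\Big)
\]
plus a dual term. The dual term carries the root-number factor and sums essentially $\lambda_F(n)\overline{\chi}(n)\,\overline{\chi}(m)$ weighted by $\tau(\chi)^2$.

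Second, we average over even primitive $\chi\bmod q$, using the orthogonality relation
\[
\sumflat_{\chi \bmod q}\chi(a)=\tfrac12\sum_{q=dr}\mu(d)\phi(r)\big(\mathbf 1_{a\equiv \pm1 \ (r)}\big).
\]
\begin{itemize}
\item The diagonal $n=m$ in the first term gives $\phi^\flat(q)\sum_n \lambda_F(n)\chi_0(n)n^{-1}V(n^2/q^2)$. Shifting contours, this yields the main term $\phi^\flat(q)L(1,F\times\chi_0)$, with a negligible remainder because $L(s,F)$ has no pole.
\item The off-diagonal $n\equiv \pm m \pmod r$ with $n\ne m$ is handled through the bilinear structure. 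We write $n\mp m=rh$, and since $nm\ll q^{2+\varepsilon}$, the ranges are limited. Summing over $q\in\mathcal Q$ with the smooth weight, we apply a $\mathrm{GL}_3$ Voronoi formula (or Poisson summation in $m$ for fixed $n$ when $m$ is short) together with the factorization $r=r_1r_2$. The result is a shifted-convolution type sum $\sum \lambda_F(n)$ over $n\equiv m \pmod r$. Averaging over the modulus $r\sim Q$ (large sieve/dispersion in $q_2$) produces a power saving.
\end{itemize}

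Third, in the dual term we average $\tau(\chi)^2$ over $\chi$. This produces Kloosterman-type sums: $\sum^\flat_\chi \tau(\chi)^2\overline{\chi}(nm)$ is essentially $q\,S(\overline{nm}\ldots)$-type expressions, more precisely a sum $\sum_{x} e((x+\overline{x}\,\overline{nm})/q)$. By the CRT this factors as a product of Kloosterman sums modulo $q_1$ and modulo $q_2$. We then apply $\mathrm{GL}_3$ Voronoi summation in $n$ (length about $q^{3/2}$), which converts the additive twist into hyper-Kloosterman sums. We will not sum these nontrivially over all moduli. Instead, we apply Cauchy–Schwarz keeping $q_1$ (or $q_2$) outside, and use Poisson summation in the remaining modulus. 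This is the Luo-type bilinear trick, and it turns the hyper-Kloosterman sums into complete character sums modulo $q_2$ that are bounded by Deligne's estimates with square-root cancellation. Optimizing the lengths gives the error $Q^{15/8+\varepsilon}Q_2^{9/16}$. The other term $Q^{2+\varepsilon}Q_2^{-1/4}$ arises from the part where the $q_2$-sum is too short to exploit and only the $q_2$-averaging saving is available, such as the contribution of non-generic (degenerate) frequencies after Poisson summation.

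The main obstacle is the dual/off-diagonal analysis after Voronoi summation. There we must extract cancellation from sums of products of $\mathrm{GL}_3$ Fourier coefficients against hyper-Kloosterman sums over moduli $q=q_1q_2$. The plan is to exploit the divisor $q_2$ through Cauchy–Schwarz and Poisson summation, and to verify that the resulting complete sums have square-root cancellation except on a controllable degenerate locus. Balancing the diagonal and off-diagonal contributions of that Cauchy–Schwarz step is where the exponents $15/8$ and $9/16$ should come from.
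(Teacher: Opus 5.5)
Your plan has a genuine gap in the off-diagonal of the first term. With a balanced approximate functional equation for the product (weight $V(nm/q^2)$), the ranges $m\ll q^{\varepsilon}$, $n\asymp q^{2}$ (and symmetrically $n$ small, $m\asymp q^2$) give a complementary divisor $h=|n\mp m|/r$ of size $q$ when $r\asymp q$. So divisor switching produces no drop in conductor. The sentence ``Voronoi plus the factorization $r=r_1r_2$ plus dispersion in $q_2$ produces a power saving'' names no mechanism. $\mathrm{GL}_3$ Voronoi in $n$ modulo $r$ simply regenerates sums of $\mathrm{Kl}_3(\pm mn;r)$, which is the $\tau(\chi)^3$ difficulty again. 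Moreover, the factorization $q=q_1q_2$ does not help this term; in the paper the bound for this part actually deteriorates as $Q_2$ grows. The missing idea is to \emph{unbalance}. Take the approximate functional equation of $L(1/2,F\times\chi)$ with lengths $q^{2-\delta}$ and $q^{1+\delta}$. For each of the two pieces, choose the approximate functional equation of $L(1/2,\bar\chi)$ with $X=q^{\pm(1/2+\eta)}$, so that only one sum, of length $q^{1+\eta}$, survives. Then $h\ll Q^{1-\delta}$ uniformly. Conrey--Iwaniec--Soundararajan divisor switching together with the (hybrid) large sieve with divisibility then gives the off-diagonal bound $Q^{2-\delta/2+\varepsilon}Q_2^{1/2}+Q^{7/4+\varepsilon}Q_2^{3/4}$.

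In the dual term, first correct the bookkeeping. The product is $L(1/2,F\times\chi)L(1/2,\bar\chi)$, not $L(1/2,(F\boxplus 1)\times\chi)$. The dual coefficients are $A_F(n,1)\bar\chi(n)\chi(m)$. Averaging $\epsilon_\chi^3\epsilon_{\bar\chi}\bar\chi(n)\chi(m)$ gives $q^{-1}\sum_{d}\mu(q/d)\phi(d)S(\bar m,\pm n;d)$, not $\sum_x e((x+\bar x\,\overline{nm})/q)$.

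More seriously, the bilinear Cauchy--Schwarz/Poisson device only works when $m$ is long. In the paper this device is: Poisson in the $\mathrm{GL}_1$ variable $m$, producing $\mathrm{KL}_3(\mp\bar h mn;q)$; gluing $l=mn$; Cauchy--Schwarz with $q_2$ and the small divisors outside; Poisson in $l$; and Bombieri--Sperber for the resulting three-variable complete sums. The zero frequency costs $Q^{2}(Q_2N/(QM))^{1/2}$, which forces $M\gg Q^{\delta}Q_2^{3/2}$. The nonzero frequencies give $Q^{2}Q_2^{-1/4}$, so that term comes from the generic frequencies, not degenerate ones.

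For short $m$ your plan has no source of cancellation. Voronoi in $n$ just moves you back across the $\mathrm{GL}_3$ functional equation. The paper handles this range spectrally, via Kuznetsov in the form of the Deshouillers--Iwaniec bound for sums of $S(m\bar r,\pm n;sc)$ over the moduli. This uses the Chandee--Li--Matom\"aki--Radziwi\l\l{} refinement in the exceptional spectrum, with $\theta=7/64$. It is supplemented by a large-sieve bound when the auxiliary divisors exceed a parameter $D_0$, and together these give $Q^{7/4+\delta/2}Q_2^{5/8}$.

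Consequently the exponents do not arise where you say. The term $Q^{15/8}Q_2^{9/16}$ comes from choosing $Q^{\delta}=Q^{1/4}Q_2^{-1/8}$. This balances $Q^{2-\delta/2}Q_2^{1/2}$ from the first-term off-diagonal against $Q^{7/4+\delta/2}Q_2^{5/8}$ from the short-$m$ dual range.
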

	Theorem \ref{MainThm} gives an asymptotic formula for the family of $L$-values when $Q^\varepsilon \ll Q_2\ll Q^{2/9-\varepsilon}$. The case for odd primitive characters can be obtained in the same way. 
As a consequence, we see that for a fixed $\mathrm{SL}_3(\mathbb Z)$ Hecke--Maass cusp form $F$, there are infinitely many primitive Dirichlet characters $\chi$ such that $L(1/2, F\times \chi)$ and $L(1/2, \chi)$ do not vanish simultaneously. 
With more work, it is plausible to establish simultaneous extreme values of $L(1/2, F\times \chi)$ and $L(1/2, \chi)$ using the resonance method developed by Soundararajan \cite{SoundExtreme} and the recent modifications by Heap--Li \cite{HeapLi}. 

We mention that the complex conjugation of the 
$L$-values in Theorem \ref{MainThm} leads to simpler expressions for the root numbers and is ultimately responsible for the power-saving error term, compared with the logarithmic-saving obtained in \cite{RY}. Similar phenomenon occurs when $F$ is replaced by a cusp form for $\mathrm{SL}_2(\mathbb Z)$ from work of Liu \cite{Liu} and Das--Khan \cite{DasKhan}. 

To prove a positive proportion of non-vanishing for the family considered in Theorem \ref{MainThm}, one still needs to obtain an asymptotic formula with a power-saving error term for the second moment of the product of the $L$-values, which continues to be an intriguing problem. This question may be compared with the eighth moment of Dirichlet $L$-functions where a logarithmic-saving has only recently been obtained by Chandee--Li--Matom{\"a}ki--Radziwi{\l}{\l} \cite{CLMR8th} with an additional $t$-averaging. 
If $F$ is replaced by a cusp form for $\mathrm{SL}_2( \mathbb Z)$, the asymptotic formula with a power-saving error term is available from the recent work of Chen--Lin \cite{ChenLin}, building on ideas from Chandee--Li--Matom{\"a}ki--Radziwi{\l}{\l} \cite{CLMR6th} and Iwaniec \cite{IwaniecEigen} (see also \cite{Luo}). 

\vspace{2ex}
\textbf{Notation.} 
We write $\ell \sim L$ to mean $L \le \ell \le 2L$.  We use $\varepsilon$ to denote an arbitrary small positive number which may differ from line to line. 

\section{Proof sketch}
We give a sketch of the proof which uses a combination of analytic, automorphic and algebraic techniques, in particular the asymptotic large sieve, the spectral theory of automorphic forms and exponential sum estimates from algebraic geometry. 

We begin with an unbalanced functional equation for $L(1/2, F\times \chi)$ to write \begin{equation}\label{GL3AFE}
	L(1/2, F\times \chi)\approx\sum_{n\leq q^{2-\delta}}\frac{A_F(1,n)\chi(n)}{\sqrt{n}}+\epsilon_\chi^3\sum_{n\leq q^{1+\delta}}\frac{A_F(n,1)\bar{\chi}(n)}{\sqrt{n}},
\end{equation}
for some small $\delta>0$ to be chosen later. Here $\epsilon_\chi$ is a normalized Gauss sum. Then we have
\begin{align}
\sum_{q\in \mathcal Q}\,\sumflat_{\chi \bmod q}L(1/2, F\times \chi)\overline{L(1/2, \chi)}=:P_1+P_2
\end{align}
where $P_1$ and $P_2$ correspond to the two terms in \eqref{GL3AFE}. 

For $P_1$, we write $\overline{L(1/2, \chi)} \approx \sum_{m\leq q}\frac{\bar{\chi}(m)}{\sqrt{m}}$ so that 
\begin{align}
P_1 \approx \sum_{q\in \mathcal Q}q \sum_{m\leq q}\frac{1}{\sqrt{m}}\sum_{\substack{n\leq q^{2-\delta}\\ n\equiv m \bmod q}}\frac{A_F(1,n)}{\sqrt{n}}.
\end{align}
Since both $m$ and $n$ are small, the complementary divisor $hq_2 := \frac{m\pm n}{q_1} \ll Q^{1-\delta} Q_2$, so it makes sense to switch from congruence conditions mod $q$ to congruence conditions mod $hq_2$ (\`{a} la  \emph{asymptotic large sieve} in \cite{CIS}) provided that $Q_2 < Q^{\delta - \varepsilon}$, say.  Then $P_1$ can be estimated by the large sieve inequalities for Dirichlet characters together with functional equations. 

For $P_2$, we write $\overline{L(1/2, \chi)} 
\approx {\epsilon}_{\bar{\chi}}\sum_{m\leq q}\frac{\chi(m)}{\sqrt{m}}$.  Noting that morally $ \epsilon_\chi^3 {\epsilon}_{\bar{\chi}}  = \epsilon_\chi^2$, and averaging over the character gives rise to Kloosterman sums, we have

\begin{align}
P_2 \approx \sum_{q\in \mathcal Q}\sum_{m\leq q}\frac{1}{\sqrt{m}}\sum_{n\leq q^{1+\delta}}\frac{A_F(n,1)}{\sqrt{n}}S(\bar{m}, \pm n; q).
\end{align} 
Using the spectral theory of automorphic forms (following Deshouillers--Iwaniec), we see that the sum of Kloosterman sums over $q_1$ becomes morally a sum over $GL(2)$ forms of level $m q_2$, so intuitively we can treat the range when $m\leq q^{c_1}$ for some $c_1<1-\delta$, since that results in a reduction in conductor.  

So far, the factorization $q = q_1 q_2$ has only made our estimates worse.  This factorization becomes crucial when dealing with the remaining range $m\sim M$ where $q^{c_1}<M\leq q$. We apply Poisson summation in $m$ which results in the illustrative range
\begin{align}\label{kl3bilinear}
\sqrt{Q} \sum_{q\in \mathcal Q}\sum_{n\sim q^{1+\delta}}\frac{A_F(n,1)}{\sqrt{n}}\sum_{m\sim q/M}\frac{1}{\sqrt{m}}\operatorname{Kl}_3(\pm mn;q),
\end{align}
where $\operatorname{Kl}_3(\pm mn;q)$ is the normalized hyper-Kloosterman sum, which is generically $O(q^\varepsilon)$.  Next we glue $m$ and $n$ together and apply Cauchy--Schwarz using the factorization of $q$ to see that the above is essentially
\begin{align}
	&Q^{1/2}\sum_{q_2\sim Q_2}\sum_{l \ll q^{2+\delta}/M}\frac{\alpha(l)}{\sqrt{l}}\sum_{q_1\sim Q_1}\operatorname{KL}_3(\pm l;q_1q_2)\\
	& \ll Q^{1/2+\varepsilon} \sum_{q_2\sim Q_2} \Big(\sum_{l \ll Q^{2+\delta }/M} \Big|\sum_{q_1\sim Q_1} \operatorname{KL}_3(\pm l; q_2q_1)\Big|^2 \Big)^{1/2},
\end{align}
where $\alpha(l) = \sum_{\substack{l=mn\\ n \sim q^{1+\delta}\\ m\sim q/M}} A_F(n,1)$.
%
Expanding the square, the quantity is  
\begin{equation*}
	\ll Q^{1/2+\varepsilon}\sum_{q_2\sim Q_2} \Big(\sum_{q_1, q_1'\sim Q_1}\sum_{l \ll Q^{2+\delta}/M}\operatorname{KL}_3(l, q_2q_1)\overline{\operatorname{KL}_3(l, q_2q_1')}\Big)^{1/2}
\end{equation*}
The diagonal term $q_1=q_1'$ contributes at most
\begin{align}
	Q^{1/2+\varepsilon}Q_2 (\frac {Q^{2+\delta}}{M} Q_1)^{1/2}\ll Q^{2+\varepsilon}(\frac{Q_2Q^\delta}{M})^{1/2}
\end{align}
which is acceptable if 
\begin{equation}\label{eqn:MQ_2bdd}
Q_2< Q^{c_1-\delta-2\varepsilon}.	
\end{equation}
 
For the off-diagonal terms $q_1\not=q_1'$, we apply Poisson summation to $l\ll L= q^{2+\delta}/M$ to obtain 
\begin{align}
	\sum_{l\ll L}\operatorname{KL}_3(l, q_2q_1)\overline{\operatorname{KL}_3(l, q_2q_1')}
	&= \frac{L}{q_2q_1q_1'}\sum_{l\ll q_2q_1q_1'/L}\sum_{\beta\bmod {q_2q_1q_1'}}\operatorname{KL}_3(\beta, q_2q_1)\overline{\operatorname{KL}_3(\beta, q_2q_1')} e\bfrac{\beta l}{q_1q_1' q_2}.
\end{align}
Note that $q_1q_1' q_2/L \gg 1$ by \eqref{eqn:MQ_2bdd}.  Assuming square-root cancellation in the $\beta$-sum, we obtain that the off-diagonal contribution is bounded by 
\begin{align}
	Q^{1/2+\varepsilon}Q_2 (Q_1^2 (Q_2Q_1^2)^{1/2})^{1/2}=Q^{2+\varepsilon}Q_2^{-1/4}
\end{align}
which is acceptable as long as $Q_2> Q^{4\varepsilon}$. 

%
%
%
As long as $\delta < c_1< 1-\delta$, we have some room for $Q_2.$
In practice, one can chose $\delta$ more precisely to minimize the error term.  We choose our $\mathcal Q$ to consist of squarefree integers so that there is no multiplicity in the sum over $q\in \mathcal Q$. The squarefree restriction and factorization property of $q$ introduce additional technical complications into the application of asymptotic large sieve and the spectral theory of automorphic forms. The exponential sum estimates are reduced to the prime moduli case where algebraic geometry can be applied and we appeal to the work of Bombieri--Sperber \cite{BS}, but gluing the variables before Cauchy--Schwarz requires further care. 

The rest of the paper is organized as follows. In Section \S \ref{sec:setup}, we prepare the set up of the problem and reduce the proof of Theorem \ref{MainThm} to Proposition  \ref{prop:P1} for $P_1$ and Proposition \ref{prop:P2} for $P_2$. After introducing various large sieve inequalities and a key Lemma \ref{lem:truncationcalc} in Section \S\ref{sec:prelim}, we evaluate the main term in $P_1$ in Section \S \ref{sec:P1} and prove Proposition \ref{prop:P1}, assuming Proposition \ref{prop:G} which handles the error terms and is proved in Section \S \ref{sec:G}. Finally Proposition \S \ref{prop:P2} is proved in Section \S \ref{sec:P2}. 

\section{Set up and key propositions}\label{sec:setup}
From \cite[eq (7.1.17) and Theorem 6.5.15]{G}, we have the approximate functional equation for $L(1/2, F\times \chi)$: 
\begin{align}\label{GL3AFE2}
L(1/2, F\times\chi)&=\sum_{n\geq 1}\frac{A_F(1,n)\chi(n)}{\sqrt{n}}V\Big(\frac{n}{q^{2-\delta}}\Big)+\epsilon_\chi^3\sum_{n\geq 1}\frac{A_F(n,1)\bar{\chi}(n)}{\sqrt{n}}V\Big(\frac{n}{q^{1+\delta}}\Big),
\end{align}
where 
\begin{align}\label{eqn:Vmellin}
V(y)= \frac{1}{2\pi i}\int_{(3)}y^{-s}G(s)\frac{\gamma(s+\mathfrak a+1/2)}{\gamma(1/2+\mathfrak a)}\frac{ds}{s}, \quad  \epsilon_\chi=\frac{\tau(\chi)}{i^\mathfrak a\sqrt{q}},\quad  \mathfrak a=\begin{cases}
0 & \chi(-1)=1,\\
1 & \chi(-1)=-1.
\end{cases}
\end{align}
for some entire function $G$ with exponential decay as $|s|\rightarrow\infty$ and $\gamma(s)=\prod_{i=1}^3\pi^{-s/2}\Gamma(\frac{s-\alpha_i}{2})$. 
We also have 
\begin{align}\label{GL1AFE}
\overline{L(1/2, \chi)}=L(1/2, \bar{\chi})=\sum_{m\geq 1}\frac{\bar\chi(m)}{\sqrt{m}}W\Big(\frac{m}{X\sqrt{q}}\Big)+\epsilon_{\bar\chi} \sum_{m\geq 1}\frac{\chi(m)}{\sqrt{m}}W\Big(\frac{mX}{\sqrt{q}}\Big) 
\end{align}
where 
\begin{align}\label{eqn:Wmellin}
W(y)=\frac{1}{2\pi i }\int_{(3)}y^{-s}G(s)\frac{\Gamma_\RR(s+\mathfrak a+1/2)}{\Gamma_\RR(1/2+\mathfrak a)}\frac{ds}{s}	
\end{align}
and $X>0$ is some parameter.
Further, \cite[Proposition 5.4]{iw-kow} and \cite[Lemma 1]{CIS} gives that
\begin{align}\label{eqn:VWbdd}
y^k W^{(k)}(y), y^k V^{(k)}(y) &=\delta_k + O_{k, A}( y^{1/21}) , y\to 0\notag\\
y^k W^{(k)}(y)\ll \exp(-cy), &\quad y^k V^{(k)}(y) \ll_{k}\ll \exp(-cy^{1/3})
\end{align}for all $A >0$, and all integers $k\ge 0$, and where 
\begin{equation}
\delta_k = 
\begin{cases}
1 \textup{ if } k=0\\
0 \textup{ otherwise.}
\end{cases}
\end{equation}
\begin{lemma}[Orthogonality of characters]\label{orthchar}
	We have for $(mn, q)=1$
	\begin{align}
	\sumflat_{\chi \bmod q}\chi(n)\bar{\chi}(m)=\frac{1}{2}\sum_{\pm}\sum_{d\mid (q, n\pm m)}\phi(d)\mu(q/d).
	\end{align}
\end{lemma}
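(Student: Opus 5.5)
The plan is to reduce the sum over even primitive characters to the standard orthogonality for all primitive characters, and then evaluate that by Möbius inversion. Since $\chi$ is even exactly when $\chi(-1)=1$, the projector onto even characters is $\frac12(1+\chi(-1))$. Hence
\begin{align}
\sumflat_{\chi \bmod q}\chi(n)\bar\chi(m)=\frac12\sumstar_{\chi\bmod q}\bigl(\chi(n)\bar\chi(m)+\chi(-n)\bar\chi(m)\bigr),
\end{align}
where $\sumstar$ denotes the sum over all primitive characters modulo $q$. The claim then reduces to showing, for $(a,q)=1$ with $a=n\bar m$ or $a=-n\bar m$ modulo $q$, that
\begin{align}
\sumstar_{\chi \bmod q}\chi(a)=\sum_{d\mid (q,a-1)}\phi(d)\mu(q/d).
\end{align}
Note that $d\mid a-1$ with $a\equiv \pm n\bar m$ is equivalent to $d\mid n\mp m$ when $(m,q)=1$, so the sign convention in the stated formula comes out right after relabeling $\pm$.

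For this primitive orthogonality I would use the fact that every character modulo $q$ is induced by a unique primitive character modulo some $d\mid q$. Set $f(q):=\sum_{\chi \bmod q}\chi(a)$ and $g(d):=\sumstar_{\chi\bmod d}\chi(a)$. Because $(a,q)=1$, the induced character takes the same value at $a$ as the primitive one, so $f(q)=\sum_{d\mid q} g(d)$. Möbius inversion then gives
\begin{align}
g(q)=\sum_{d\mid q}\mu(q/d)f(d).
\end{align}
Full orthogonality gives $f(d)=\phi(d)\mathbf 1_{a\equiv 1 \bmod d}$, and substituting this yields the formula.

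There is no real obstacle here. The only points needing care are checking that coprimality, which is needed for the induced character to agree with the primitive one at $a$, holds at every level $d\mid q$, and keeping the $\pm$ bookkeeping consistent. The squarefree hypothesis on $q$ is not needed for this lemma.
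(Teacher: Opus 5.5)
Your argument is correct: inserting the even projector $\frac12(1+\chi(-1))$ and then applying Möbius inversion to $\sum_{\chi \bmod e}\chi(a)=\sum_{d\mid e}\sumstar_{\chi \bmod d}\chi(a)$ over the divisors $e\mid q$ gives exactly the stated formula, and your $\pm$ relabeling and coprimality checks are right. The paper does not reprove the lemma but cites \cite[Lemma 2]{CIS}, which rests on this same standard argument; the paper itself uses the same projector and primitive-orthogonality identity in the proof of Lemma \ref{epschi}.
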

\begin{proof}See \cite[Lemma 2]{CIS}.
\end{proof}

\begin{lemma}\label{epschi}
	For $(mn,q)=1$, we have 
	\begin{align}
	\sumflat_{\chi \bmod q} \bar{\chi}(n)\chi(m)\epsilon_\chi^3 \epsilon_{\bar{\chi}}=\frac{1}{2q}\sum_{\pm}\sum_{\substack{d\mid q\\ (d, q/d)=1}}\mu(q/d)\phi(d)S( \bar{m},\pm n, d) .
	\end{align}
\end{lemma}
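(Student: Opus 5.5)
We need to prove Lemma \ref{epschi}, the formula
\[
\sumflat_{\chi \bmod q}\bar\chi(n)\chi(m)\epsilon_\chi^3\epsilon_{\bar\chi}
=\frac{1}{2q}\sum_{\pm}\sum_{\substack{d\mid q\\ (d,q/d)=1}}\mu(q/d)\phi(d)\,S(\bar m,\pm n;d).
\]

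\emph{Step 1: simplify the root numbers.} For even characters $\mathfrak a=0$, so $\epsilon_\chi=\tau(\chi)/\sqrt q$. For $\chi$ primitive we have $\tau(\chi)\tau(\bar\chi)=\chi(-1)q=q$, hence
\[
\epsilon_\chi^3\epsilon_{\bar\chi}=\frac{\tau(\chi)^2}{q}.
\]
Open the square as a double sum over $a,b \bmod q$:
\[
\tau(\chi)^2=\sum_{a,b\bmod q}\chi(ab)\,e\!\left(\frac{a+b}{q}\right).
\]
Then $\chi(ab)\chi(m)\bar\chi(n)=\chi(ab\,m\bar n)$, and the left side of the lemma becomes
\[
\frac1q\sum_{a,b\bmod q}e\!\left(\frac{a+b}{q}\right)\sumflat_{\chi\bmod q}\chi(abm\bar n).
\]

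\emph{Step 2: detect even primitive characters.} Use the Möbius inversion that underlies Lemma \ref{orthchar}:
\[
\sumflat_{\chi\bmod q}\chi(x)=\frac12\sum_{\pm}\sum_{d\mid (q,\,x\mp1)}\phi(d)\mu(q/d)\qquad\text{for }(x,q)=1.
\]
This is Lemma \ref{orthchar} with $n=x$, $m=1$. Apply it with $x=abm\bar n$. The condition $d\mid x\mp1$ is equivalent to $ab\equiv \pm n\bar m \pmod d$. So for each $d\mid q$ and each sign we must evaluate
\[
\sum_{\substack{a,b\bmod q\\ (ab,q)=1\\ ab\equiv \pm n\bar m\,(d)}}e\!\left(\frac{a+b}{q}\right).
\]

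\emph{Step 3: factor the inner sum.} Since $q$ is squarefree in our application, $(d,q/d)=1$ holds automatically. In general, the terms with $(d,q/d)>1$ are where care is needed; I expect that they vanish, and this is the step I anticipate to be the main obstacle if $q$ is not assumed squarefree.

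Assume $(d,q/d)=1$ and write $q=dr$. By the Chinese remainder theorem, split $a\bmod q$ into a component mod $d$ and a component mod $r$, and likewise for $b$. The additive character $e((a+b)/q)$ factors into $e(\bar r(a+b)/d)\,e(\bar d(a+b)/r)$. The congruence condition involves only the mod $d$ components.

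The mod $r$ part is then unconstrained apart from coprimality, so it equals $c_r(\bar d)^2=\mu(r)^2$. In the squarefree case this is $1$; in general it is $\mu(r)^2$, and multiplying by the factor $\mu(r)$ in front leaves $\mu(r)$.

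For the mod $d$ part, substitute $a\mapsto ra$ and $b\mapsto rb$. The constraint becomes $ab\equiv \pm n\bar m\,\bar r^{\,2}\pmod d$, and the part equals
\[
\sum_{a\bmod d}{}^{*}\,e\!\left(\frac{a\pm n\bar m\bar r^{2}\bar a}{d}\right)=S(1,\pm n\bar m\bar r^2;d).
\]
Changing variables once more gives $S(1,\pm n\bar m\bar r^2;d)=S(\bar m,\pm n\bar r^{\,2};d)$.

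\emph{Step 4: remove $\bar r^{\,2}$ and assemble.} The leftover $\bar r^{\,2}$ must be absorbed. Using $S(xu,y\bar u;d)=S(x,y;d)$ for $(u,d)=1$, I would instead redo the change of variables as $a\mapsto r a$ together with $b\mapsto r b$ chosen so that the phase reads $e((a+b)\bar r/d)$ directly. Rescaling then gives $S(\bar r\,\cdot,\ \bar r\,\cdot)$, and the two factors of $\bar r$ cancel by this identity, leaving $S(\bar m,\pm n;d)$. I would check this bookkeeping carefully, since it is the only delicate point; the paper's stated form, with no $r$-dependence, indicates that the factors cancel.

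Collecting the factor $\tfrac{1}{2q}$, the weight $\phi(d)\mu(q/d)$, and the Kloosterman sum then yields the lemma.
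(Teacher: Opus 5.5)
Your Steps 1--3 are correct. Step 4 is where the argument breaks, and it cannot be repaired, because the printed identity is false. The invariance $S(xu,y\bar u;d)=S(x,y;d)$ only preserves the product of the two entries. Scaling both entries by the same unit therefore gives $S(\bar r x,\bar r y;d)=S(x,\bar r^{\,2}y;d)$: the two factors of $\bar r$ multiply rather than cancel.

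Your Step 3 output $S(\bar m,\pm n\,\bar r^{\,2};d)$, with $r=q/d$, is in fact the correct $d$-term. Here is a concrete check with $q=21$ and $m=n=1$. The even primitive characters are $\chi_3\chi_7$ with $\chi_7$ odd, and $\tau(\chi_3\chi_7)^2=\chi_7(9)\tau(\chi_3)^2\tau(\chi_7)^2$. From this the left side equals $-\tfrac37\bigl(S(1,4;7)-S(1,3;7)\bigr)\approx 0.466$, while the printed right side is $\approx -0.599$. The whole discrepancy sits in the $d=7$ term. There $(q/d)^2=9\equiv 2\pmod 7$, and $S(1,1;7)+S(1,-1;7)\approx 3.16$, whereas $S(1,4;7)+S(1,-4;7)\approx -4.30$.

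The correct statement is
\[
\sumflat_{\chi \bmod q}\bar\chi(n)\chi(m)\epsilon_\chi^3\epsilon_{\bar\chi}=\frac{1}{2q}\sum_{\pm}\sum_{\substack{d\mid q\\ (d,q/d)=1}}\mu(q/d)\phi(d)\,S\bigl(\bar m,\pm n\,\overline{(q/d)^2};d\bigr).
\]
Your Steps 1--3 prove exactly this, once you replace the hand-wave about $(d,q/d)>1$ by the easy argument. Suppose $p^j\| d$ and $p^k\| q$ with $1\le j<k$. Then for fixed $a$, the $b$-sum over a residue class mod $d$ has $p$-component $e(\cdot)\sum_{y \bmod p^{k-j}}e(cy/p^{k-j})=0$, with $p\nmid c$.

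The paper argues differently: it uses the twisted multiplicativity $f(r;q_1q_2)=f(rq_2^2;q_1)f(rq_1^2;q_2)$ and then evaluates $f$ at prime powers. Recombining those factors via the twisted multiplicativity of Kloosterman sums actually yields $S(\overline{r(q/d)^2},-1;d)$. The last line of the paper's proof silently drops the $(q/d)^2$, which is the same slip your Step 4 tries to reproduce. Your direct CRT evaluation is the cleaner route to the corrected identity. Downstream, in $P_2$, the Kloosterman sums acquire the twist $\overline{d'^{\,2}}$, and it has to be carried along.
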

\begin{proof}
We write
\begin{align}
	\sumflat_{\chi \bmod q}\chi(m)\bar{\chi}(n)\epsilon_\chi^3\epsilon_{\bar{\chi}}=\sumstar_{\chi \bmod q}\frac{1}{2}(1+\chi(-1))\chi(m)\bar{\chi}(n)\epsilon_\chi^3\epsilon_{\bar{\chi}}=\frac{1}{2}\sum_{\pm}f(\pm m\bar{n};q)
\end{align}
where 
\begin{align}
f(r;q)=\frac{1}{q}\sideset{\ }{^*}{\sum}_{\chi \bmod q}\chi(r)\tau(\chi)^2\chi(-1).
\end{align}
Note that if $(q_1,q_2)=1$, then $f(r;q_1q_2)=f(rq_2^2;q_1)f(rq_1^2;q_2)$ since
\begin{align}
f(r;q_1q_2)&=\frac{1}{q_1q_2}\sum_{\chi_1\bmod {q_1}}^*\sum_{\chi_2\bmod {q_2}}^*\chi_1\chi_2(r)\tau(\chi_1\chi_2)^2\chi_1\chi_2(-1)\\
&=\frac{1}{q_1q_2}\sum_{\chi_1\bmod {q_1}}^*\sum_{\chi_2\bmod {q_2}}^*\chi_1\chi_2(r)\chi_1(q_2^2)\chi_2(q_1^2)\tau(\chi_1)^2\tau(\chi_2)^2\chi_1\chi_2(-1)\\
&=f(rq_2^2;q_1)f(rq_1^2;q_2).
\end{align}
The evaluation of $f$ over prime power moduli is 
\begin{align}
f(r;p^k)&=\frac{1}{p^k}\sum_{s, t\bmod {p^k}}^* e_{p^k}(s-t)\sum_{\chi\bmod {p^k}}^* \chi(rst)\\
&=\frac{1}{p^k}\sum_{s,t \bmod {p^k}}^*e_{p^k}(s-t)\sum_{d\mid {(p^k, rst-1)}}\phi(d)\mu(p^k/d)\\
&=\frac{1}{p^k}\sum_{d\mid {p^k}}\phi(d)\mu(p^k/d)\sum_{\substack{s,t \bmod{p^k}\\ rst\equiv 1 \bmod {d}}}^* e_{p^k}(s-t)\\
&=\frac{1}{p^k}\sum_{d\mid {p^k}}\phi(d)\mu(p^k/d)\sum_{s\bmod{p^k}}^*\big(\sum_{x \bmod {p^k/d}}e_{p^k}(s-(1+xd)\overline{sr})-e_{p^k}(s)\delta(d=1)\big)\\
&=\frac{1}{p^k}\phi(p^k)\sum_{s\bmod {p^k}}^*e_{p^k}(s-\overline{sr})-\frac{1}{p^k}\mu(p^k)\sum_{s\bmod p^k}^*e_{p^k}(s)\\
&=\frac{1}{p^k}\sum_{\substack{d\mid p^k \\ (d, p^k/d)=1}}\mu(p^k/d)\phi(d)S(\bar{r},-1, d).
\end{align}
Thus 
\begin{align}
	f(r;q)=\frac{1}{q}\sum_{\substack{d\mid q\\ (d, q/d)=1}}\mu\big(\frac{q}{d}\big)\phi(d)S(\bar{r},-1,d)
\end{align}
which completes the proof. 
\end{proof}

Using \eqref{GL3AFE2} and \eqref{GL1AFE} with $X=q^{1/2+\eta}$ or $X=q^{-1/2-\eta}$ together with Lemma \ref{orthchar} and Lemma \ref{epschi}, we obtain 
\begin{align}
\sum_{q\in \mathcal Q}\Psi\Big(\frac{q}{Q}\Big) \sumflat_{\chi \bmod q}L(1/2, F\times \chi)L(1/2, \bar{\chi})=P_1+P_2+O\left(q^{-A}\right),
\end{align}
where 
\begin{equation}\label{P1def}
\begin{split}
P_1&=\frac{1}{2}\sum_{q\in \mathcal Q}\Psi\Big(\frac{q}{Q}\Big) \sum_{r\mid q}\phi(r)\mu(\frac{q}{r})\sum_{(m,q)=1}\frac{1}{\sqrt{m}}W\Big(\frac{m}{{q}^{1+\eta}}\Big)\sum_{\pm}\sum_{\substack{n\equiv \pm m \bmod r\\ (n, q)=1}}\frac{A_F(1,n)}{\sqrt{n}}V\Big(\frac{n}{q^{2-\delta}}\Big)\\
&=\frac{1}{2}\sum_{\substack{r, d\\ rd\in \mathcal Q}}\Psi\Big(\frac{rd}{Q}\Big) \mu(d)\phi(r)\sum_{(m, rd)=1}\frac{1}{\sqrt{m}}W\Big(\frac{m}{q^{1+\eta}}\Big)\sum_{\pm}\sum_{\substack{n\equiv \pm m\bmod r\\ (n, rd)=1}}\frac{A_F(1,n)}{\sqrt{n}}V\Big(\frac{n}{(rd)^{2-\delta}}\Big),
\end{split}
\end{equation}
and
\begin{align}\label{P2def}P_2&=\frac{1}{2}\sum_{q\in \mathcal Q}\Psi\Big(\frac{q}{Q}\Big) \sum_{(m,q)=1}\frac{1}{\sqrt{m}}W\Big(\frac{m}{q^{1+\eta}}\Big)\sum_{(n, q)=1}\frac{A_F(n,1)}{\sqrt{n}}V\Big(\frac{n}{q^{1+\delta}}\Big)\\&\quad \quad \times \frac{1}{q}\sum_{\pm}\sum_{\substack{d\mid q\\ (d,q/d)=1}}\mu\big(\frac{q}{d}\big)\phi(d)S(\bar{m},\pm n; d).
\end{align}

Our main Theorem \ref{MainThm} now follows from the following Propositions.

\begin{prop}\label{prop:P1}
	We have for any $\varepsilon>0$ that
	$$P_1 =\sum_{q\in \mathcal Q}\Psi\Big(\frac{q}{Q}\Big)\phi^{\flat}(q)L(1, F\times \chi_0)+O(Q^{2-\delta/2+\varepsilon}Q_2^{1/2} + Q^{7/4+\varepsilon} Q_2^{3/4}).
	$$
\end{prop}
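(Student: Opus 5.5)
We prove Proposition \ref{prop:P1} by separating the diagonal $n=m$ from the rest and applying the asymptotic large sieve to the rest. Start from the second expression for $P_1$ in \eqref{P1def}. For fixed $q=rd$ and $m$, split the congruence $n\equiv \pm m \bmod r$ into the diagonal $n=m$ (only the $+$ sign, since $n,m>0$) and the off-diagonal $n\ne m$.

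\emph{Diagonal term.} It equals
\[
\frac12\sum_{q\in\mathcal Q}\Psi(q/Q)\sum_{r\mid q}\phi(r)\mu(q/r)\sum_{(m,q)=1}\frac{A_F(1,m)}{m}\,W\Big(\frac{m}{q^{1+\eta}}\Big)V\Big(\frac{m}{q^{2-\delta}}\Big).
\]
The sum over $r$ gives $\phi^\flat(q)$ up to the factor $\frac12$ bookkeeping. Using the Mellin representations \eqref{eqn:Vmellin} and \eqref{eqn:Wmellin}, the $m$-sum becomes a double contour integral against $L^{(q)}(1+s+w,F\times\chi_0)$. Shifting both contours to the left of $0$ picks up the residue $L(1,F\times\chi_0)$; the Euler factors at $p\mid q$ are harmless. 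This yields the main term with error $O(Q^{-A})$ up to the truncations, and the remaining integrals are small since $q^{1+\eta}$ and $q^{2-\delta}$ are powers of $Q$.

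\emph{Off-diagonal terms.} Here $n=\pm m+r h$ with $h\neq 0$. Write $\phi(r)=\sum_{ab=r}\mu(a)b$ (or keep $\phi(r)$ as a smooth-ish weight) and parametrize $r=q_1' \cdot(\text{part of } q_2)$, using squarefreeness of $q$. Since $m\ll Q^{1+\eta}$ and $n\ll Q^{2-\delta}$, the complementary divisor $g=(n\mp m)/r$ has size $\ll Q^{1-\delta}Q_2$ (up to the $d$-part). We then switch the congruence: $r\mid n\mp m$ becomes $n\equiv\pm m\bmod g$, with $r=(n\mp m)/g$ now determined. We detect this by characters modulo $g$.

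The principal character modulo $g$ yields a secondary main term. We expect it to cancel or be absorbed after inserting the Mellin transforms and evaluating, and we show this is $O(Q^{2-\delta/2+\varepsilon}Q_2^{1/2})$ via the Mellin shift, following \cite{CIS}. The nonprincipal characters give sums
\[
\sum_{g}\frac{1}{\phi(g)}\sum_{\psi\neq\psi_0 \bmod g}\Big(\sum_m \frac{\bar\psi(m)}{\sqrt m}\cdots\Big)\Big(\sum_n\frac{A_F(1,n)\psi(n)}{\sqrt n}\cdots\Big),
\]
where the weight $\phi(r)\Psi(rd/Q)$ is separated from $m,n$ by Mellin inversion. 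Reducing to primitive characters, we apply functional equations to shorten the long sums and then the large sieve inequality for Dirichlet characters; for the $\mathrm{GL}_3$ sum we additionally use Rankin--Selberg bounds $\sum_{n\le N}|A_F(1,n)|^2\ll N$. Balancing gives $Q^{2-\delta/2+\varepsilon}Q_2^{1/2}$ from the generic range. The term $Q^{7/4+\varepsilon}Q_2^{3/4}$ comes from the large sieve applied after a dyadic split when $g$ is small relative to the lengths, or from a Cauchy--Schwarz between the two factors.

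\emph{Main obstacle.} The hardest step is the switch of moduli combined with the constraints $rd\in\mathcal Q$, namely squarefreeness, $(n,rd)=1$ and the factorization $q_1q_2$ with the size ranges. These must be expressed through $g$ and separated from $m,n$ without losing the saving. We would handle them by Möbius inversion for $\mu^2$ and coprimality, truncating the Möbius variables at a small power of $Q$. The tails are bounded trivially, using that $Q_2$ is small.
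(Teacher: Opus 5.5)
\textbf{Verdict: there is a genuine gap, in the off-diagonal term.} Your framework (divisor switching as in \cite{CIS}, characters modulo the new modulus, large sieve) is the one the paper uses. Your diagonal treatment also matches the paper's, with one caveat. You cannot get $O(Q^{-A})$ there, because the Euler factors of $L^{(q)}(1+s+z,F)$ at $p\mid q$ grow once $\mathrm{Re}(1+s+z)$ gets small. Shifting to $\mathrm{Re}(s)=-\tfrac12+\varepsilon_0$ and then $\mathrm{Re}(z)=-\tfrac12+\varepsilon_0$ gives a total error $O(Q^{3/2+\varepsilon})$, which is still enough.

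\textbf{The large-$d$ range is not handled.} This is the range you wave away as ``up to the $d$-part''. After switching, the new modulus has size about $|n\mp m|/r\asymp Q^{1-\delta}d$, times the part $r_2\mid q_2$ that must stay in the modulus. Here $d=q/r$ runs up to $Q$. So the large-sieve bound for the switched sum grows linearly in $d$: the paper gets $Q^{2-\delta}Q_2DC+Q^{7/4-3\delta/4}Q_2^{3/2}DC^{3/2}A^{1/2}$ for $d\sim D$, $c\sim C$, $a\sim A$. Divisor switching is therefore only usable when $adc\le R$.
\begin{itemize}
\item The complementary range cannot be bounded trivially. For fixed $d$ a trivial estimate is roughly $(Q/d)\,Q^{3/2-\delta/2}$, which summed over $d>R$ is of size $Q^{5/2-\delta/2}$, larger than the main term. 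The smallness of $Q_2$ does not help.
\item What you are missing is the paper's Lemma \ref{lem:truncationcalc}. For large $adc$ the \emph{original} modulus $k=ac^2\ell' r_2\le Q/d$ is small. So you expand $n\equiv\pm m \bmod k$ in characters modulo $k$ directly, without switching, and apply functional equations and the large sieve with divisibility. This gives $\mathcal S\ll Q^{2+\varepsilon}/R+Q^{7/4+\varepsilon}Q_2^{1/2}$.
\end{itemize}

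\textbf{Where the error term comes from.} This two-range structure is what produces the stated error term, which your ``balancing'' does not actually derive.
\begin{itemize}
\item $Q^{2-\delta/2}Q_2^{1/2}$ is the balance of the tail bound $Q^2/R$ against the switched-sum bound $Q^{2-\delta}Q_2R$ at $R=Q^{\delta/2}Q_2^{-1/2}$.
\item $Q^{7/4}Q_2^{3/4}$ is the $\mathrm{GL}_3$ large-sieve term $Q^{7/4-3\delta/4}Q_2^{3/2}R^{3/2}$ at the same $R$.
\item If you truncate at a small power $Q^{\varepsilon_1}$, then even with the correct tail bound you only get $Q^{2-\varepsilon_1+\varepsilon}$.
\end{itemize}

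\textbf{The principal character.} It is not a secondary main term to be cancelled as in \cite{CIS}. There, the cancellation happens between the switched and non-switched pieces, and your plan has no non-switched piece. Here it is small on its own, $O(Q^{3/2+\varepsilon})$, because $F$ is cuspidal. The $N$-sum then gives the entire function $L(\tfrac12+s_2,F)$, so $s_2$ can be moved to $\mathrm{Re}\,s_2=\varepsilon$ freely. You only need to keep $\mathrm{Re}\,s_1=\tfrac12+\varepsilon$ to the right of the $\zeta$-pole from the $M$-sum, which costs $Q^{1/2}$.
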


\begin{prop}\label{prop:P2}
For any $\varepsilon>0$ and $\theta=\frac{7}{64}$ we have
\begin{equation}
	\begin{split}
			P_2(M,N)\ll & Q^\varepsilon \Big(Q^2Q_2^{-1/4}+Q^{7/4+\delta/2}Q_2^{5/8} +Q^{3/2+(2-\delta)\theta}Q_2^{5/4-3\theta/2}\Big).
	\end{split}
\end{equation}

\end{prop}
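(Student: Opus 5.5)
We first decompose $P_2$ dyadically: insert smooth partitions of unity in $m\sim M$ and $n\sim N$, with $M\ll Q^{1+\eta+\varepsilon}$ and $N\ll Q^{1+\delta+\varepsilon}$ (larger ranges are negligible by \eqref{eqn:VWbdd}). This defines the pieces $P_2(M,N)$. In \eqref{P2def} we write $q=dd'$ with $\mu(d')$ and $\phi(d)/q$. Since $q$ is squarefree, the condition $(d,q/d)=1$ is automatic. For the terms with $d'$ large, the Weil bound for $S(\bar m,\pm n;d)$ together with trivial summation suffices. So we reduce to sums of the shape
\[
\frac1{Q}\sum_{q\in\mathcal Q}\Psi\Big(\frac qQ\Big)\sum_{m\sim M}\frac{1}{\sqrt m}\sum_{n\sim N}\frac{A_F(n,1)}{\sqrt n}S(\bar m,\pm n;q)\cdot(\text{weights}),
\]
with $q=q_1q_2$ and $Q\asymp Q_1Q_2$. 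We then split according to whether $M\le Q^{c}$ or $M>Q^{c}$, with the threshold $c$ to be optimized at the end.

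\textbf{Small $M$ (spectral step).} Fix $q_2$ and $m$, and use $S(\bar m,n;q_1q_2)=S(\bar m\bar q_2^2,n;q_1)\,S(\bar m\bar q_1^2,n;q_2)$. Expand the $q_2$-factor in additive characters modulo $q_2$, and detect $\bar m$ modulo $q_1$ by working at level $mq_2$, so that the sum over $q_1\sim Q_1$ becomes a sum of Kloosterman sums with modulus $\equiv 0 \pmod{mq_2}$ (or a cusp pair for $\Gamma_0(mq_2)$). We apply the Kuznetsov formula and the Deshouillers--Iwaniec large sieve for Fourier coefficients, with the $n$-sum weighted by $A_F(n,1)$. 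Its mean square is controlled by Rankin--Selberg, $\sum_{n\le N}|A_F(n,1)|^2\ll N$. The exceptional spectrum contributes a factor governed by $\theta=7/64$. We expect this to produce the term $Q^{3/2+(2-\delta)\theta}Q_2^{5/4-3\theta/2}$, where the loss $(N\cdot\text{level}/Q_1^2)^{\theta}$ yields the $\theta$-exponents. The non-exceptional part should be dominated by the other terms when $M\le Q^c$.

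\textbf{Large $M$ (Poisson plus bilinear step).} Open the Kloosterman sum and apply Poisson summation in $m$ modulo $q$. The dual variable has length $\asymp Q/M$, and the complete sum becomes $\sqrt q\,\mathrm{Kl}_3(\pm mn;q)$ (up to the zero frequency, which is small or cancels), as in \eqref{kl3bilinear}. We then glue $l=mn$ with coefficient $\alpha(l)$, and apply Cauchy--Schwarz with $q_2$ and $l$ outside and $q_1$ inside, using $\sum|\alpha(l)|^2/l\ll Q^\varepsilon$. The gluing needs care because $n$ and $m$ must stay coprime to $q$, so that condition is separated by M\"obius inversion first.

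After Cauchy--Schwarz, the diagonal terms $q_1=q_1'$ give $Q^{2+\varepsilon}(Q_2Q^{\delta}/M)^{1/2}$. For the off-diagonal terms we use Poisson in $l$ modulo $q_2q_1q_1'$, which produces complete sums
\[
\sum_{\beta\bmod{q_2q_1q_1'}}\mathrm{Kl}_3(\beta;q_2q_1)\overline{\mathrm{Kl}_3(\beta;q_2q_1')}\,e\Big(\frac{\beta l}{q_1q_1'q_2}\Big).
\]
By the Chinese remainder theorem these factor into prime-modulus sums. At primes dividing only one of $q_1,q_1'$ we have a Fourier transform of $\mathrm{Kl}_3$, which is a $\mathrm{Kl}_4$-type sum bounded by Deligne. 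At primes dividing $q_2$ we have a correlation sum of two $\mathrm{Kl}_3$ sums with a shift. Here we invoke Bombieri--Sperber to get square-root cancellation except when $l\equiv0$ at $p\mid q_2$; that degenerate case is counted separately and gives a loss of a power of $Q_2$. Combining all of this gives $Q^2Q_2^{-1/4}$. Choosing $M\ge Q^c$ with $c$ balanced against the spectral bound then turns the diagonal into the term $Q^{7/4+\delta/2}Q_2^{5/8}$.

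The main obstacle is the off-diagonal complete exponential sum modulo $q_2q_1q_1'$. This requires handling the multiplicative twists $\bar q_1^{\,3}$, $\bar q_1'^{\,3}$ that appear after CRT, and establishing square-root cancellation uniformly in these parameters, including identifying precisely the degenerate frequencies at primes dividing $q_2$. I would first attempt the reduction to Bombieri--Sperber's bound on the prime-modulus correlation sums, treating the degenerate set by a direct count.
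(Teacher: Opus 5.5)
\textbf{Large $M$.} This branch of your proposal is essentially the paper's argument: Poisson in $m$ to $\mathrm{KL}_3$, gluing $l=mn$, Cauchy--Schwarz with the $q_1$-part of the modulus inside, Poisson in $l$, Bombieri--Sperber for the complete sums, and a trivial bound at primes $p\mid l$. Two small corrections are needed. First, the dual modulus is $q_2[q_1,q_1']$, so primes dividing $(q_1,q_1')$ must be handled like those dividing $q_2$. Second, at primes dividing only one of $q_1,q_1'$ the $\alpha$-sum is the Fourier transform of $\mathrm{Kl}_3$, which is a classical Kloosterman sum, not a $\mathrm{Kl}_4$-type sum (the bound is the same).

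\textbf{Small $M$: the gap.} Kuznetsov/Deshouillers--Iwaniec, in the form the paper uses, is Proposition~\ref{DIbound} applied to $S(m\bar r,\pm n;sc)$ with $r=m$ and $q_2$ placed in $s$. This is your cusp-pair picture; the CRT/additive-character expansion is unnecessary. That bound requires a smooth sum over \emph{all} $c$ subject only to $(r,c)=1$. Your moduli, however, are squarefree and coprime to $q_2d'$, and the Kloosterman sum carries $(n,q)=1$.

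Removing these conditions by M\"obius inversion produces auxiliary divisors (the paper's $c^2$, $f$, $q_0$, together with $d,d'$). These must be absorbed into $s$, which is now of size $Q_2D$. The bound then grows like $\frac{Q}{d'}\big(MQ_2D+\sqrt{MNQ_2D}\big)$, up to the $\theta$-factor, and its hypothesis forces $D_0\ll Q^{1/2-\delta/2}$. So you need a separate estimate when $d'dc^2fq_0>D_0$. The paper opens the Kloosterman sum and uses the large sieve of Lemma~\ref{LargeSieve2} with the divisibility $\frac{q_2}{u'}dc^2fq_0\mid q$, obtaining $Q^2D_0^{-1/2}+Q\sqrt{Q_2MN}$. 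A Weil bound in that range loses roughly $Q^{\delta/2}M^{1/2}$. Likewise, your preliminary ``Weil for large $d'$'' step only removes $d'\gg Q^{1/3+\delta/3}$, so $d'$ cannot be dropped; in the paper it is part of the same $D_0$-split.

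\textbf{Consequence for the shape of the bound.} The paper switches regimes at $M\asymp Q^{\delta}Q_2^{3/2}$, where the Cauchy--Schwarz diagonal $Q^2(Q_2Q^{\delta}/M)^{1/2}$ is already $\le Q^2Q_2^{-1/4}$. For smaller $M$, it balances $Q^2D_0^{-1/2}$ against the $\sqrt{MNQ_2D}$-part of the spectral bound, $Q^{3/2+\delta}Q_2^{5/4}D_0^{1/2}$. This gives $D_0=Q^{1/2-\delta}Q_2^{-5/4}$ and the term $Q^{7/4+\delta/2}Q_2^{5/8}$. Inserting this $D_0$ into the exceptional part $Q^{1+2\theta}M^{1-\theta}Q_2D_0$ gives $Q^{3/2+(2-\delta)\theta}Q_2^{5/4-3\theta/2}$.

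Your attribution of $Q^{7/4+\delta/2}Q_2^{5/8}$ to the diagonal, and your claim that the non-exceptional spectral contribution is dominated, therefore rest on a loss-free spectral bound over squarefree moduli that you have not justified. The missing ingredient is the $D_0$-split of the M\"obius variables, with a large-sieve estimate for the large auxiliary moduli.
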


\begin{proof}[Proof of Theorem \ref{MainThm}]
Choosing $\delta = c\frac{\log Q_2}{\log Q}$, we have an error term 
\begin{align}
	Q^\varepsilon \Big( Q^2 Q_2^{-(c-1)/2}+Q^{7/4}Q_2^{3/4}&+Q^2Q_2^{-1/4}+Q^{7/4}Q_2^{5/8+c/2}+Q^{3/2+2\theta}Q_2^{5/4-3\theta/2-c\theta} \Big).
\end{align}

Setting the first term equal the fourth term $$ Q^2Q_2^{-(c-1)/2}=Q^{7/4}Q_2^{5/8+c/2},$$ we find $c=-\frac{1}{8}+\frac{\log Q}{4\log Q_2}$ and thus obtain an error term dominated by the first and the third term
\begin{align}
Q^\varepsilon\Big(Q^{15/8}Q_2^{9/16}+\frac{Q^2}{Q_2^{1/4}}\Big)
\end{align}
which is $\ll Q^{2-\varepsilon}$
for $Q^{8\varepsilon}\ll Q_2\ll Q^{2/9-32\varepsilon/9}$. 
\end{proof}
We prove Proposition \ref{prop:P1} in Section \S \ref{sec:P1}, and Proposition \ref{prop:P2} in Section \S \ref{sec:P2}. 

\section{Preliminary}\label{sec:prelim}
Here, we gather some standard calculations which are used repeatedly.
One key tool we will use is the large sieve for Dirichlet characters.  For later use, we state a version incorporating an additional divisiblity condition below. 

\begin{lemma}[Large Sieve with divisibility]\label{LargeSieve}
	Let $a_n$ be any complex numbers supported on $n\sim N$. We have 
	\begin{align}
		\sum_{\substack{q\leq Q\\ q_0\mid q}}\frac{q}{\phi(q)} \sumstar_{\chi \bmod q} \Big|\sum_{n} a_n\chi(n)\Big|^2 \ll  (\frac{Q^2}{q_0}+N ) \|a\|^2.
	\end{align}
\end{lemma}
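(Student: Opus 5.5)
The plan is to deduce Lemma \ref{LargeSieve} from the classical multiplicative large sieve by reducing every character to the primitive one inducing it and tracking the divisibility condition $q_0\mid q$. The classical inequality I will use is
\begin{align}
\sum_{r\leq R}\frac{r}{\phi(r)}\sumstar_{\chi \bmod r}\Big|\sum_n a_n\chi(n)\Big|^2\ll (R^2+N)\|a\|^2 .
\end{align}
I also need its refinement for moduli restricted to an arithmetic progression of multiples, $r=q_0 s$ with $s\le S$:
\begin{align}
\sum_{s\le S}\frac{q_0s}{\phi(q_0s)}\sumstar_{\chi\bmod q_0s}\Big|\sum_n a_n\chi(n)\Big|^2\ll (q_0S^2+N)\|a\|^2 .
\end{align}

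I will prove the refinement by duality. The dual form asks to bound
\begin{align}
\sum_{n\sim N}\Big|\sum_{s\le S}\ \sumstar_{\chi \bmod q_0 s} b_\chi\,\chi(n)\Big|^2 .
\end{align}
I write each primitive character through Gauss sums as
\begin{align}
\chi(n)=\tau(\bar\chi)^{-1}\sum_{a\bmod r}\bar\chi(a)e(an/r).
\end{align}
Then the additive large sieve applies to the points $a/(q_0s)$. These points all have denominators divisible by $q_0$, and two distinct such fractions differ by at least $1/(q_0 S^2)$. This spacing is what gives the factor $q_0S^2$ in place of $(q_0S)^2$. The Gauss sum normalisation produces the weight $r/\phi(r)$ in the usual way, following the standard proof, e.g.\ Iwaniec--Kowalski Theorem 7.13. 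With $S=Q/q_0$ the bound becomes $(Q^2/q_0+N)\|a\|^2$, which is the right-hand side of the lemma.

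If only primitive characters are meant, as the $\sumstar$ indicates, this already proves the lemma. If imprimitive characters had to be included, I would write $\chi$ mod $q$ as induced by $\chi^*$ mod $r$ with $r\mid q$. Since $a_n$ is arbitrary, the coprimality to $q/r$ would be absorbed by the standard Möbius trick. The main obstacle would then be that $q_0\mid q$ does not force $q_0\mid r$. I would handle this by splitting $q_0=q_0'q_0''$ with $q_0'\mid r$ and summing the resulting bounds, which costs only a divisor function $\tau(q_0)\ll q_0^{\varepsilon}$.

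The only genuinely nontrivial point is the spacing estimate for fractions with denominators in $q_0\mathbb{N}$. For distinct $a/(q_0s)\ne a'/(q_0s')$ the difference is $(as'-a's)/(q_0ss')$, which is at least $1/(q_0 S^2)$ in absolute value. This is elementary, so I expect the proof to be short once written in the duality framework.
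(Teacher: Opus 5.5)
Your argument is correct and is essentially the paper's proof. The paper likewise follows Iwaniec--Kowalski Theorem 7.13: it expands each primitive character by Gauss sums and applies the additive large sieve (Theorem 7.7) to the points $a/q$ with $q_0\mid q\le Q$. The spacing it uses, $1/[q,q']\ge q_0/Q^2$, is exactly your $1/(q_0S^2)$ with $S=Q/q_0$; your duality framing is harmless but unnecessary.

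One caution: drop the aside on imprimitive characters. Here $\sumstar$ denotes primitive characters, and the imprimitive version is in fact false, so no M\"obius trick with a divisor-function loss could rescue it. For $a_n$ the indicator of primes in $(N,2N]$ with $N\ge Q^2$, the principal characters alone contribute $\gg (Q/q_0)N^2/\log^2 N$. This exceeds $(Q^2/q_0+N)\|a\|^2\ll N^2/\log N$ once $Q/q_0\gg \log N$.
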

\begin{proof}
	The lemma can be proved following the proof of \cite[Theorem 7.13]{iw-kow}, using \cite[Theorem 7.7]{iw-kow} and the fact that $\frac{a}{q}-\frac{a'}{q'}\gg \frac{1}{[q,q']}\gg \frac{q_0}{Q^2}$. 
\end{proof}

Next we state a hybrid version of Lemma \ref{LargeSieve}.

\begin{lemma}[Hybrid Large Sieve with divisibility]\label{HybridLargeSieve}
	Let $a_n$ be any complex numbers supported on $n\sim N$. We have 
	\begin{align}
		\int_{-T}^T \sum_{\substack{q\leq Q\\ q_0\mid q}}\frac{q}{\phi(q)} \sumstar_{\chi \bmod q} \Big|\sum_{n} a_n\chi(n) n^{it}\Big|^2 \ll  (\frac{Q^2 T}{q_0}+N ) \|a\|^2.
	\end{align}
\end{lemma}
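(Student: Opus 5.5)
We follow the proof of the ordinary large sieve for characters, but use the hybrid large sieve for additive characters with continuous frequency \cite[Theorem 7.7]{iw-kow} (equivalently Gallagher's hybrid inequality) in place of the discrete version, keeping track of the divisibility condition $q_0\mid q$.

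\emph{Reduction to additive characters.} Each primitive $\chi \bmod q$ satisfies
\begin{align}
\chi(n)=\frac{1}{\tau(\bar\chi)}\sum_{a \bmod q}\bar\chi(a)e\Big(\frac{an}{q}\Big),
\end{align}
so
\begin{align}
\sum_n a_n\chi(n)n^{it}=\frac{1}{\tau(\bar\chi)}\sum_{a\bmod q}\bar\chi(a)S\Big(\frac aq,t\Big),\qquad S(\alpha,t)=\sum_n a_n n^{it}e(\alpha n).
\end{align}
We use $|\tau(\bar\chi)|^2=q$, extend the sum over $\chi$ to all characters mod $q$, and apply orthogonality, exactly as in \cite[Theorem 7.13]{iw-kow}:
\begin{align}
\frac{q}{\phi(q)}\sumstar_{\chi\bmod q}\Big|\sum_n a_n\chi(n)n^{it}\Big|^2\le \sumstar_{a\bmod q}\Big|S\Big(\frac aq,t\Big)\Big|^2.
\end{align}
The constraint $q_0\mid q$ passes through unchanged. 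Integrating in $t$, the left side of the lemma is at most
\begin{align}
\int_{-T}^T\sum_{\substack{q\le Q\\ q_0\mid q}}\sumstar_{a\bmod q}\Big|S\Big(\frac aq,t\Big)\Big|^2dt.
\end{align}

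\emph{Hybrid large sieve for well-spaced points.} The points $\alpha=a/q$ with $(a,q)=1$, $q\le Q$, $q_0\mid q$ are distinct modulo $1$. For two distinct such points,
\begin{align}
\Big\|\frac aq-\frac{a'}{q'}\Big\|\ge\frac{1}{[q,q']}\ge\frac{q_0}{qq'}\ge\frac{q_0}{Q^2},
\end{align}
because $q_0$ divides $\gcd(q,q')$. So the points are $\eta$-spaced with $\eta=q_0/Q^2$. We then invoke the hybrid inequality for an $\eta$-spaced set $\{\alpha_r\}$ and $n\sim N$:
\begin{align}
\sum_r\int_{-T}^T\Big|\sum_n a_n n^{it}e(\alpha_r n)\Big|^2dt\ll \big(T\eta^{-1}+N\big)\|a\|^2.
\end{align}
With $\eta^{-1}=Q^2/q_0$ this gives the claimed bound $(Q^2T/q_0+N)\|a\|^2$. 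One may assume $T\ge1$; for $T<1$ the bound follows from Lemma \ref{LargeSieve} after integrating over an interval of length $2T$.

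\emph{Main obstacle.} The only real step is the hybrid inequality for $\eta$-spaced points. If only \cite[Theorem 7.7]{iw-kow} in its discrete form is available, we derive it via Gallagher's lemma. First, up to an admissible loss, smooth the $t$-integral by a weight $\hat{w}(t/T)$. Next, write $n^{it}=e(t\log n/2\pi)$ and expand the square. Finally, apply the duality form of the large sieve to the points $(\alpha_r,t)$, with the spacing in $t$ of size $1/\log$-scale, $\asymp T^{-1}$ effectively. The diagonal contributes $N$ and the off-diagonal terms are controlled by the spacing $\eta$ times $T$. This gives $TQ^2/q_0$ without any extra logarithm, since we have assumed $n\sim N$ (dyadic support), so $\log n$ varies by $O(1)$. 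All other steps are formal.
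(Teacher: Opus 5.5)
Your argument is correct in outline, but it orders the ingredients differently from the paper. The paper applies Gallagher's integral inequality \cite[Theorem 1]{Gallagher} directly to the multiplicative sums. This replaces $\int_{-T}^T|\sum_n a_n\chi(n)n^{it}|^2\,dt$ by $T^2\int_0^\infty|\sum_{y<n\le ye^{1/T}}a_n\chi(n)|^2\,dy/y$. Each short sum, of length $\asymp N/T$, is then fed into Lemma \ref{LargeSieve} as a black box. Since each $n$ lies in the window for a set of $y$ of $dy/y$-measure $1/T$, this gives $T^2\cdot T^{-1}(Q^2/q_0+N/T)\|a\|^2$. You instead do the Gauss-sum reduction first and recompute the spacing $q_0/Q^2$ of the Farey points with $q_0\mid q$, which is exactly what underlies Lemma \ref{LargeSieve}. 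That leaves you needing a hybrid large sieve for additive characters. Your reduction inequality and spacing computation are right, and the Gauss-sum step commutes with the $t$-localization, so both proofs use the same inputs. The paper's order lets it reuse Lemma \ref{LargeSieve} (tacitly in its short-interval form, which its proof does give). Your order has the small advantage of applying \cite[Theorem 7.7]{iw-kow}, which is stated for arbitrary intervals, directly.

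The weak point is your `main obstacle' paragraph, which is not a proof as written. Note that \cite[Theorem 7.7]{iw-kow} is the discrete inequality, so the additive hybrid bound genuinely needs an argument. After smoothing in $t$ and expanding the square, you get a form in $n,m$ weighted by $\hat w\big(\tfrac{T}{2\pi}\log\tfrac nm\big)$, i.e.\ localized to $|n-m|\ll N/T$. A `duality for the points $(\alpha_r,t)$' with diagonal $N$ and off-diagonal $T\eta^{-1}$ is not justified. Here $t$ is continuous, so there is no spaced set. Moreover, a dual form would require bounding $\sum_n e(\beta n)n^{i\tau}$, which has stationary points when $\|\beta\|\lesssim T/N$. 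The right way to exploit the localization is Gallagher's lemma itself. With $b_n=a_ne(\alpha_rn)$,
\[
\int_{-T}^T\Big|\sum_n b_n n^{it}\Big|^2dt\ll T^2\int_0^\infty\Big|\sum_{y<n\le ye^{1/T}}b_n\Big|^2\frac{dy}{y}.
\]
Then sum over $r$ and apply \cite[Theorem 7.7]{iw-kow} on each window, which contains at most $y(e^{1/T}-1)+1$ integers. Integrating gives $(T\eta^{-1}+N)\|a\|^2$ for $T\ge1$. With that substitution your proof is complete, and your separate treatment of $T<1$ via Lemma \ref{LargeSieve} is fine.
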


\begin{proof}
	The result follows from proofs of previous hybrid large sieve bounds. For example, it follows from applying Gallagher  \cite[Theorem 1]{Gallagher} and our Lemma \ref{LargeSieve}.
\end{proof}

We also have the following large sieve type bound, which is \cite[Exercise 5]{iw-kow} when $q_0=1$. 

\begin{lemma}\label{LargeSieve2} For any complex numbers $\alpha_m, \beta_n$, we have 
	\begin{align}
		\sum_{\substack{q_0\mid q\leq Q}}\sumstar_{a\bmod q} \Big| \sum_{m\leq M}\sum_{n\leq N}\alpha_m\beta_n e\Big(\frac{a\bar{m}m}{q}\Big)\Big|^2\ll (\frac{Q^2}{q_0}+MN)\|\alpha\|^2\|\beta\|^2.
	\end{align}
	
\end{lemma}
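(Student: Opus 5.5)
The plan is to reduce Lemma \ref{LargeSieve2} to the multiplicative large sieve with divisibility, Lemma \ref{LargeSieve}. This mirrors how \cite[Theorem 7.13]{iw-kow} is deduced from the additive large sieve, but run in reverse. I read the phase as $e(a\bar m n/q)$, and I assume $\alpha_m$ is supported on $(m,q)=1$, which is implicit for $\bar m$ to make sense.

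\emph{Step 1: fix $q$ and open the sum over $a$ with characters.} For fixed $q$ with $q_0\mid q$, put $f(a)=\sum_{m,n}\alpha_m\beta_n e(a\bar m n/q)$. Parseval on $(\mathbb Z/q\mathbb Z)^*$ gives
$$\sumstar_{a \bmod q}|f(a)|^2=\frac1{\phi(q)}\sum_{\chi\bmod q}\Big|\sumstar_{a\bmod q}f(a)\bar\chi(a)\Big|^2 .$$
For $(n,q)=1$ the inner sum factorises:
$$\sumstar_{a} \bar\chi(a)e(a\bar m n/q)=\chi(\bar m n)\,\tau(\bar\chi).$$
When $(n,q)>1$, one first splits $n$ according to $(n,q)$; these pieces are handled like imprimitive characters below.

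\emph{Step 2: reduce to primitive characters.} Write each $\chi$ as induced by a primitive $\chi^*$ modulo $q^*\mid q$. The standard evaluation of $\tau(\bar\chi)$ in terms of $\tau(\bar\chi^*)$ shows $|\tau(\bar\chi)|^2\le q^*$ up to a factor $\mu^2(q/q^*)$. For primitive $\chi$ we have $|\tau(\bar\chi)|^2=q$. Since $|\chi(\bar m)|=|\bar\chi^*(m)|$ when $(m,q)=1$, the contribution of a fixed $\chi$ is at most
$$\frac{q}{\phi(q)}\Big|\sum_m \bar\alpha_m \chi(m)\Big|^2\Big|\sum_n\beta_n\chi(n)\Big|^2=\frac{q}{\phi(q)}\Big|\sum_{m,n}\bar\alpha_m\beta_n\,\chi(mn)\Big|^2 .$$
Summing over $q\le Q$ with $q_0\mid q$ therefore bounds the left side by a multiplicative large sieve sum over primitive characters modulo $q^*$, with $q^*$ ranging over divisors of moduli divisible by $q_0$. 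The imprimitive terms are organised exactly as in the proof of \cite[Theorem 7.13]{iw-kow}, tracking the divisibility by $q_0$.

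\emph{Step 3: apply the large sieve to the bilinear vector.} I would not collapse $mn=k$, since that costs a divisor function and the statement has no $\varepsilon$-loss. Instead I apply the duality form of Lemma \ref{LargeSieve} with the index set taken to be the pairs $(m,n)$, $m\le M$, $n\le N$, and the vector $x_{m,n}=\bar\alpha_m\beta_n$. The proof of Lemma \ref{LargeSieve} only uses the well-spacing of the Farey points $a/q$, namely $\frac{a}{q}-\frac{a'}{q'}\gg q_0/Q^2$. Following the argument of \cite[Theorem 7.7]{iw-kow} in this two-variable setting, I aim for
$$\Big(\frac{Q^2}{q_0}+MN\Big)\sum_{m,n}|x_{m,n}|^2=\Big(\frac{Q^2}{q_0}+MN\Big)\|\alpha\|^2\|\beta\|^2 .$$
The point is that the phases $a\bar m n/q$, as $(m,n)$ varies, play the role of a sequence of length $MN$.

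\emph{Main obstacle.} The delicate step is precisely this $\varepsilon$-free handling of the bilinear structure. If it becomes too cumbersome, I would fall back on a direct approach: expand the square, extend the $a$-sum to all residues by positivity, and use $\sum_{a\bmod q}e(ah/q)=q\mathbf 1_{q\mid h}$ with $h=\bar m n-\bar m'n'$. This gives a diagonal $nm'=n'm$ contributing $\ll (Q^2/q_0)\|\alpha\|^2\|\beta\|^2$ via $|xy|\le\frac12(|x|^2+|y|^2)$, and an off-diagonal $q\mid nm'-n'm\ne0$ controlled by $MN$. That route costs a harmless $Q^\varepsilon$ in the divisor counting, which is sufficient for all later applications.
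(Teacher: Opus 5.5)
There is a genuine gap: Step 3 carries the entire content of the lemma, and it does not work. In fact the $\varepsilon$-free bound you aim for is false in the stated generality.

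\textbf{Step 3.} After Step 2 the characters see only the product $mn$. So Lemma \ref{LargeSieve}, or the proof of \cite[Theorem 7.7]{iw-kow} (which needs one integer variable running over an interval), sees only the collapsed vector $c_k=\sum_{mn=k}\bar\alpha_m\beta_n$ and returns $(Q^2/q_0+MN)\sum_k|c_k|^2$.

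Taking the pairs $(m,n)$ as the index set does not produce $\sum_{m,n}|x_{m,n}|^2$. For a general vector that inequality is false: take $q_0=1$, $M=N=K=Q$ and $x_{m,n}=\mathbf 1_{m=n}$. The inner sum is then $e(a/q)\,\#\{k\le K:(k,q)=1\}$, so the left side is $\gg Q^4$, while $(Q^2+MN)\|x\|^2\asymp Q^3$. You offer no way to exploit the rank-one shape $x_{m,n}=\bar\alpha_m\beta_n$.

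None can exist, because the $\varepsilon$-free bound fails even for rank-one vectors. Neither your target nor the lemma as literally stated is true. Take $q_0=Q=p$ prime, $M=N$ with $N^2<p$, and $\alpha=\beta=\mathbf 1_{[1,N]}$.
\begin{itemize}
\item Expanding the square, $\bar m n\equiv \bar m'n'\pmod p$ forces $m'n=mn'$.
\item Hence the left side equals $pT-N^4$, where $T=\#\{(m,n,m',n')\in[1,N]^4: m'n=mn'\}\asymp N^2\log N$.
\item The right side is $(p+N^2)N^2\le 2pN^2$, so the ratio grows like $\log N$.
\end{itemize}
The loss comes exactly from $(m,n)\mapsto \bar m n$ being constant along $(tm,tn)$, i.e.\ from $\sum_k|c_k|^2$ exceeding $\|\alpha\|^2\|\beta\|^2$. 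The divisor-type loss you wanted to avoid is forced.

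\textbf{Your fallback.} This fails for a different reason. Extending $\sumstar_a$ to all $a\bmod q$ brings in $a\equiv 0$. That term alone contributes $\sum_{q_0\mid q\le Q}|\sum_{(m,q)=1}\alpha_m\sum_n\beta_n|^2$, which is of order $(Q/q_0)MN\|\alpha\|^2\|\beta\|^2$ for constant coefficients. Accordingly the off-diagonal $q\mid nm'-n'm\neq 0$ is of size $(Q/q_0)MN$, not $MN$. This loss is not harmless: in \eqref{largesieveboundMsmall}, where $MN$ reaches about $Q^2/Q_2$, it is fatal.

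\textbf{What does work.} The paper gives no argument beyond citing \cite[Exercise 5]{iw-kow} for $q_0=1$, and it uses the lemma only inside bounds that already carry $Q^\varepsilon$. So the following suffices: make Steps 1--2 precise, then perform the collapse $k=mn$ that you rejected.
\begin{itemize}
\item Handle $g=(n,q)>1$ by Cauchy--Schwarz over $g\mid q$ and passing to modulus $q/g$.
\item Keep $|\tau(\bar\chi)|^2=\mu^2(r)\,q^*\,\mathbf 1_{(r,q^*)=1}$ for $q=q^*r$. Replacing it by $q$, as in your displayed bound, costs a factor $Q/q^*$ once you sum over $r$.
\item Split $q_0=(q_0,q^*)(q_0,r)$ and apply Lemma \ref{LargeSieve} to the $q^*$-sum with the vector $c_k$.
\item Use $\sum_{r\le Q}1/\phi(r)\ll\log Q$.
\end{itemize}
Since $\sum_k|c_k|^2\le\max_{k\le MN}\tau(k)\,\|\alpha\|^2\|\beta\|^2$, this proves the lemma with an extra factor $(QMN)^{\varepsilon}$.
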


Finally, we fold a technical calculation into the following Lemma, which will be used twice in what follows. 

\begin{lemma}\label{lem:truncationcalc}
	Let $K \ll Q$ be a real parameter and $d, \ell \ll Q$ be integers.  Further, let $W$ and $V$ be defined as before, and $\sumflat_{\substack{k\le K \\ \ell|k}}$ denote a sum over any subset of positive integers $k\le K$ satisfying $\ell |k$.  Let $M, N $ be real parameters which are allowed to depend on $k, d$ and $\ell$ and satisfy $M,N\ll Q^r$ for some fixed $r$.  Finally, let $A \ge 1$ and let $\rho$ be a non-negative arithmetic function satisfying
	$$\rho(k) \ll \frac{\phi(k)}{A}$$
	for all $k$.  Then
	\begin{align}
		\mathcal T &:= \sumflat_{\substack{k\le K \\ \ell|k}} \rho(k) \sum_{(m, kd)=1}\frac{1}{\sqrt{m}}W\Big(\frac{m}{M}\Big)\sum_{\pm}\sum_{\substack{n\not=m\\ n\equiv \pm m\bmod k\\ (n, kd)=1}}\frac{A_F(1,n)}{\sqrt{n}}V\Big(\frac{n}{N}\Big) \\
		&\ll \frac{Q^\varepsilon K^{2}}{A \ell}  +  \frac{ Q^\varepsilon K \sqrt{M}}{A \ell} + \frac{Q^\varepsilon K^{7/4}}{A \sqrt{\ell}}.
	\end{align}	
%
\end{lemma}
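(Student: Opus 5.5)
The plan is to detect the congruence $n\equiv \pm m \bmod k$ with Dirichlet characters and then use the large sieve. For $(mn,k)=1$, orthogonality gives
\begin{align}
\mathbf 1_{n\equiv \pm m\,(k)}=\frac{1}{\phi(k)}\sum_{\chi\bmod k}\chi(n)\bar\chi(\pm m).
\end{align}
Each $\chi$ is induced by a unique primitive $\chi^*\bmod k^*$ with $k^*\mid k$. The coprimality conditions $(m,kd)=1$, $(n,kd)=1$ can be inserted into the twisted sums at a cost of $Q^\varepsilon$, for instance by multiplying by the principal character mod $kd$, which only changes Euler factors at primes dividing $kd\ll Q^2$. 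Since $\rho(k)/\phi(k)\ll 1/A$, we get
\begin{align}
\mathcal T\ll \frac{1}{A}\sumflat_{\substack{k\le K\\ \ell\mid k}}\ \sum_{\chi \bmod k}|S_W(\bar\chi)|\,|S_V(\chi)|+\frac{1}{A}\sumflat_{k}\,\big|\text{diagonal correction}\big|.
\end{align}
Here $S_W(\chi)=\sum_m \chi(m)m^{-1/2}W(m/M)$ and $S_V(\chi)=\sum_n A_F(1,n)\chi(n)n^{-1/2}V(n/N)$. The removed diagonal $n=m$ sits inside the principal-character term. That term, together with $n=m$, is handled trivially via Rankin--Selberg bounds. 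It should give the $K\sqrt M/(A\ell)$ term: for each $k\le K$ with $\ell\mid k$ (about $K/\ell$ values), the principal character contributes $|S_W(\chi_0)|\ll \sqrt M$ against $|S_V(\chi_0)|\ll Q^\varepsilon$. The latter holds because $\sum A_F(1,n)n^{-1/2}V(n/N)$ is small by the analytic continuation of $L(s,F)$ and shifting contours; this is legitimate because $F$ is cuspidal.

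For nonprincipal $\chi$, first I reduce to primitive characters, summing over $k^*\mid k$ with $\ell\mid k$. For each primitive $\chi^* \bmod k^*$ I apply the functional equations (Mellin representation of $W,V$ and contour shift) to replace $S_W$ and $S_V$ by Dirichlet polynomials of effective length $\min(M,\sqrt{k^*})$ and $\min(N,(k^*)^{3/2})$ respectively, up to $Q^\varepsilon$ and a short $t$-integral. Then Cauchy--Schwarz in the $\chi$-sum, with dyadic $k^*\sim K^*$, reduces everything to mean values. The hybrid large sieve (Lemma \ref{HybridLargeSieve}) bounds $\sum_{k^*\sim K^*}\sum^*_\chi|S_W|^2$ by $(K^{*2}+\sqrt{K^*})Q^\varepsilon$. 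For the degree-three piece, mean values like $\sum^*_\chi|S_V|^2\ll (K^{*2}+K^{*3/2})Q^\varepsilon$ follow from Rankin--Selberg. Alternatively, I would factor $S_V$ via a $GL_3$ approximate functional equation of length $K^{*3/2}$, split it as a product of polynomials of lengths $K^{*}$ and $K^{*1/2}$ (or use the fourth moment of $S_W$), and apply Cauchy--Schwarz with the large sieve.

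The divisibility $\ell\mid k$ is where the saving in $\ell$ comes from. The multiplicity of $k$ given $k^*$ is $\ll Q^\varepsilon K/(\ell\,\cdot)$ when $\ell\mid k^*$-type conditions hold. Otherwise I use Lemma \ref{HybridLargeSieve} with $q_0=\ell$, or with $q_0=\ell/(\ell,\cdot)$, to get the factor $K^{*2}/\ell$. Combining the pieces should give roughly $A^{-1}\big(K^2/\ell+K^{3/2}\cdot K^{1/4}/\sqrt\ell\big)$. The $K^{7/4}/\sqrt{\ell}$ term arises from the regime where the $N$-side length $K^{3/2}$ exceeds $K^2/\ell$, so that geometric-mean balancing $\sqrt{(K^2/\ell)(K^{3/2})}$ is what appears.

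I expect the main obstacle to be the bookkeeping of imprimitive characters together with the $\ell\mid k$ restriction and the coprimality conditions with $d$, while keeping the sieve saving $1/\ell$. The $GL_3$ functional equation for $S_V$ must also be handled uniformly in $N$ (up to $Q^r$). My first attempt would write $k=k^*g$ and bound $\#\{g\le K/k^*:\ell\mid k^*g\}\ll K(\ell,k^*)/(\ell k^*)$, then apply Lemma \ref{HybridLargeSieve} with $q_0=\ell/(\ell,g)$ inside dyadic ranges.
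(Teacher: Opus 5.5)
Your plan is essentially the paper's proof: detect the congruence with characters mod $k$, use cuspidality of $F$ to bound the $\chi_0$-term by $Q^\varepsilon K\sqrt M/(A\ell)$, and for $\chi\neq\chi_0$ pass to primitive $\psi \bmod c$ with $c\mid k$, count $k\le K$ with $[\ell,c]\mid k$ as $\ll K(\ell,c)/(\ell c)$, shift contours to $\Re s=\Re z=\varepsilon$, and apply Cauchy--Schwarz with the large sieve with divisibility $q_0=\delta=(\ell,c)$ summed over $\delta\mid\ell$, which yields $Q^\varepsilon\bigl(K^2/(A\ell)+K^{7/4}/(A\sqrt\ell)\bigr)$. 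Two small bookkeeping slips: the added-back diagonal $n=m$ is bounded by $\sum_k\rho(k)\,Q^\varepsilon\ll Q^\varepsilon K^2/(A\ell)$ (it does not belong to the $K\sqrt M/(A\ell)$ term), and once you sum out $g$ via the count $K(\ell,k^*)/(\ell k^*)$ the sieve modulus must be $(\ell,k^*)$ rather than $\ell/(\ell,g)$---the latter choice is valid only if you keep $g$ and forgo the count, which gives the same final bound.
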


\begin{proof}
	We first add back the diagonal contribution coming from the terms $m = n$.  Since $\rho(k) \ll \phi(k)/A \le k/A$, this costs
	\begin{equation}
		\ll \frac{Q^\varepsilon K}{A} \sumflat_{\substack{k\le K \\ \ell|k}} \sum_{m \ll Q^r} \frac{|A_F(1,m)|}{m} \ll  \frac{Q^\varepsilon K^2}{A\ell}.
	\end{equation}
	
	Now, expressing the congruence conditions as a sum over characters modulo $k$, we obtain
	\begin{align}
		2\mathcal T + O\left(\frac{Q^\varepsilon K^{2}}{A\ell} \right) 
		=& \sum_{\pm}\sumflat_{\substack{k\le K \\ \ell|k}} \frac{\rho(k)}{\phi(k)}\sum_{\substack{\chi \bmod k}} \sum_{(m, dk)=1}\frac{\chi(\pm m)}{\sqrt{m}}W\Big(\frac{m}{M}\Big)\\&\times\sum_{\substack{(n, dk)=1}}\frac{\overline{\chi(n)}A_F(1,n)}{\sqrt{n}}V\Big(\frac{n}{N}\Big).
	\end{align}
We write the right hand side of the above as $\mathcal T(\chi = \chi_0) + \mathcal T(\chi \neq \chi_0)$, where $\mathcal T(\chi = \chi_0)$ is the contribution of the principal characters $\chi \bmod k$ and $\mathcal T(\chi\neq \chi_0)$ is the contribution from the rest of the characters.
	
	For $\mathcal T(\chi=\chi_0)$ we have
	\begin{align}\label{eqn:bddtrivialchar}
		\sum_{(n, dk)=1}\frac{A_F(1,n)}{\sqrt{n}}V\Big(\frac{n}{N}\Big)&=\frac{1}{2\pi i}\int_{(3)} L^{(dk)}(1/2+s, F) N^{s}G(s) \frac{\gamma(s+1/2)}{\gamma(1/2)}\frac{ds}{s}\\
		& =L^{(dk)}(1/2, F)+O(\tau(dq)N^{-\varepsilon_0})\ll Q^\varepsilon,
	\end{align}
	obtained by shifting the contour to $\Re(s) = -\varepsilon_0$.  Since $\rho(k)/\phi(k) \ll 1/A$, this gives
	\begin{align}\label{Tchi=chi0}
		\mathcal T(\chi = \chi_0) \ll \frac{Q^\varepsilon}{A} \sumflat_{\substack{k\le K \\ \ell|k}} \sqrt M \ll \frac{Q^\varepsilon K \sqrt M}{A\ell}.
	\end{align}
	
For $\mathcal T(\chi\neq \chi_0)$, we write the sum over $\chi \bmod k$ as a sum over primitive characters $\psi \bmod c$ with $c \mid k$, apply a dyadic subdivision $c \sim S$ with $S$ a power of $2$ satisfying $S \ll K$, and let $\mathcal T'(S)$ denote the contribution of the dyadic interval.  It suffices to prove
	$$\mathcal T'(S) \ll \frac{Q^\varepsilon K^2}{A\ell} + \frac{Q^\varepsilon K^{7/4}}{A\sqrt \ell}.$$
	We may assume without loss of generality that $\ell \ll K$, since otherwise the sum over $k$ is empty. Using $\rho(k)/\phi(k) \ll 1/A$ and $\ell\mid k, c\mid k$, we see that 
	\begin{align}
		\mathcal T'(S)\ll \frac{Q^\varepsilon K}{A}\sum_{c\sim S}\frac{1}{[\ell, c]}\sum_{\psi \bmod c}^*  \sum_{(m, d)=1}\frac{\chi(m)}{\sqrt{m}}W\Big(\frac{m}{M}\Big)\sum_{\substack{(n, d)=1}}\frac{\overline{\chi(n)}A_F(1,n)}{\sqrt{n}}V\Big(\frac{n}{N}\Big).
	\end{align}
Using definitions of $W$ and $V$ to expand both the $m$- and $n$-sums by Mellin inversion, moving both lines of integration to $\Re(s) = \Re(z) = \varepsilon$ without passing any poles since $\psi$ is non-principal, and using the estimates
	$$|L^{(kd)}(1/2 + z, F\times \psi)| \ll (kd)^\varepsilon |L(1/2 + z, F\times \psi)|, \quad |L^{(kd)}(1/2 + z, \psi)| \ll (kd)^\varepsilon |L(1/2 + z, \psi)|$$
	for any $z$ with $\Re z > 0$, together with the exponential decay of $G$ and the Gamma factors, we obtain
	\begin{align}
		\mathcal T'(S) 
		& \ll \frac{Q^{\varepsilon} K}{A} \sum_{c \sim S }\frac{1}{[\ell, c]} \sumstar_{\psi \bmod c} \int_{(\varepsilon)} e^{-|\Im (s)|} |L(\tfrac12+s, \psi)|  |ds| \int_{(\varepsilon)} e^{-|\Im (z)|} |L(\tfrac12+z, F\times \psi)| |dz| \\
		&\ll \frac{Q^{\varepsilon} K}{A\ell} \sum_{\delta|\ell} \delta\, \mathcal T_1^{1/2} \mathcal T_2^{1/2},
	\end{align}
	where
	\begin{align}
		\mathcal T_1 := \sum_{\substack{c \sim S\\ \delta|c} }\frac{1}{c} \sumstar_{\psi \bmod c} \int_{(\varepsilon)} \exp (-|\Im (s)|) |L(1/2+s, \psi)|^2  |ds|,
	\end{align}
	and
	\begin{align}
		\mathcal T_2 := \sum_{\substack{c \sim S\\ \delta|c} }\frac{1}{c} \sumstar_{\psi \bmod c} \int_{(\varepsilon)} \exp (-|\Im (z)|) |L(1/2+z, F\times \psi)|^2  |dz|,		
	\end{align}
	upon applying Cauchy--Schwarz separately to the sums over $c$ and $\psi$, as well as to the integrals over $s$ and $z$.  Using the approximate functional equation and the large sieve inequality for Dirichlet characters (Lemma \ref{LargeSieve}), we obtain
	$$\mathcal T_1 \ll S^\varepsilon\frac{1}{S} \left( \frac{S^2}{\delta} + \sqrt{S}\right) \ll \frac{S^{1+\varepsilon}}{\delta},$$
	using the assumption that $\delta \le \ell \ll S$, and
	$$\mathcal T_2 \ll S^\varepsilon \frac{1}{S}\left(\frac{S^2}{\delta} + S^{3/2}\right)\ll S^\varepsilon(\frac{S}{\delta}+S^{1/2}).$$
	Hence
	\begin{align}
		\mathcal T'(S) &\ll \frac{Q^{\varepsilon}K}{A\ell} \sum_{\delta|\ell} \delta \left( \frac{S}{\delta} + \frac{ S^{3/4} }{\sqrt{\delta}} \right) \\
		&\ll \frac{Q^{\varepsilon}K^2}{A\ell} + \frac{Q^{\varepsilon}K^{7/4}}{A\sqrt{\ell}},
	\end{align}
	using that $S \ll K$, which suffices.
\end{proof}

\section{Proof of Proposition \ref{prop:P1}}\label{sec:P1}
Following \cite{CIS}, we write $P_1=\mathcal D+\mathcal {N}$, where $\mathcal D$ is the contribution from the diagonal terms $n=m$ and $\mathcal{N}$ is contribution from the terms $n\neq m$. 

\subsection{Diagonal term}
The term when $n=m$ contributes to the main term. 
\begin{align}
\mathcal {D}&=\frac{1}{2}\sum_{q\in \mathcal Q}\Psi\Big(\frac{q}{Q}\Big) \sum_{r\mid q}\phi(r)\mu(q/r)\sum_{\pm}\sum_{\substack{(m,q)=1\\ r|m\pm m}}\frac{A_F(1,m)}{m}W\Big(\frac{m}{q^{1+\eta}}\Big)V\Big(\frac{m}{q^{2-\delta}}\Big)\\
&=\sum_{q\in \mathcal Q}\Psi\Big(\frac{q}{Q}\Big)\phi^{\flat}(q)\sum_{(m,q)=1}\frac{A_F(1,m)}{m}W\Big(\frac{m}{q^{1+\eta}}\Big)V\Big(\frac{m}{q^{2-\delta}}\Big)
\end{align}
where we used  
\begin{align}
\frac{1}{2}\sum_{\pm}\sum_{\substack{r\mid q\\ r|m\pm m}}\phi(r)\mu(q/r)=\phi^{\flat}(q), \quad (r,m)=1
\end{align}
which is the total number of even primitive characters $\bmod \, q$.
From the definition of $W, V$, we see that 
\begin{align}
	S := &\sum_{(m, q)=1}\frac{A_F(1,m)}{m}W\Big(\frac{m}{q^{1+\eta}}\Big)V\Big(\frac{m}{q^{2-\delta}}\Big)\\
	=&\sum_{(m, q)=1} \frac{A_F(1,m)}{m}\frac{1}{(2\pi i)^2}\int_{(3)} \int_{(3)} \Big(\frac{m}{q^{1+\eta}}\Big)^{-s}\Big(\frac{m}{q^{2-\delta}}\Big)^{-z}\\& \times G(s)G(z)\frac{\gamma(z+1/2)\Gamma_{\RR}(s+1/2)}{\gamma(1/2)\Gamma_{\RR}(1/2)}\frac{ds}{s}\frac{dz}{z}\\
	=&\frac{1}{(2\pi i)^2}\int_{(3)}\int_{(3)}L(1+s+z, F\times \chi_0)G(s)G(z)\\&\times\frac{\gamma(z+1/2)\Gamma_{\RR}(s+1/2)}{\gamma(1/2)\Gamma_{\RR}(1/2)} (q^{1+\eta})^{s}(q^{2-\delta})^{z}\frac{ds}{s}\frac{dz}{z}.
\end{align}
We first shift the $z$ contour to $\tRe z = \varepsilon_0>0$ without passing any poles.  Next, we shift the $s$ contour to $\Re(s)=-1/2+\varepsilon_0$,  encountering one pole at $s=0$.  Bounding the resulting integral gives 
\begin{align}
	S = \frac{1}{(2\pi i)}\int_{(\varepsilon_0)}L(1+z, F\times \chi_0)G(z)\frac{\gamma(z+1/2)}{\gamma(1/2)}(q^{2-\delta})^{z}\frac{dz}{z}+O(q^{(-1/2+3\varepsilon_0)(1+\epsilon)}).
\end{align}
Shifting the $z$ contour to $\Re(z)=-1/2+\varepsilon_0$, we encouter one pole at $z=0$ which gives 
\begin{align}
	S = L(1, F\times \chi_0)+ O(q^{(-1/2+\varepsilon_0)(2-\delta)})+O(q^{(-1/2+3\varepsilon_0)(1+\eta)}).
\end{align}
Thus we have 
\begin{align}\label{DiagonalP1}
	\mathcal D=\sum_{q\in \mathcal Q}\Psi\Big(\frac{q}{Q}\Big)\phi^{\flat}(q)L(1, F\times \chi_0)+O(Q^{3/2+\varepsilon}) 
\end{align}
for any $\varepsilon>0$ by choosing $\varepsilon_0, \eta$ sufficiently small. 

\subsection{Off-diagonal term}
We write
\begin{align}
\mathcal N
&=\frac{1}{2}\sum_{\substack{r, d\\ rd\in \mathcal Q}}\Psi\Big(\frac{rd}{Q}\Big) \mu(d)\phi(r)\sum_{(m, rd)=1}\frac{1}{\sqrt{m}}W\Big(\frac{m}{(rd)^{1+\eta}}\Big)\sum_{\pm}\sum_{\substack{n\not=m\\ n\equiv \pm m\bmod r\\ (n, rd)=1}}\frac{A_F(1,n)}{\sqrt{n}}V\Big(\frac{n}{(rd)^{2-\delta}}\Big).
\end{align} 

\begin{prop}\label{prop:G}
With notation as above, for any real parameter $R \ge 1$
\begin{equation}
	\mathcal N \ll \frac{Q^{2+\varepsilon}}{R} + Q^{7/4+\varepsilon}Q_2^{1/2} +   Q^{2-\delta+\varepsilon} Q_2 {R}  +  Q^{7/4-3 \delta/4+\varepsilon} Q_2^{3/2}R^{3/2}.
\end{equation}	
for any $\varepsilon>0$ by choosing $\eta$ sufficiently small in terms of $\varepsilon$. 
\end{prop}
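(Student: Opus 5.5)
The plan is to follow the asymptotic large sieve of \cite{CIS}: split $\mathcal N$ according to the size of the cofactor $d=q/r$, and feed both pieces into Lemma \ref{lem:truncationcalc}. Write $\mathcal N=\mathcal N_{>}+\mathcal N_{\le}$, where $\mathcal N_{>}$ collects $d>R$ and $\mathcal N_{\le}$ collects $d\le R$. The $\mathcal N_{>}$ piece should produce $Q^{2+\varepsilon}/R$ and $Q^{7/4+\varepsilon}Q_2^{1/2}$. The $\mathcal N_{\le}$ piece, after switching moduli, should produce the two $\delta$-dependent terms.

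\emph{Large $d$.} For $d>R$ I fix $d$ and apply Lemma \ref{lem:truncationcalc} directly with the following choices:
\begin{itemize}
\item $k=r$, ranging over $r\ll K:=Q/d$ with $rd\in\mathcal Q$;
\item $\rho(r)=\phi(r)\Psi(rd/Q)$ and $A=1$;
\item $M=(rd)^{1+\eta}$ and $N=(rd)^{2-\delta}$.
\end{itemize}
The factorization $rd=q_1q_2$ is used only through a divisibility condition. Write $d=d_1d_2$ with $d_i\mid q_i$. Then $r$ is divisible by $\ell:=q_2/d_2$ or by $q_1/d_1$, and I would sum over the possible $q_2$-part as an outer variable. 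This gives
\[
\frac{Q^{2+\varepsilon}}{d^2\ell}+\frac{Q^{3/2+\varepsilon}}{d\ell}+\frac{Q^{7/4+\varepsilon}}{d^{7/4}\sqrt{\ell}}.
\]
Summing over $d>R$ and over the outer variables should give $Q^{2+\varepsilon}/R+Q^{7/4+\varepsilon}Q_2^{1/2}$. The factor $Q_2^{1/2}$ arises from the $\sqrt\ell$ loss when the $q_2$-part of $r$ is summed over.

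\emph{Small $d$.} For $d\le R$ the modulus $r\asymp Q/d$ is large. I write $n\mp m=\pm rh$ with $h\ne0$. Since $m\ll Q^{1+\eta}$ and $n\ll Q^{2-\delta+\varepsilon}$, this forces $|h|\ll Q^{1-\delta+\varepsilon}d$. The condition $n\equiv\pm m\pmod r$ then becomes $n\equiv\pm m\pmod{h}$, with $r=|n\mp m|/h$.

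I then remove the arithmetic constraints on $r$ one at a time.
\begin{itemize}
\item $\phi(r)$: expand as $r\sum_{b\mid r}\mu(b)/b$. Large $b$ is handled trivially, and for small $b$ the condition $b\mid r$ becomes a congruence modulo $bh$.
\item Squarefreeness of $rd$: expand $\mu^2$ via $\sum_{a^2\mid rd}\mu(a)$ and treat large $a$ trivially.
\item Factorization $rd=q_1q_2$: impose it through a $q_2$-divisibility $\ell\mid$ of the new modulus, so the effective modulus is $k=hq_2$-type, of size $K\ll Q^{1-\delta+\varepsilon}Q_2R$.
\item Smooth weight: the factor $r\,\Psi(rd/Q)=\frac{|n\mp m|}{h}\Psi\big(\frac{|n\mp m|d}{hQ}\big)$ is separated in $m,n$ by Mellin inversion. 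It contributes a weight $\rho(k)\ll \phi(k)/A$ with $A\asymp K d/Q$ up to $Q^\varepsilon$ factors.
\end{itemize}
Lemma \ref{lem:truncationcalc} then gives roughly $K^2/(A\ell)+K^{7/4}/(A\sqrt\ell)$. After summing over $d\le R$ and the auxiliary variables, this should become $Q^{2-\delta+\varepsilon}Q_2R+Q^{7/4-3\delta/4+\varepsilon}Q_2^{3/2}R^{3/2}$. The middle term $K\sqrt M/(A\ell)$ is absorbed since $M\ll Q^{1+\eta}$.

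\emph{Main obstacle.} I expect the main difficulty to be the bookkeeping in the switched-modulus step. The new variable $r=(n\mp m)/h$ must simultaneously satisfy the support condition of $\Psi$, be squarefree and coprime to $d$, and respect the $q_1q_2$ size constraints. Each of these has to be converted into congruence or divisibility conditions on $h$, with $\rho(k)\ll\phi(k)/A$ verified. The coprimality conditions $(m,rd)=(n,rd)=1$ also involve $r$ itself; I would handle them by M\"obius inversion and treat large divisors trivially. Finally, the Mellin separation of $|n\mp m|$ needs care near $n\approx m$, but the condition $r\asymp Q/d$ keeps $|n\mp m|\gg hQ/d$ and so avoids that region.
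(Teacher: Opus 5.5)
Your large-$d$ step is correct and is essentially the paper's bound for $\mathcal S$. The paper truncates at $adc>R$ instead, where $a$ and $c$ are the M\"obius variables from $\phi(r_1)$ and from squarefreeness. Your separate truncations of large $b$ and of the squarefree variable work the same way, \emph{provided} they also go through Lemma~\ref{lem:truncationcalc}; a literal absolute-value bound there loses roughly $Q^{1/2-\delta/2}$. The genuine gap is the small-$d$ step: Lemma~\ref{lem:truncationcalc} cannot be applied to the sum after divisor switching, for two reasons.

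\textbf{The weights are no longer separated.} The lemma needs weights of the form $\rho(k)W(m/M)V(n/N)$, with $M,N$ depending only on $k,d,\ell$. Its proof uses the exponential decay of $G(s)$ to confine the Mellin variables to $|\Im s|\ll Q^{\varepsilon}$. After switching, $q=d|n\mp m|/h$. So not only $r\Psi(rd/Q)$ but also $W(m/q^{1+\eta})$ and $V(n/q^{2-\delta})$ become genuinely joint functions of $(m,n)$ through $|n\mp m|$; none of this is a $\rho(k)$. Separating the variables needs a two-variable Mellin transform (Lemma~\ref{lem:MellinXY}). That transform only decays once $|s_1|\gg d/(gh)$, plus decay in $|s_1+s_2|$. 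So for small $h$ you must control $\int_{|s|\ll \max(1,D/(gH))Q^{\varepsilon}}|L(\tfrac12+s,\psi)L(\tfrac12+z-s,F\times\overline{\psi})|\frac{ds}{1+|s|}$, which needs a large sieve uniform over a long $t$-range. The paper does this with Lemma~\ref{HybridLargeSieve} and the bookkeeping $H\ll Q^{1-\delta}D/g$, $S\ll D/(gH)$ in Proposition~\ref{prop:E}.

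\textbf{The new modulus is not coprime to $mn$.} The switched congruence $h\mid n\mp m$ cannot be detected by characters modulo $h$, and Lemma~\ref{lem:truncationcalc} assumes $(mn,k)=1$. The original conditions give $(mn,rd)=1$ but not $(mn,h)=1$: a prime dividing both $m$ and $n$ but not $r$ can divide $h$. The paper first extracts $g=(m,n)$, writing $m=gM$, $n=gN$ with $(M,N)=1$, so the new modulus is automatically coprime to $MN$. This produces $A_F(1,gN)$, which requires the Hecke-relation generating function of Lemma~\ref{lem:generatingfunction} (losing $g^{\vartheta}$) and the convergent $g$-sums of Lemma~\ref{lem:gsumbdd}. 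The condition $(g,r)=1$ becomes a M\"obius variable $b\mid g$ that is folded into the modulus rather than truncated.

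The principal-character term also has to be redone in this two-variable setting (Proposition~\ref{lem:M}). It cannot simply be read off from the middle term of Lemma~\ref{lem:truncationcalc}. These ingredients, not the bookkeeping you flag as the main obstacle, are the substance of the estimate for $\mathcal G$, and your proposal does not supply them.
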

We first prove Proposition \ref{prop:P1} assuming Proposition \ref{prop:G}.
\begin{proof}[Proof of Proposition \ref{prop:P1}]
By choosing $R = Q^{\delta/2} Q_2^{-1/2}$, we see that

\begin{align}
	\mathcal N \ll Q^\varepsilon\Big(Q^{2-\delta/2}Q_2^{1/2} + Q^{7/4} Q_2^{3/4}\Big).
\end{align}
Combining this with \eqref{DiagonalP1}, we see that 
	\begin{align}
		P_1&=\mathcal D+ O(Q^{2-\delta/2+\varepsilon}Q_2^{1/2} + Q^{7/4+\varepsilon} Q_2^{3/4}),
	\end{align}
	as desired.
\end{proof}

\section{Proof of Proposition \ref{prop:G}}\label{sec:G}
\subsection{Set up}\label{setupG} For $rd \in \mathcal Q$, we write $rd = q_1 q_2$ for $\frac{Q_1}{2} \le q_1 \le 2Q_1, Q_2\leq q_2 \leq 2Q_2$.  Since $q_1q_2$ is squarefree, we may write $r = r_1 r_2$ where $r_1 = (r, q_1)$ and $r_2 = (r, q_2)$.  Further, for notational brevity, we shall always write 
\begin{equation}\label{eqn:qdef}
	q = rd = r_1r_2d.
\end{equation}
We then write

\begin{align}\label{eqn:calGdcomp}
	\mathcal N = \frac 12 \sum_{q_2 \sim Q_2} \sum_{\substack{d_2 r_2 = q_2}} \sum_{\substack{(d,r_2)=1\\ d_2|d}} \mu(d) \mathcal N(d, r_2) 
\end{align}
where
\begin{equation}
	\begin{split}
	\mathcal N(d, r_2) = \phi(r_2)&\sum_{\substack{r_1 \\ (r_1, dr_2) = 1 \\  r_1 \text{squarefree}}} \Psi\Big(\frac{r_1r_2d}{Q}\Big) \phi(r_1) \sum_{(m,q)=1}\frac{1}{\sqrt{m}}W\Big(\frac{m}{q^{1+\eta}}\Big)\\&\times\sum_{\pm}\sum_{\substack{n\not=m\\ n\equiv \pm m \bmod r_1r_2\\ (n, q)=1}}\frac{A_F(1,n)}{\sqrt{n}}V\Big(\frac{n}{q^{2-\delta}}\Big).
	\end{split}
\end{equation}
Note that the support conditions on $\Psi$ and the conditions on $r_1, r_2$ and $d$ imply that $r_1r_2 d \in \mathcal Q$.  We write $\phi(r_1)=\sum_{a\ell=r_1}\mu(a)\ell$, giving

\begin{equation}
	\begin{split}
	\mathcal N(d, r_2) = \phi(r_2)&\sum_{\substack{a \\ (a,dr_2)=1}} \mu(a) \sum_{\substack{\ell \\ (\ell, dr_2)=1 \\ a\ell \textup{ squarefree}}} \ell \, \Psi\Big(\frac{a\ell r_2d}{Q}\Big) \sum_{(m,q)=1}\frac{1}{\sqrt{m}}W\Big(\frac{m}{q^{1+\eta}}\Big)\\
	&\times\sum_{\pm}\sum_{\substack{n\not=m\\ n\equiv \pm m \bmod a\ell r_2\\ (n, q)=1}}\frac{A_F(1,n)}{\sqrt{n}}V\Big(\frac{n}{q^{2-\delta}}\Big),
	\end{split}
\end{equation}
where now $q = a\ell r_2 d$.  The condition that $a\ell$ is squarefree is equivalent to $a$ squarefree, $\ell$ squarefree and $(a,\ell)=1$.  Keeping the first and third conditions, we detect the squarefree condition on $\ell$ by M\"obius inversion, writing $\ell = c^2 \ell'$ so that
\begin{align}
	\mathcal N(d, r_2) = \phi(r_2)\sum_{\substack{a \textup{ squarefree}\\ (a,dr_2)=1}} \mu(a) \sum_{\substack{c \\ (c, adr_2)=1}} \mu(c) c^2 \sum_{\substack{\ell' \\ (\ell', adr_2)=1}} \ell' \, \Psi\Big(\frac{ac^2\ell' r_2d}{Q}\Big) \\
	\times \sum_{(m,q)=1}\frac{1}{\sqrt{m}}W\Big(\frac{m}{q^{1+\eta}}\Big)\sum_{\pm}\sum_{\substack{n\not=m\\ n\equiv \pm m \bmod ac^2\ell' r_2\\ (n, q)=1}}\frac{A_F(1,n)}{\sqrt{n}}V\Big(\frac{n}{q^{2-\delta}}\Big)
\end{align}
with $q = ac^2\ell' r_2 d$.  We now write
\begin{align}\label{eqn:calGR}
	\mathcal N =: \mathcal G + \mathcal S
\end{align}
where $\mathcal G$ is the contribution of $adc \le R$ and $\mathcal S$ is the contribution of $adc > R$.  We estimate $\mathcal S$ in Lemma \ref{lem:calS} below and $\mathcal G$ in Lemma \ref{lem:calG}
\begin{remark}
	In previous works on the sixth and eighth moments \cite{CIS, CL8th, CLMR6th, CLMR8th}, the definition of $\mathcal G$ and $\mathcal S$ is simpler and only involves $d$.  In these works, the contributions of the principal character from both $\mathcal G$ and $\mathcal S$ are very large, and need to be canceled for the final result.  In our work, the contribution of the principal character is small, since $F$ is a cusp form.  
\end{remark}
\begin{lemma}\label{lem:calS}
	With notation as above in section \ref{setupG}, we have
	$$\mathcal S \ll \frac{Q^{2+\varepsilon}}{R} + Q^{7/4+\varepsilon}Q_2^{1/2}
	$$
\end{lemma}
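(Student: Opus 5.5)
The plan is to deduce the bound for $\mathcal S$ directly from Lemma \ref{lem:truncationcalc}. We fix the outer variables $q_2$, the factorisation $d_2r_2=q_2$, and $d,a,c$ (with $d_2\mid d$ and $adc>R$), and treat the remaining sum over $\ell'$ as a sum over the modulus $k$. The factors $\mu(d)\mu(a)\mu(c)$ are constant once these variables are fixed, so it suffices to bound each inner sum in absolute value. After splitting $\Psi$ into real and imaginary parts, and each of these into nonnegative pieces if needed, we may take $\Psi\ge 0$.

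\textbf{Applying Lemma \ref{lem:truncationcalc}.} For fixed $a,c,d,r_2$ we set the modulus $k=ac^2\ell' r_2$, which is exactly $r$. We apply the lemma with $d$ as given and with divisibility parameter $\ell_{\rm lemma}=ac^2r_2$. The weight is $\rho(k)=\phi(r_2)\,\ell'\,\Psi(kd/Q)$, and $\ell'$ ranges over the subset allowed by the condition $(\ell',adr_2)=1$. This subset is permitted by the $\sumflat$ in the lemma. We also take $K\asymp Q/d$, $M=(kd)^{1+\eta}$ and $N=(kd)^{2-\delta}$; the lemma allows $M,N$ to depend on $k$ and $d$. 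Since $(\ell', ac^2 r_2)=1$, we have
\[
\phi(k)=\phi(ac^2)\phi(\ell')\phi(r_2)\gg Q^{-\varepsilon}\,ac^2\,\ell'\,\phi(r_2).
\]
Hence $\rho(k)\ll \phi(k)/A$ with $A=Q^{-\varepsilon}ac^2$. Verifying this hypothesis of the lemma, including the $Q^{\varepsilon}$ losses from $k/\phi(k)$, is the only real point to check; everything else is bookkeeping. The lemma then gives, for each fixed tuple, the bound
\[
Q^{\varepsilon}\Big(\frac{Q^2}{d^2a^2c^4r_2}+\frac{Q^{3/2+\eta}}{d\,a^2c^4r_2}+\frac{Q^{7/4}}{d^{7/4}a^{3/2}c^{3}r_2^{1/2}}\Big).
\]

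\textbf{Summing over the outer variables.} We write $d=d_2d_1$ and sum over $q_2\sim Q_2$, $d_2r_2=q_2$, and $d_1,a,c$ subject to $ad_2d_1c>R$.

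For the first term, the sum over $q_2$ of $1/(d_2r_2)$ is $\ll Q^\varepsilon$. The remaining sum of $1/(d_2d_1a^2c^4)$ over tuples with $ad_2d_1c>R$ is $\ll Q^\varepsilon/R$, because the weight decays like $x^{-2}$ in the product $x=ad_2d_1c>R$ (up to divisor-function losses). This yields $Q^{2+\varepsilon}/R$.

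The second term is at most $Q^{3/2+\eta+\varepsilon}$ after summation, which is negligible once $\eta$ is small.

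For the third term we drop the constraint $adc>R$. The sums over $d_1,a,c$ converge, and
\[
\sum_{d_2}\ \sum_{r_2\ll Q_2/d_2} d_2^{-7/4}r_2^{-1/2}\ll \sum_{d_2} d_2^{-7/4}(Q_2/d_2)^{1/2}\ll Q_2^{1/2}.
\]
This produces $Q^{7/4+\varepsilon}Q_2^{1/2}$ and completes the bound $\mathcal S\ll Q^{2+\varepsilon}/R+Q^{7/4+\varepsilon}Q_2^{1/2}$ claimed in Lemma \ref{lem:calS}.
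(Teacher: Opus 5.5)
Your route is the paper's: apply Lemma \ref{lem:truncationcalc} to the $\ell'$-sum with modulus $k=ac^2\ell'r_2$, divisibility parameter $ac^2r_2$ and $K\asymp Q/d$, then sum over $r_2,d,a,c$ with $adc>R$. There is one bookkeeping slip to fix. The summand of $\mathcal N(d,r_2)$ carries $\mu(c)c^2$, not just $\mu(c)$. Your weight $\rho(k)=\phi(r_2)\ell'\Psi(kd/Q)$ drops the $c^2$, and you absorb $c^2$ into $A=Q^{-\varepsilon}ac^2$ without ever multiplying it back. The correct per-tuple bound is
\[
Q^{\varepsilon}\Big(\frac{Q^2}{d^2a^2c^2r_2}+\frac{Q^{3/2+\eta}}{d\,a^2c^2r_2}+\frac{Q^{7/4}}{d^{7/4}a^{3/2}c\,r_2^{1/2}}\Big).
\]
You can get it either by multiplying your bound by $c^2$, or by taking $\rho(k)=c^2\ell'\phi(r_2)$ and $A\asymp aQ^{-\varepsilon}$ as the paper does. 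Each denominator therefore has two fewer powers of $c$ than you wrote. The conclusion survives. In the third term the $c$-sum becomes $\sum_{c\ll Q}c^{-1}\ll\log Q$ rather than convergent. In the first term, $\sum_{adc>R}(adc)^{-2}\ll Q^{\varepsilon}/R$ still holds.

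Two smaller points in the summation. In the first term, after pulling out $1/(d_2r_2)$, the remaining weight is $1/(d_2d_1^2a^2c^2)$, not $1/(d_2d_1a^2c^4)$. With $d_1$ to the first power the claimed $Q^\varepsilon/R$ would actually fail, because of a $\log$ from $\sum_{d_1>R}1/d_1$. Also, $d_2$ enters only to the first power, so ``decays like $x^{-2}$'' is not literally true. The bound $\ll Q^\varepsilon/R$, uniform in $d_2$, follows by treating $d_2>R$ and $d_2\le R$ separately.

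It is cleaner to argue as the paper does. For given $(d,r_2)$ there are at most $\tau(d)$ admissible pairs $(q_2,d_2)$. So you can sum $1/r_2$ over $r_2\ll Q_2$ and $(dac)^{-2}$ over $dac>R$ directly, without splitting $d=d_2d_1$.

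Finally, $(\ell',c)=1$ is not imposed by the M\"obius detection of squarefreeness, so $\phi(k)=\phi(ac^2)\phi(\ell')\phi(r_2)$ can fail. The needed hypothesis on $\rho$ still follows from $\phi(k)\gg Q^{-\varepsilon}k$.
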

for any $\varepsilon>0$
\begin{lemma}\label{lem:calG}
	With the notation as above in section \ref{setupG}, we have 
	\begin{align}
		\mathcal G\ll Q^\varepsilon\Big(Q^{2-\delta}Q_2 R+Q^{7/4-3\delta/4}Q_2^{3/2} R^{3/2}+Q^{3/2+\varepsilon}\Big)
	\end{align}
	for any $\varepsilon>0$.
\end{lemma}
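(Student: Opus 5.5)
We would treat $\mathcal G$ with the switching trick of the asymptotic large sieve \cite{CIS}. In $\mathcal G$ the moduli $ac^2\ell' r_2$ are large (of size $\asymp Q/d$), while the variables $a,c,d$ are small since $adc\le R$. The congruence $n\equiv \pm m \bmod ac^2\ell' r_2$ with $n\neq m$ means $n\mp m = ac^2\ell' r_2 h$ for some integer $h\neq 0$. The weights $W,V$ essentially restrict to $m\ll Q^{1+\eta+\varepsilon}$ and $n\ll Q^{2-\delta+\varepsilon}$, and $\ell'\asymp Q/(ac^2r_2d)$. Hence
$$|h|\ll \frac{Q^{2-\delta+\varepsilon}}{ac^2r_2\ell'}\ll Q^{1-\delta+\varepsilon}d.$$
We then interchange the roles of $\ell'$ and $h$. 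We fix $a,c,d,r_2$ and sum over $k:=ac^2r_2|h|$, imposing $n\equiv\pm m\bmod k$. The complementary variable $\ell'=|n\mp m|/k$ is then determined by $n,m,k$. The moduli $k$ satisfy $\ell\,|\,k$ with $\ell:=ac^2r_2$ and $k\le K\ll Q^{1-\delta+\varepsilon}ac^2r_2d$.

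The main technical step is to put the resulting expression into the shape of Lemma \ref{lem:truncationcalc}. After switching, the weight $\ell'\Psi(ac^2\ell' r_2d/Q)$, the arguments $q=ac^2\ell'r_2d$ inside $W(m/q^{1+\eta})$ and $V(n/q^{2-\delta})$, and the coprimality conditions $(\ell',adr_2)=1$ and $(mn,q)=1$ all depend on $\ell'=|n\mp m|/k$.
\begin{itemize}
\item We separate variables by Mellin inversion. We write $\Psi(x)$ and the $q$-dependence of $W,V$ as Mellin integrals, so that $\ell'^{s}=|n\mp m|^{s}k^{-s}$. Since $m$ is much smaller than $n$ in the relevant ranges (or after a smooth dyadic partition of unity in the ratio $m/n$), we expand $|n\mp m|^s$ into a function of $n$ and $m$ separately, for example by the binomial expansion of $(1\mp m/n)^s$ or a further Mellin transform.
\item We remove the coprimality conditions on $\ell'$ by M\"obius inversion, which turns them into additional divisibility constraints on $k$ and on $n\mp m$. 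These are harmless because they only involve divisors of the small quantities $adr_2$.
\end{itemize}
With the variables separated, the coefficient of $k$ becomes $\rho(k)\ll (Q/(ac^2r_2d))\cdot k^{-0}$. Using $\phi(k)\gg k^{1-\varepsilon}$ and $k\gg ac^2r_2$, this gives $\rho(k)\ll \phi(k)/A$ with
$$A\asymp \frac{K\,ac^2r_2 d}{Q}\cdot Q^{-\varepsilon}\cdot(\text{normalisation}),$$
after factoring out the overall size $Q/(ac^2r_2d)$. The $m$- and $n$-sums then have exactly the form required in $\mathcal T$.

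Applying Lemma \ref{lem:truncationcalc} with $\ell=ac^2r_2$, $K\asymp Q^{1-\delta}ac^2r_2d$, $M\asymp Q^{1+\eta}$, and the prefactor $\phi(r_2)\,Q/(ac^2r_2d)$, we get three contributions.
\begin{itemize}
\item The term $K^2/\ell$ gives roughly $\phi(r_2)\frac{Q}{ac^2r_2d}\cdot\frac{Q^{2-2\delta}a^2c^4r_2^2d^2}{ac^2r_2}\cdot Q^{-1+\delta}$ after normalisation. Summed over $d,a,c$ with $adc\le R$ and over $r_2\,|\,q_2\sim Q_2$, this should produce $Q^{2-\delta}Q_2R$.
\item The term $K^{7/4}/\sqrt{\ell}$ similarly gives $Q^{7/4-3\delta/4}Q_2^{3/2}R^{3/2}$.
\item The principal-character term $K\sqrt M/\ell$ gives $Q^{3/2+\varepsilon}$.
\end{itemize}
We expect the main obstacle to be the variable separation in the previous paragraph. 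We must keep the Mellin variables on lines where all the integrals converge absolutely with only $Q^\varepsilon$ loss, and handle the region $m\asymp n$, where $|n\mp m|$ is not smooth in $n/m$; there we would use the trivial bound on a short range. The exponent bookkeeping is routine once the reduction to Lemma \ref{lem:truncationcalc} is in place.
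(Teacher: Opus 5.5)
Your divisor switch (trading $\ell'$ for the complementary divisor $h$, with new modulus $ac^2r_2h$ and $|h|\ll Q^{1-\delta+\varepsilon}d$) is the same idea as in the paper. The reduction to Lemma \ref{lem:truncationcalc}, which you flag yourself as the main obstacle, does not go through, and the fallback you propose fails.

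That lemma is proved only for the fixed weights $W(m/M)V(n/N)$. Their Mellin transforms decay exponentially, so only heights $|\Im s|\ll Q^{\varepsilon}$ enter and the ordinary large sieve suffices. After switching, $\ell'$, $\Psi(q/Q)$, $W(m/q^{1+\eta})$ and $V(n/q^{2-\delta})$ all depend on $q=d|n\mp m|/h$, i.e.\ on $n-m$ at scale $Qh/d$. It is not true that $m$ is much smaller than $n$, since $V$ only caps $n$. For $m\asymp n\le Q^{1+\eta}$, which forces $h\lesssim dQ^{\eta}$, the weight varies on scale $\asymp h/(dQ^{\eta})$ in the ratio $n/m$. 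Separating $m$ from $n$ therefore needs Mellin integrals up to height $\asymp dQ^{\eta}/h$; this is the paper's $S_0=\max(1,D/(gh))Q^{\varepsilon}$.

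These terms cannot be bounded trivially. Already for $a=c=1$, each $h\le d$ and each $m\in[Qh/d,Q^{1+\eta}]$ admit $\asymp Q/(r_2d)$ values $n\asymp m$, each of weight $\asymp\phi(r_2)\ell'/m\asymp Q/(dm)$. Summing over $h\le d$, over $d$, and over $r_2\mid q_2\sim Q_2$ gives a trivial bound of size $Q^2$ up to logarithms, which is the size of the main term. A trivial bound is affordable only on $h\le dQ^{-\kappa}$ for some fixed $\kappa>0$. On the remaining range the heights reach $Q^{\kappa+\eta}$, which Lemma \ref{lem:truncationcalc} does not cover. Running its non-hybrid argument at each height, without further information on the Mellin transform, loses a factor of the height.

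The missing ingredient is what the paper uses instead of quoting Lemma \ref{lem:truncationcalc}. It takes the joint Mellin transform $\widetilde{\mathcal W}^{\pm}_2(s_1,s_2;u)$, with $u=d/(Qgh)$, of the non-separable weight. Lemma \ref{lem:MellinXY} shows that this transform decays like $1/|s_1|$ with no loss, is negligible for $|s_1|\gg(1+uQ^{1+\eta})Q^{\varepsilon}$, and decays rapidly in $|s_1+s_2|$. The paper then applies the hybrid large sieve with divisibility (Lemma \ref{HybridLargeSieve}) over $|t|\sim S\ll D/(gH)$. The factor $1/S$ from the Mellin decay cancels the length $S$ in the leading term $E^2S/\lambda$. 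Large heights occur only for $H\lesssim D/g$, where the moduli $e\ll HQ_2A\ell/b$ are few. In effect, the paper re-proves the lemma in hybrid form.

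There is a second gap, in your treatment of coprimality. The condition $(m,q)=(n,q)=1$ contains $(mn,\ell')=1$ for the switched-out variable $\ell'$. Since $\ell'\mid n\mp m$, this is $(\gcd(m,n),\ell')=1$, which involves the unbounded $\gcd(m,n)$, not only divisors of $adr_2$. Moreover, pairs with $(m,h)>1$ are not coprime to the new modulus, so the congruence cannot be detected by characters as in Lemma \ref{lem:truncationcalc}. The paper handles this in three steps:
\begin{itemize}
\item it writes $m=gM$, $n=gN$ with $(M,N)=1$, so coprimality to the new modulus is automatic;
\item it applies M\"obius inversion over $b\mid adr_2g$;
\item it uses Lemma \ref{lem:generatingfunction}, i.e.\ Hecke relations for $A_F(1,gN)$, with $\mathcal F\ll g^{\vartheta}Q^{\varepsilon}$ and $\vartheta<1/2$ so that the $g$-sum converges.
\end{itemize}
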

Note that Proposition \ref{prop:G} follows easily from Lemma \ref{lem:calS} and Lemma \ref{lem:calG}.
\subsection{Estimates for $\mathcal S$}
We consider $\mathcal S$ in this section and prove Lemma \ref{lem:calS}.
\begin{proof}[Proof of Lemma \ref{lem:calS}]
  We apply Lemma \ref{lem:truncationcalc} where the modulus is $k = ac^2\ell' r_2$ and the weight function is $\rho(k) = c^2 \ell'\phi(r_2)$, which satisfies
	$$\rho(k) = c^2\ell' \phi(r_2)\ll Q^\varepsilon \frac{\phi(k)}{a}.$$
	Hence we can apply Lemma \ref{lem:truncationcalc} to the $\ell'$-sum with $K = Q/d$, $\ell = ac^2r_2$, $M = q^{1+\eta}$, $N = q^{2-\delta}$, $q\asymp Q$ and $A =Q^\varepsilon/a$ getting 
	\begin{align}
		&\phi(r_2) c^2 \sum_{\substack{\ell' \\ (\ell', adr_2)=1}} \ell' \, \Psi\Big(\frac{ac^2\ell' r_2d}{Q}\Big) \\
		\times &\sum_{(m,q)=1}\frac{1}{\sqrt{m}}W\Big(\frac{m}{q^{1+\eta}}\Big)\sum_{\pm}\sum_{\substack{n\not=m\\ n\equiv \pm m \bmod ac^2\ell' r_2\\ (n, q)=1}}\frac{A_F(1,n)}{\sqrt{n}}V\Big(\frac{n}{q^{2-\delta}}\Big)\\
        \ll& Q^\varepsilon\Big(\frac{Q^2}{d^2 a^2c^2r_2} + \frac{Q^{3/2}}{d a^2c^2 r_2} + \frac{Q^{7/4}}{d ^{7/4}a^{3/2} c\sqrt{r_2}}\Big).
	\end{align}
	Now write $Q\gg r = dac \ge R$.  Summing over $r$ and $r_2 \ll Q_2$, we have
	\begin{align}
		\mathcal S \ll \sum_{r_2 \ll Q_2} Q^\varepsilon\left( \frac{Q^{2}}{R r_2} + \frac{Q^{7/4}}{\sqrt{r_2}}\right) \ll \frac{Q^{2+\varepsilon}}{R} + Q^{7/4+\varepsilon}Q_2^{1/2},
	\end{align}
	which completes the proof.
\end{proof}

\subsection{Estimates for $\mathcal G$}
In this section, we shall prove Lemma \ref{lem:calG} for $\mathcal G$ using the divisor switching technique from \cite{CIS}. We first set up some further notation.
We write $m = gM$, $n = gN$ with $g = (m,n)$, so that $(M,N)=1$.  Then the congruence condition $n \equiv \pm m \bmod ac^2\ell'r_2$ together with the coprimality conditions is equivalent to (with $q=ac^2\ell'r_2d$)
$$(q, g) = 1, \quad ac^2\ell' r_2 \mid N \pm M, \quad (M,N)=1 \quad\text{and } (MN, d) = 1.$$
by noting that $(ac \ell' r_2 d, MN) = (q, MN) = 1$ is implied by $(d, MN) = 1$, $(M, N) =1$ and $ac^2\ell' r_2 \mid N \pm M$.  Detecting the remaining condition $(\ell', adr_2 g) = 1$ by $\sum_{b \mid (\ell', adr_2g)} \mu(b)$ and writing $\ell' = bk$, we obtain
\begin{equation}\label{Gdef}
	\mathcal G = \frac 12 \sum_{q_2 \sim Q_2} \sum_{\substack{d_2 r_2 = q_2}} \sum_{\substack{(d,r_2)=1\\ d_2|d}}  \mu(d) \mathcal G(d, r_2) 
\end{equation}
where 
\begin{align}\label{eqn:cleC1}
	\mathcal G(d, r_2) &= \phi(r_2)\sum_{\substack{a \textup{ squarefree}\\ (a,dr_2)=1}} \mu(a) \sum_{\substack{c\le C/ad \\ (c, adr_2)=1}} \mu(c) c^2 \sum_{\substack{g \\ (g, acr_2d)=1}} \frac{1}{g} \sum_{\substack{M, N \\ (M,N)=1\\ M\neq N \\ (MN, d) = 1}} \frac{A_F(1,gN)}{\sqrt{MN}} \\
	& \quad \times \sum_{b \mid adr_2g} \mu(b)\, b   \sum_{\substack{k \\ ac^2bkr_2 \mid N \pm M}} k \, \Psi\Big(\frac{ac^2 b k r_2d}{Q}\Big) W\Big(\frac{gM}{q^{1+\eta}}\Big) V\Big(\frac{gN}{q^{2-\delta}}\Big)
\end{align}
with $q = ac^2 b k r_2 d$.
We write $|N \pm M| = ac^2bkhr_2$ and aim to replace the same over $k$ by a sum over the complementary divisor $h$. The motivation here is that we expect $h$ to be smaller than $Q$, which results in a reduction in conductor.
The sum over $k$ leads to the new congruence $N \equiv \mp M \pmod{ac^2 b h r_2}$.  Note that since $k = \frac{|N \pm M|}{ac^2 b h r_2}$, $q = ac^2bkr_2d = \frac{d\,|N \pm M|}{h}$, the sum over $k$ is now
\begin{align}\label{eqn:aftercomplementary1}
&\sum_{\substack{h>0\\ ac^2 b h r_2 \mid N\pm M}}\Big(\frac{|N \pm M|}{ac^2 b h r_2}\Big) \quad \Psi\Big(\frac{d|N\pm M|}{Q h}\Big)W\Big(\frac{gM}{(d|N\pm M|/h)^{1+\eta}}\Big)V\Big(\frac{gN}{(d|N\pm M|/h)^{2-\delta}}\Big),
\end{align}where we have replaced $h$ by $|h|$ when $h<0$.
Denote 
\begin{align}
\mathcal W^\pm(x,y;u)=u|x\pm y|\Psi(u|x\pm y|) W\Big(\frac{x}{(u|x\pm y|Q)^{1+\eta}}\Big)V\Big(\frac{y}{(u|x\pm y|Q )^{2-\delta}}\Big).
\end{align}
Applying this notation in \eqref{eqn:aftercomplementary1} and plugging back into \eqref{eqn:cleC1}, we obtain
\begin{align}\label{eqn:GcleC1}
	\mathcal G(d, r_2) &=  Q \frac{\phi(r_2)}{dr_2} \sum_{\substack{a \textup{ squarefree} \\ (a,dr_2)=1}} \frac{\mu(a)}{a} \sum_{\substack{c\le C/ad \\ (c, adr_2)=1}} \mu(c) \sum_{\substack{g \\ (g, acr_2d)=1}} \frac{1}{g} \notag \\
	& \quad \times \sum_{b \mid adr_2g} \mu(b)  \sum_{\substack{h>0}} \sum_{\substack{M, N \\ (M,N)=1 \\ ac^2 b h r_2 \mid N\pm M\\ M\neq N}} \frac{A_F(1,gN)}{\sqrt{MN}}  \mathcal W^\pm\Big(gM, gN; \frac{d}{Qgh}\Big).
\end{align}

\subsubsection{\textbf{Mellin transforms}}
Define
\begin{align}
	\widetilde{\mathcal W}_2^\pm(s_1,s_2;u)&=\int_0^\infty\int_0^\infty\mathcal W^\pm (x,y;u)x^{s_1}y^{s_2}\frac{dx}{x}\frac{dy}{y}.\\
\end{align}

We now derive some properties of this Mellin transform.  The lemma below are similar in style to \cite[Lemma 5]{CIS} with refinements found in Chandee--Li--Matomaki--Radziwill \cite{CLMR8th}.  

\begin{lemma} \label{lem:MellinXY}
	Given a positive real number $u,$ we define
	$$ \Wt^\pm_2 (s_1, s_2; u) = \int_0^\infty \int_0^\infty \W^\pm(x, y ; u) x^{s_1}y^{s_2} \frac{dx}{x} \frac{dy}{y}.$$
	Then the functions $\Wt^\pm_2(s_1, s_2 ; u)$ are analytic in the region $\tRe s_1, \tRe s_2 > 0$. We have the Mellin inversion formula
	$$ \W^\pm(x, y ; u) = \frac{1}{(2\pi i)^2} \int_{(c_1)}\int_{(c_2)} \Wt^\pm_2 (s_1, s_2 ; u) x^{-s_1} y^{-s_2} \> d s_2 \>d s_1,$$
	when $c_1$ and $c_2$ are positive. For any integers $k,k' \geq 1, l \geq 0$ and any $s_1, s_2 \in \mathbb{C}$ with $0 < \tRe s_1, \tRe s_2 \leq 100$, one has
	\begin{equation}
		\begin{split}
			\label{eq:Wt2bound}
			|\Wt^\pm_2(s_1, s_2; u)| &\ll \frac{1} {\tRe s_1 \tRe s_2 |s_1 + s_2|^l} Q^{(1+\eta)\tRe s_1}u^{-\tRe s_2}\exp(-\kappa(\frac{1}{uQ^{2-\delta}})^{1/3})\\
			& \times \min\Big(\frac{(1+uQ^{1+\eta})^{\tRe s_2+k-1}}{|s_1|^{k}}, \frac{(1+uQ^{1+\eta})^{\tRe s_2+k'-1}}{|s_2|^{k'}}\Big)
		\end{split}
	\end{equation}
	for some constant $\kappa> 0$.
\end{lemma}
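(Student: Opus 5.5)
The plan is the standard integration-by-parts analysis from \cite[Lemma 5]{CIS} and \cite{CLMR8th}, adapted to the weights $W,V$ of \eqref{eqn:VWbdd}.

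\textbf{Support and analyticity.} Since $\Psi$ is supported in $[1,2]$, $\mathcal W^\pm(x,y;u)$ vanishes unless $1\le u|x\pm y|\le 2$. On this support $|x\pm y|\asymp 1/u$, and the arguments of $W$ and $V$ have denominators $\asymp Q^{1+\eta}$ and $\asymp Q^{2-\delta}$. The decay of $W$ in \eqref{eqn:VWbdd} restricts $x\ll Q^{1+\eta}$ up to exponentially small tails. Together with $|x\pm y|\le 2/u$ this gives $y\ll 1/u+Q^{1+\eta}$. Near $x=0$ or $y=0$ the function is bounded, since $W(0)=V(0)=1$. Hence the double Mellin integral converges absolutely for $\tRe s_1,\tRe s_2>0$, giving analyticity, and Mellin inversion follows in the usual way.

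\textbf{Trivial bound.} Bounding $|\W^\pm|\ll 1$ on its support and integrating $x^{\tRe s_1-1}$ over $x\ll Q^{1+\eta}$ produces the factors $Q^{(1+\eta)\tRe s_1}/\tRe s_1$. Integrating $y^{\tRe s_2-1}$ over the allowed $y$-range gives $(1+uQ^{1+\eta})^{\tRe s_2}u^{-\tRe s_2}/\tRe s_2$, using $1/u+Q^{1+\eta}=u^{-1}(1+uQ^{1+\eta})$. The factor $\exp(-\kappa(uQ^{2-\delta})^{-1/3})$ comes from the $V$ decay. Indeed, the argument of $V$ is $y/(u|x\pm y|Q)^{2-\delta}\gg y$, and on the support $y\gg 1/u$ when $x$ is small. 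More precisely, when $u$ is small one has $y\gtrsim 1/u$ and $u|x\pm y|Q\asymp Q$, so the argument is $\gtrsim 1/(uQ^{2-\delta})$, and \eqref{eqn:VWbdd} gives $\exp(-c(\cdot)^{1/3})$.

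\textbf{Derivative factors.} To get the $|s_1|^{-k}$ or $|s_2|^{-k'}$ factors, integrate by parts $k$ times in $x$ (or $k'$ times in $y$), using $x^{s_1}=\frac{1}{s_1}x\partial_x x^{s_1}$ repeatedly. Each application of $x\partial_x$ to $\W^\pm$ is controlled by \eqref{eqn:VWbdd} for $W,V$, but $\Psi(u|x\pm y|)$ and $u|x\pm y|$ contribute $ux\,\Psi'$-type terms of size $\ll ux\ll 1+uQ^{1+\eta}$. For the $y$-derivatives one gets $uy\ll 1+uQ^{1+\eta}$ similarly. This explains the powers $(1+uQ^{1+\eta})^{k-1}$; one power is saved because the first derivative can be arranged to fall on a variable whose contribution is already counted in the volume estimate. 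I would verify this bookkeeping following \cite{CLMR8th}.

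\textbf{The $|s_1+s_2|^{-l}$ factor.} This is the main obstacle. Here I would use the homogeneous operator $x\partial_x+y\partial_y$, which kills the dependence of $u|x\pm y|$ and $\Psi(u|x\pm y|)$ up to scaling. Concretely, change variables $x=\lambda t$, $y=\lambda(1-t)$ or the polar analogue, and integrate by parts in $\lambda$. The $\lambda$-derivatives then fall on $\Psi(u\lambda|\cdot|)$, whose $\lambda\partial_\lambda$ derivatives are bounded uniformly, and on $W,V$, which are controlled by \eqref{eqn:VWbdd}. The care needed is to check that these extra integrations by parts do not spoil the exponential factor or the $\min$ term, and that the weak $y^{1/21}$ behavior near $0$ does not produce divergence for small $\tRe s$.
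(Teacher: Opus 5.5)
Your overall structure matches the paper's. You integrate by parts $k$ times in $x$ (or $k'$ times in $y$), then pass to a scaling variable and integrate by parts $l$ times there. The paper uses $x=w$, $y=wz$, which is the same operator $x\partial_x+y\partial_y$ as your $x=\lambda t$, $y=\lambda(1-t)$; it also uses $w\frac{d}{dw}(1+uw)^k=k\frac{uw}{1+uw}(1+uw)^k$.

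The gap is the exponent of $1+uQ^{1+\eta}$. After the integrations by parts you must bound $\int\!\!\int x^{\tRe s_1}y^{\tRe s_2}(1+ux)^k|W_3^\pm(x,y;u)|\frac{dx}{x}\frac{dy}{y}$, where $W_3^\pm$ has the same support and decay as $\W^\pm$. Suppose you proceed as in your trivial-bound step: integrate $y^{\tRe s_2-1}$ over the full range $y\ll 1/u+Q^{1+\eta}$ and use $(1+ux)^k\ll(1+uQ^{1+\eta})^k$. Then you get $(1+uQ^{1+\eta})^{\tRe s_2+k}$, a full power more than claimed. Your stated reason for saving that power (the first derivative ``falling on a variable already counted in the volume estimate'') is not an actual mechanism. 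The saving does not come from the derivatives at all; it comes from the measure of the support.

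The missing idea is that the support $1\le u|x\pm y|\le 2$ is a strip of width $\asymp 1/u$. Split at $x=10/u$.

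For $x\le 10/u$, the support forces $y\le 12/u$, so $(1+ux)^k$ and $(1+uy)^{k'}$ are $O(1)$. Since also $\max(x,y)\ge 1/(2u)$, the decay of $W$ or of $V$ supplies the factor $\exp(-\kappa(uQ^{2-\delta})^{-1/3})$. This part is therefore $\ll (\tRe s_1\tRe s_2)^{-1}Q^{(1+\eta)\tRe s_1}u^{-\tRe s_2}\exp(-\kappa(uQ^{2-\delta})^{-1/3})$.

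For $x>10/u$, only the minus sign survives. The condition $|x-y|\in[1/u,2/u]$ confines $y$ to a set of measure at most $2/u$ on which $y\asymp x$. Thus the $y$-integral is $\ll x^{\tRe s_2-1}/u$ rather than $x^{\tRe s_2}/\tRe s_2$. This gain of $(ux)^{-1}$ cancels one factor of $(1+ux)\asymp ux$. What remains is $u^{k-1}\int_{10/u}^\infty x^{\tRe s_1+\tRe s_2+k-2}e^{-\kappa x/Q^{1+\eta}}\,dx\ll Q^{(1+\eta)(\tRe s_1+\tRe s_2)}(uQ^{1+\eta})^{k-1}/\tRe s_1$. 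This fits the claimed bound because $Q^{(1+\eta)\tRe s_2}\le u^{-\tRe s_2}(1+uQ^{1+\eta})^{\tRe s_2}$. The same argument handles $(1+uy)^{k'}$.

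This is not cosmetic. The estimate for $\mathcal E$ uses \eqref{eqn:mellinbdd} with $k=1$, where the lemma gives $|s_1|^{-1}$ with no power of $uQ^{1+\eta}\asymp Q^\eta d/(gh)$. Your bookkeeping would lose a factor of size $\max(1,D/(gH))$ there.
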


To guide the reader, note that $\mathcal W^\pm (x,y;u)$ is $uQ^{1+\eta}$ inert in $x$ and $y$ (see e.g. \cite{KPY} for the definition of inert), and so the decay in $s_1$ and $s_2$ is to be expected.  The decay in $|s_1+s_2|$ optimizes our eventual dependence on $D$, and follows similar steps to the refinement in \cite{CLMR8th}. 

\begin{proof}
	Apart from~\eqref{eq:Wt2bound} the claims go back to~\cite{CIS} (and are easy to prove). 
	The proof is based on partial integration, and for this we first study the derivatives of $\W^\pm(x, y ; u)$. 
	
	To simplify the notation, we write in this proof $V_1, V_2, \dotsc$ for unspecified functions $V_j \colon \mathbb{R}_+^3 \to \mathbb{C}$ that satisfy
	\begin{align}\label{Vjbound}
		x^{\nu_1}y^{\nu_2}z^{\nu_3}\frac{d^{\nu_1+\nu_2+\nu_3}}{dx^{\nu_1}d y^{\nu_2}dz^{\nu_3}}V_j(x,y,z)\ll \exp(-\kappa\frac{x}{Q^{1+\eta}}- \kappa(\frac{y}{Q^{2-\delta}})^{1/3})
	\end{align} and $W_1^\pm, W_2^\pm, \dotsc$ for unspecified functions $W_j^\pm \colon \mathbb{R}_+^3 \to \mathbb{C}$ that are of the form $W_j^\pm(x, y; u) = u|x\pm y| \Psi(u|x \pm y|) V_j(x, y; u |x\pm y|)$ for some such $V_j$. We then follow the derivative calucations in \cite[Lemma 6.3]{CLMR8th}.
	By \eqref{Vjbound} and the chain rule
	\[
	\frac{d}{dx} V(x, y; u |x\pm y|) = \frac{1}{x} V_1(x, y; u |x \pm y|) + u \cdot \frac{1}{u|x\pm y|} V_2(x, y; u |x \pm y|),
	\]
	we have that in the region $u |x \pm y| \in [1, 2]$ and for any $j \geq 0$
	\[
	\frac{d^j}{dx^j} V(x, y; u |x\pm y|) = \left(\frac{1}{x^j} + u^j\right) V_3(x, y; u |x \pm y|)
	\]
	and
	\[
	\frac{d^j}{dx^j} \W^\pm(x, y; u |x\pm y|) = \left(\frac{1}{x^j} + u^j\right) W_1(x, y; u |x \pm y|).
	\]
	Hence by partial integration $k$ times, we see that 
	\[
	|\Wt^\pm_2(s_1, s_2; u)| \ll \frac{1}{|s_1|^{k}} \left|\int_0^\infty \int_0^\infty (1+ux)^{k} W^\pm_1(x, y ; u) x^{s_1}y^{s_2} \frac{dx}{x} \frac{dy}{y}\right|,
	\]
	Now we substitute $x = w$ and $y = wz$, so that
	\begin{equation}
		\label{eq:Wt2bound1}
		|\Wt^\pm_2(s_1, s_2; u)| \ll \frac{1}{|s_1|^{k}} \left|\int_0^\infty \int_0^\infty (1+uw)^{k} W^\pm_1(w, wz ; u) w^{s_1+s_2} z^{s_2} \frac{dw}{w} \frac{dz}{z}\right|.
	\end{equation}
	
	Next we perform partial integration with respect to $w$. Note that, by \eqref{Vjbound} and the chain rule, we have
	\[
	\begin{split}
		\frac{d}{dw} V(w, wz, uw|1\pm z|) &= \frac{1}{w} V_5(w, wz, uw|1\pm z|) + z \cdot \frac{1}{wz} V_6(w, wz, uw|1\pm z|) \\
		& \qquad + u|1\pm z| \frac{1}{uw|1\pm z|}V_7(w, wz, uw|1\pm z|) \\
		&= \frac{1}{w} V_8(w, wz, uw|1\pm z|).
	\end{split}
	\]
	Note also that 
	\[
	\frac{d}{dw} (1+uw)^k = \frac{1}{w} (1+uw)^k \cdot k \frac{uw}{1+uw}.
	\]
	Using similar bounds for higher order derivatives and considering other terms as well, we see that 
	\[
	\frac{d^l}{dw^l} \left((1+uw)^{k} W^\pm_1(w, wz ; u)\right) = \frac{1}{w^l} (1+uw)^{k} W^\pm_2(w, wz ; u).
	\]
	Hence, by applying partial integration $l$ times to the right hand side of~\eqref{eq:Wt2bound1}, we obtain 
	\[
	\begin{split}
		|\Wt^\pm_2(s_1, s_2; u)| &\ll \frac{1}{|s_1|^k |s_1+s_2|^l} \Bigl| \int_0^\infty \int_0^\infty w^{s_1+s_2} z^{s_2} (1+uw)^{k} W^\pm_3(w, wz ; u) \frac{dz}{z} \frac{dw}{w}\Bigr|
	\end{split}
	\]
	Substituting back $w = x$ and $z = y/x$, we see that the above integral is
	\begin{equation}
		\label{eq:W2estint}
		\int_0^\infty \int_0^\infty x^{s_1} y^{s_2} (1+ux)^{k} W^\pm_3(x, y; u) \frac{dy}{y} \frac{dx}{x}.
	\end{equation}
	Similarly, we have 
	\begin{equation}
		\label{eq:W2estint2}
		|\Wt^\pm_2(s_1, s_2; u)| \ll \frac{1}{|s_2|^{k'}|s_1+s_2|^l}	\int_0^\infty \int_0^\infty x^{s_1} y^{s_2} (1+uy)^{k'} W^\pm_3(x, y; u) \frac{dy}{y} \frac{dx}{x}.
	\end{equation}
	Recall that $k \geq 1$ and $\tRe s_i > 0$ and that $W^\pm_3(x, y; u)$ is supported on $u|x \pm y| \in [1, 2]$ and satisfies $W^\pm_3(x, y; u) \ll \exp(-\kappa\frac{x}{Q^{1+\eta}}- \kappa(\frac{y}{Q^{2-\delta}})^{1/3})$ some $c>0$.
	
	In the part of~\eqref{eq:W2estint} with $x > 10/u$, we have $y \asymp x$ and $y$ is restricted to an interval of length $1/u$ (depending on $x$). Hence the contribution of $x > 10/u$ to~\eqref{eq:W2estint} is bounded by
	\[
	\begin{split}
		&u^{k-1} \int_{10/u}^\infty x^{\tRe s_1 + \tRe s_2} x^{k} \exp(-\kappa\frac{x}{Q^{1+\eta}}- \kappa(\frac{x}{Q^{2-\delta}})^{1/3}) \frac{dx}{x^2} \\
		& \ll u^{k-1} \frac{1}{\tRe s_1 + \tRe s_2 + k -1} \left(\frac{1}{u}  + Q^{1+\eta}\right)^{{\tRe s_1 + \tRe s_2 + k -1}} \exp(-\kappa\frac{1}{uQ^{1+\eta}}- \kappa(\frac{1}{uQ^{2-\delta}})^{1/3})\\
		& \ll  \frac{Q^{(1+\eta){(\tRe s_1 + \tRe s_2})}(1+uQ^{1+\eta})^{k-1}}{\tRe s_1 \cdot \tRe s_2 }\exp(-\kappa\frac{1}{uQ^{1+\eta}}).
	\end{split}
	\]
	The same bound applies to the contributions in \eqref{eq:W2estint2} with $y>10/u$ since the contribution is negligible unless $y\asymp x\ll Q^{1+\eta}$. 
	
	In the part of~\eqref{eq:W2estint} with $x \leq 10/u$, we have $y \leq 12/u$ and $\max\{x, y\} \geq 1/(2u)$ (since $|x \pm y| \in [1/u, 2/u]$), so the contribution of this part is bounded by
	\[
	\begin{split}
		&\int_0^{10/u} \int_0^{12/u} \mathbf{1}_{\max\{x, y\} \geq 1/(2u)} x^{\tRe s_1} y^{\tRe s_2} \left|W^\pm_3(x, y; u)\right| \frac{dx}{x} \frac{dy}{y} \\
		&\ll \frac{1}{\tRe s_1} \frac{1}{\tRe s_2} \min(\frac{1}{u},Q^{1+\eta})^{\tRe s_1}\min (\frac{1}{u}, Q^{2-\delta}) ^{ \tRe s_2} \exp(- \kappa(\frac{1}{uQ^{2-\delta}})^{1/3}) \\&\ll \frac{1}{\tRe s_1} \frac{1}{\tRe s_2}Q^{(1+\eta)\tRe s_1}u^{-\tRe s_2}\exp(- \kappa(\frac{1}{uQ^{2-\delta}})^{1/3}).
	\end{split}
	\]
	Thus we have 
	\begin{align}
		\widetilde{\mathcal W}_2^\pm (s_1, s_2;u)\ll \frac{Q^{(1+\eta)\tRe s_1}u^{-\tRe s_2}(1+uQ^{1+\eta})^{k-1+\tRe s_2}}{\tRe s_1\tRe s_2|s_1|^k|s_1+s_2|^l} \exp (-\kappa (\frac{1}{uQ^{2-\delta}})^{1/3}).
	\end{align}
	Similarly using \eqref{eq:W2estint2}, we have 
	\begin{align}
		\widetilde{\mathcal W}_2^\pm (s_1, s_2;u)\ll  \frac{Q^{(1+\eta)\tRe s_1}u^{-\tRe s_2}(1+uQ^{1+\eta})^{k'-1+\tRe s_2}}{\tRe s_1\tRe s_2|s_2|^{k'}|s_1+s_2|^l} \exp (-\kappa (\frac{1}{uQ^{2-\delta}})^{1/3}).	\end{align}
	The lemma follows from these two estimates. 
\end{proof}

\subsubsection{\textbf{Expression for $\mathcal G$ after divisor switching}}
Using \eqref{Gdef} and \eqref{eqn:GcleC1}, we see that 

\begin{align}\label{eqn:calGcleCcontribution}
	\mathcal G &= \frac{Q}{2} \sum_{d_2 \ll Q_2} \mu^2(d_2)\sum_{\substack{(r_2, d_2)=1\\r_2 \sim Q_2/d_2}} \sum_{\substack{(d, r_2)=1 \\ d_2|d}} \frac{\mu(d) \ \phi(r_2)}{dr_2}\sum_{\substack{a \textup{ squarefree}\\ (a,dr_2)=1}} \frac{\mu(a)}{a} \sum_{\substack{c\le C \\ (c, adr_2)=1}} \mu(c) \sum_{\substack{g \\ (g, acr_2d)=1}} \frac{1}{g}  \\
	& \quad \times \sum_{b \mid adr_2g} \mu(b) \sum_{\substack{h>0}} \sum_{\substack{M, N \\ (M,N)=1 \\ ac^2br_2h \mid N\pm M\\ (MN, d) = 1}} \frac{A_F(1, gN)}{\sqrt{MN}}  \mathcal W^\pm\Big(gM, gN; \frac{d}{Qgh}\Big).
\end{align}

To prepare for the next steps, we restrict the sums over $a, c$ and $d$ to dyadic interval $a\sim A$, $c \sim C$ and $d\sim D$ respectively, where we may assume $ACD \ll R$ by the definition of $\mathcal G$.  We further write $k = ar_2$ and by the coprimality conditions, we  write $b = b_d b_g b_k$, where $b_d|d$, $b_g | g$ and $b_k | k$ and we need to bound 
\begin{align}
	\mathcal G(A, C, D)&=\frac{Q}{2}\sum_{d_2\ll Q_2}\mu^2(d_2)\sum_{d_2\mid d}\frac{\mu(d)}{d}\sum_{\substack{k=ar_2\\ (k, d)=(a, r_2)=1\\r_2\sim Q_2/d_2}}\frac{\mu(a)\phi(r_2)}{ar_2}\\&\times\sum_{\substack{c\leq C\\ (c, kd)=1}}\mu(c)\sum_{\substack{g\\ (g, kcd)=1}}\frac{1}{g}
	 \sum_{b_d\mid d}\mu(b_d)\sum_{b_g\mid g}\mu(g)\sum_{b_k\mid k}\mu(b_k)\\&\times\sum_{h>0} \sum_{\substack{M, N \\ (M,N)=1 \\ kc^2b_db_gb_kh \mid N\pm M\\ (MN, d) = 1}} \frac{A_F(1, gN)}{\sqrt{MN}}  \mathcal W^\pm\Big(gM, gN; \frac{d}{Qgh}\Big).
\end{align}
We change the order of summation of $b_k$ and $k$ by replacing $b_k$ by $b$ and $k$ by $bk$ to obtain that $\mathcal G(A, C, D)$ is bounded by
\begin{equation}\label{eqn:calGcleCcontribution2}
	 \tilde{\mathcal {G}}(A, C, D) := \frac{Q^{1+\varepsilon}}{A} \sum_{\substack{d\sim D\\\mu^2(d)=1}} \frac{1}{d}\sum_{d_2\mid d}\sum_{\substack{c\sim C}} \sum_{\substack{g}} \frac{1}{g}  \sum_{\substack{b_d \mid d \\ b_g|g}} \sum_{\substack{b \ll Q_2 A\\ (b,d)=1\\\mu^2(b)=1}}S(c^2 b^2 b_d b_g, g, d,k)
\end{equation}
where

\begin{equation}\label{eqn:Sdef}
	S(\ell, b, g, d) := \sum_{\substack{h>0}}  \sum_{\substack{k\ll Q_2A/b}}\Big|\sum_{\substack{M, N \\ (M,N)=1 \\ \ell h k \mid N\pm M\\ (MN, d) = 1}} \frac{A_F(1, gN)}{\sqrt{MN}}  \mathcal W^\pm\Big(gM, gN; \frac{d}{Qgh}\Big)\Big|.
\end{equation}

\subsubsection{\textbf{Estimates for $S(\ell, b, g, d)$}}

Note that $(M, N) = 1$ and $\ell h k| N\pm M$ implies that $(\ell h k, MN) = 1$.  Thus we use $\psi \bmod \ell h k$ to detect the condition $N\equiv \mp M\bmod \ell h k$ and we write
\begin{align}
	S(\ell, b,g, d) \leq \mathcal {M}+ \mathcal {E}
\end{align}
where $\mathcal {M}$ is the contribution from the principal character
\begin{align}\label{Mdef}
\mathcal {M}:=& \sum_{\substack{h>0}} \sum_{k\ll Q_2A/b} \frac{1}{\phi(\ell h k)}\Big| \sum_{\substack{M, N \\ (M,N)=1 \\ (MN, d \ell hk) = 1}} \frac{A_F(1, gN)}{\sqrt{MN}}  \mathcal W^\pm\Big(gM, gN; \frac{d}{Qgh}\Big) \Big|
\end{align} 
and $\mathcal {E}$ is the contribution from non-principal characters
\begin{align}\label{eqn:E}
\mathcal {E}:= \sum_{\substack{h>0}}\sum_{k\ll Q_2A/b} \frac{1}{\phi(\ell h k)} \sum_{\substack{\chi \bmod \ell hk \\ \chi \neq \psi_0}} \Big|\sum_{\substack{M, N \\ (M,N)=1 \\ (MN, d) = 1}} \frac{\chi(M) \overline{\chi(\mp N)} A_F(1, gN)}{\sqrt{MN}}  \mathcal W^\pm\Big(gM, gN; \frac{d}{Qgh}\Big)\Big|.
\end{align}

We record the following technical Lemma about our generating function for the evaluation of the $M,N$-sum.

\begin{lemma}\label{lem:generatingfunction} Let $g,d\in \mathbb Z_{>0}$, $\psi $ be any Dirichlet character and
	let
	$$
	\mathcal L=\mathcal L (g, d,F, \psi, s_1, s_2)= \sum_{\substack{M, N=1\\(MN, d)=1\\(M,N)=1, M\not=N}} \frac{\psi(M)\bar{\psi}(\mp N)A_F(1, gN)}{M^{\frac{1}{2}+s_1}N^{\frac{1}{2}+s_2}}.
	$$
	Then we can write
	\begin{align}
		\mathcal L = 	\bar\psi(\pm 1)\Big(L(\frac{1}{2}+s_1, \psi)L(\frac{1}{2}+s_2, F\times \overline{\psi})\mathcal F(g,d,F, \psi, s_1, s_2)-\lambda_F(g)\Big),
\end{align}
	where 
\begin{align}
	\mathcal F(g,d,F, \psi, s_1, s_2)
	=&\sum_{\substack{(g_2,d)=1\\g_1\mid g_2\mid g}}\frac{\mu(g_1)\mu(g_2)A_F(\frac{g}{g_2}, \frac{g_2}{g_1})\bar{\psi}(g_1g_2)}{(g_1g_2)^{1/2+s_2}} \mathcal H(g_2, d, F, \psi, s_1, s_2).
\end{align}
Moreover, we have $	\mathcal L$ converges absolutely for $\tRe s_1, \tRe s_2 > 1/2$, $\mathcal F$ is analytic when $\tRe(s_1), \tRe(s_2) \ge \varepsilon$ and is bounded there by $g^\vartheta Q^\varepsilon$ for $\vartheta< 5/14$ and $\mathcal H$ is analytic in the region $\tRe s_1, \tRe s_2 \ge \varepsilon$ and uniformly bounded by $(gd)^\varepsilon$ there. 

\end{lemma}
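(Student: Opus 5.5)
We compute the Dirichlet series as an Euler product. First drop the restriction $M\neq N$: since $(M,N)=1$, the only term with $M=N$ is $M=N=1$. That term contributes $\bar\psi(\mp1)A_F(1,g)=\bar\psi(\pm1)\lambda_F(g)$ (here $\lambda_F(g)=A_F(1,g)$). This accounts for the subtracted $\lambda_F(g)$. Pulling out $\bar\psi(\mp 1)=\bar\psi(\pm1)$ leaves the full sum
\[
\mathcal L_0=\sum_{\substack{(MN,d)=1\\(M,N)=1}}\frac{\psi(M)\bar\psi(N)A_F(1,gN)}{M^{1/2+s_1}N^{1/2+s_2}},
\]
which converges absolutely for $\Re s_i>1/2$ by Rankin--Selberg bounds for $A_F$.

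Next we remove the coprimality $(M,N)=1$ by M\"obius, $\sum_{e\mid (M,N)}\mu(e)$. Writing $M=eM'$ and $N=eN'$, the $M'$-sum factors off as $L^{(d)}(1/2+s_1,\psi)$. We are left with
\[
\sum_{(e,d)=1}\frac{\mu(e)|\psi(e)|^2}{e^{1+s_1+s_2}}\sum_{(N',d)=1}\frac{\bar\psi(N')A_F(1,geN')}{N'^{1/2+s_2}}.
\]

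To handle $A_F(1,geN')$ with $g$ fixed, we use the Hecke relations for $\mathrm{SL}_3$. Concretely, we expand via the multiplicativity relation
\[
A_F(1,n)A_F(1,m)=\sum_{g_1\mid g_2\mid (m,n)}\mu(g_1)\mu(g_2)\,A_F\Big(\frac{g_2}{g_1},\frac{mn}{g_2^2}\Big)\quad(\text{or its inverse form}).
\]
More efficiently, we work prime by prime. For $p\nmid gd$ the local factor is that of $L(1/2+s_2,F\times\bar\psi)$ times an absolutely convergent correction in $p^{-1-s_1-s_2}$ (from the $e$-sum and from $A_F(1,p^k)$ generating functions). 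For $p\mid g$ the local factor is a finite rational expression. The inverse Hecke relation
\[
A_F(1,gN)=\sum_{g_1\mid g_2\mid (g,N)}\mu(g_1)\mu(g_2)A_F\big(\tfrac{g}{g_2},\tfrac{g_2}{g_1}\big)A_F\big(1,\tfrac{N}{g_2}\big)
\]
(as in Goldfeld's book) produces exactly the displayed $g_1,g_2$ structure. Here $\bar\psi(g_1g_2)(g_1g_2)^{-1/2-s_2}$ arises from substituting $N=g_1g_2N''$, and the condition $(g_2,d)=1$ is inherited from $(N,d)=1$. Whatever local data remains, namely the Euler factors at $p\mid d$ that must be removed from the completed $L$-functions, the factor $\prod_p(1-\ldots p^{-1-s_1-s_2})$ from $\mu(e)$, and the coprimality of $N''$ with the $g_2$-part, is collected into $\mathcal H(g_2,d,F,\psi,s_1,s_2)$.

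For the analytic properties, $\mathcal H$ is a product over $p\mid g_2d$ of finite local polynomials in $p^{-1/2-s_i}$ with coefficients $A_F(p^a,p^b)$ of bounded index. These are bounded by $\prod_{p\mid gd}(1+O(p^{-\varepsilon}))\ll (gd)^\varepsilon$ when $\Re s_i\ge\varepsilon$, after using the Kim--Sarnak bound $|\alpha_{i}(p)|\le p^{5/14}$ for the local parameters. The product over $p\nmid gd$ of correction factors converges absolutely there, since each is $1+O(p^{-1-2\varepsilon+2\cdot 5/14\cdot 0})$; more precisely the cross terms have size $p^{-1-\Re(s_1+s_2)}$. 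For $\mathcal F$ we bound $A_F(g/g_2,g_2/g_1)\ll g^{\vartheta+\varepsilon}$ using the Kim--Sarnak bound toward Ramanujan, valid for any $\vartheta>5/14$ as stated ($\vartheta<5/14$ in the statement is presumably meant as that exponent). The number of divisor pairs is $\ll g^\varepsilon$.

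The main obstacle is bookkeeping at primes dividing $g$. There the $e$-sum and the Hecke expansion interact, and we must check that no factor of the shape $p^{k(1/2-\Re s_2)}$ arises that is unbounded for small $\Re s_2$. We would handle this by writing the local factor at $p\mid g$ explicitly as a finite sum and verifying that every term carries nonpositive powers of $p^{1/2}$.
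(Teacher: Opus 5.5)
Your algebraic reduction works and differs somewhat from the paper's route. You remove $(M,N)=1$ by M\"obius and then compute prime by prime. The paper instead first expands $A_F(1,gN)$ by the Hecke relation, then sums over $M$ against the Euler factors of $L(\tfrac12+s_2,F\times\bar\psi)$ at $p\mid M$. Both give the same local factors:
\begin{itemize}
\item for $p\nmid gd$, the factor $L_p(\tfrac12+s_1,\psi)L_p(\tfrac12+s_2,F\times\bar\psi)\bigl(1-\psi(p)p^{-1/2-s_1}G(p,s_2)\bigr)$;
\item at $p\mid g$, a finite polynomial matching the $g_1\mid g_2\mid g$ expansion.
\end{itemize}
After your M\"obius step, $A_F(1,geN')$ involves divisors of $ge$, so that $g_1\mid g_2\mid g$ structure only reappears after regrouping prime by prime.

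The genuine gap is the analytic claim that $\prod_{p\nmid gd}\bigl(1-\psi(p)p^{-1/2-s_1}G(p,s_2)\bigr)$ converges absolutely, uniformly on $\tRe s_1,\tRe s_2\ge\varepsilon$. You assert each factor is $1+O(p^{-1-2\varepsilon})$. But the leading term is $|\psi(p)|^2A_F(1,p)p^{-1-s_1-s_2}$, and its coefficient $\sum_j\alpha_j(p)$ is only $O(1)$ under Ramanujan. The pointwise Kim--Sarnak bound gives just $O(p^{-9/14-2\varepsilon})$, and $\sum_p p^{-9/14}$ diverges. So neither the analyticity of $\mathcal F$ nor the uniform bounds for $\mathcal H$ and $\mathcal F$ follow from what you wrote.

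The missing input is an averaged bound. Rankin--Selberg gives $\sum_p|A_F(1,p)|^2p^{-1-\varepsilon}<\infty$, hence $\sum_p|A_F(1,p)|p^{-1-\varepsilon}<\infty$ by Cauchy--Schwarz. The terms $p^{-3/2-s_1-2s_2}\sum_{i<j}\alpha_i(p)\alpha_j(p)$ and $p^{-2-s_1-3s_2}\prod_j\alpha_j(p)$ must be controlled similarly. The paper does this via Rankin--Selberg for the relevant symmetric polynomials; for $\mathrm{SL}_3(\mathbb Z)$ one can also use $\sum_{i<j}\alpha_i\alpha_j=A_F(p,1)=\overline{A_F(1,p)}$ and $\prod_j\alpha_j=1$. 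This convergence argument is essentially the whole content of the paper's proof.

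The step you flag as the main obstacle is harmless. Only the finitely many primes $p\mid g$ contribute finite local polynomials, and each term of $\mathcal F$ carries $(g_1g_2)^{-1/2-\tRe s_2}\le 1$. Also $|A_F(g_2/g_1,g/g_2)|\ll g^{\vartheta+\varepsilon}$, with at most $\tau(g)^2$ terms. So no factor $p^{k(1/2-\tRe s_2)}$ can arise.

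Some smaller slips:
\begin{itemize}
\item The correct inversion is $A_F(1,gN)=\sum_{g_1\mid g_2\mid g,\ g_1g_2\mid N}\mu(g_1)\mu(g_2)A_F(g_2/g_1,g/g_2)A_F(1,N/(g_1g_2))$; test $g=N=p$ to get $A_F(1,p^2)=A_F(1,p)^2-A_F(p,1)$. Your version swaps the arguments of the first coefficient (as the displayed statement does, harmlessly for bounds) and has $N/g_2$ instead of $N/(g_1g_2)$.
\item The product formula is $A_F(1,m)A_F(1,n)=\sum_{e\mid(m,n)}A_F(e,mn/e^2)$, with no M\"obius factors.
\item $\bar\psi(\mp1)\neq\bar\psi(\pm1)$ when $\psi$ is odd. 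This is only a unimodular constant, and the paper has the same inconsistency.
\item The local factors at $p\mid d$ are bounded by an absolute constant rather than $1+O(p^{-\varepsilon})$. This still gives $\ll(gd)^\varepsilon$.
\end{itemize}
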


\begin{proof}
	Using the Hecke relation that 
	\begin{align}\label{heckerelationgl3}
		A_F(mn,1)=\sum_{\substack{abc=n\\ b\mid c\mid m}}\mu(b)\mu(c)A_F( \frac{c}{b},\frac{m}{c},)A_F(1,a)
	\end{align}
	we write
	\begin{align}
		\sum_{\substack{N\\ (N, dM)=1}}\frac{\bar{\psi}(\mp N)A_F(1, gN)}{N^{1/2+s_2}}=\sum_{\substack{(g_2,dM)=1\\g_1\mid g_2\mid g}}\mu(g_1)\mu(g_2) A_F(\frac{g_2}{g_1},\frac{g}{g_2}) \sum_{\substack{N\\ (N, dM)=1}} \frac{\bar{\psi}(\mp g_1g_2 N)A_F(1,N)}{(g_1g_2N)^{1/2+s_2}}
	\end{align}
	so that 
	\begin{equation}
		\begin{split}
			\mathcal L &=\bar{\psi}(\mp 1)\Big(-A_F(1,g) +\sum_{\substack{(g_2,d)=1\\g_1\mid g_2\mid g}}\frac{\mu(g_1)\mu(g_2)A_F( \frac{g_2}{g_1},\frac{g}{g_2})\bar{\psi}(g_1g_2)}{(g_1g_2)^{1/2+s_2}}  \mathcal H(g_2, d, F, \psi, s_1, s_2)\Big)
		\end{split}
	\end{equation}
	where 
	\begin{align}
		\mathcal H(g_2, d, F, \psi, s_1, s_2)=&L(1/2+s_2, F\times \bar{\psi})\prod_{p\mid d}\prod_{j=1}^3(1-\frac{\alpha_j(p)\bar{\psi}(p)}{p^{1/2+s_2}})  
	\\ & \times\sum_{\substack{M\\ (M, dg_2)=1}}\frac{\psi(M)}{M^{1/2+s_1}}\prod_{p\mid M}\prod_{j=1}^3(1-\frac{\alpha_j(p)\bar{\psi}(p)}{p^{1/2+s_2}}).
	\end{align}
	The sum over $M$ is
\begin{align}
		&\prod_{p\nmid dg_2} \Big(1 + \prod_{j=1}^3 \left( 1-\frac{\alpha_j(p)\bar{\psi}(p)}{p^{1/2+s_2}}\right) \sum_{k\ge 1} \frac{\psi(p^k)}{p^{k(1/2+s_1)} }  \Big)\\
		=&L(1/2+s_1, \psi)\prod_{p\mid dg_2} \Big(1-\frac{\psi(p)}{p^{1/2+s_1}}\Big)\prod_{p\nmid dg_2}\Big(1 - \frac{\psi(p)}{p^{1/2+s_1}} G(p, s_2)  \Big),
\end{align}
where 
\begin{equation}
	\begin{split}
	G(p, s_2) 
	&:= 1 - \prod_{j=1}^3(1-\frac{\alpha_j(p)\bar{\psi}(p)}{p^{1/2+s_2}})\\
	 &= \frac{\overline{\psi(p)}}{p^{1/2+s_2}} \sum_{i=1}^3 \alpha_i(p) - \frac{\overline{\psi(p)^2}}{p^{1+2s_2}} \sum_{i<j} \alpha_i(p)\alpha_j(p) + \frac{\overline{\psi(p)^3}}{p^{3/2+3s_2}} \prod_{i=1}^3\alpha_i(p)
	 \end{split}
\end{equation}
It now suffices to show that
\begin{equation}\label{eqn:absconverge}
	\sum_p \frac{1}{p^{1+\varepsilon}} \Big|\sum_{i=1}^3 \alpha_i(p)\Big| + \frac{1}{p^{3/2+\varepsilon}}\Big| \sum_{i<j} \alpha_i(p)\alpha_j(p)\Big| +  \frac{1}{p^{2+\varepsilon}} \Big|\prod_{i=1}^3\alpha_i(p)\Big|
\end{equation}
converges for any $\varepsilon > 0$.  Each one of the terms above involve homogeneous symmetric polynomials in $\alpha_1(p), \alpha_2(p)$ and $\alpha_3(p)$.  The proof below essentially relies on expressing these polynomials in terms of Schur polynomials.

By Rankin-Selberg theory (see Proposition 6 of \cite{Molteni}), we see that
\begin{equation}\label{eqn:RSbdd}
\sum_p \frac{1}{p^{1+\varepsilon}} \Big| \sum_{i} \alpha_i(p) \Big|^2 \textup{,} \; \; \sum_p \frac{1}{p^{2+\varepsilon}} \Big| \sum_{i\le j} \alpha_i(p) \alpha_j(p) \Big|^2, \; \sum_p \frac{1}{p^{3+\varepsilon}} \Big| \sum_{\substack{0\le i, j, k \le 3\\ i+j+k = 3}} \alpha_1(p)^i \alpha_2(p)^j \alpha_3(p)^k \Big|^2	
\end{equation}
converge for any $\varepsilon > 0$, where the indices over $i, j$ are summed over the range $1\le i, j \le 3$.
By Cauchy-Schwarz, we have 
\begin{equation}\label{eqn:1absconverge}
\sum_p \frac{1}{p^{1+\varepsilon}} \Big|\sum_{i} \alpha_i(p) \Big| < \infty
\end{equation}
 \begin{equation}\label{eqn:2absconverge}
\sum_p \frac{1}{p^{3/2+\varepsilon}} \Big|  \sum_{i\le j} \alpha_i(p) \alpha_j(p)\Big| < \infty,
 \end{equation}
  \begin{equation}\label{eqn:3absconverge}
 	\sum_p \frac{1}{p^{2+\varepsilon}} \Big| \sum_{\substack{0\le i, j, k \le 3\\ i+j+k = 3}} \alpha_1(p)^i \alpha_2(p)^j \alpha_3(p)^k  \Big| < \infty,
 \end{equation}
for all $\varepsilon > 0$.  Note that \eqref{eqn:1absconverge} shows that the first term in \eqref{eqn:absconverge} converges for any $\varepsilon>0$.
For the second term in \eqref{eqn:absconverge}, we note that $\sum_{i<j} \alpha_i(p)\alpha_j(p) = \left(\sum_{i} \alpha_i(p)\right)^2 - \sum_{i\le j} \alpha_i(p) \alpha_j(p)$ and so it follows by the triangle inequality that
\begin{equation}\label{eqn:4absconverge}
	\sum_p \frac{1}{p^{3/2+\varepsilon}}  |\sum_{i<j} \alpha_i(p)\alpha_j(p)| \le \sum_p \frac{1}{p^{3/2+\varepsilon}}  \Big|\sum_{i\le j} \alpha_i(p) \alpha_j(p)\Big| + \sum_p \frac{1}{p^{3/2+\varepsilon}}  \Big|\sum_{i} \alpha_i(p)\Big|^2<\infty
\end{equation}
using \eqref{eqn:RSbdd} and \eqref{eqn:2absconverge}.
Finally for the third term in \eqref{eqn:absconverge}, we can write
$$-\alpha_1(p)\alpha_2(p)\alpha_3(p) = \big(\sum_i \alpha_i \big)^3 - 2\big(\sum_i \alpha_i \big) \big(\sum_{i<j} \alpha_i(p)\alpha_j(p)\big) - \sum_{\substack{0\le i, j, k\le 3\\ i+j+k = 3}} \alpha_1(p)^i \alpha_2(p)^j \alpha_3(p)^k.
$$
Using that $|\sum_i \alpha_i(p)|\ll p^{1/2}$, \eqref{eqn:RSbdd}, \eqref{eqn:4absconverge}, and \eqref{eqn:3absconverge} and the triangle inequality we see that the third term in \eqref{eqn:absconverge} also converges. 
\end{proof}

\subsubsection{\textbf{Contributions from $\mathcal E$}}
We first prove the bound for $\mathcal E$ below.

\begin{prop}\label{prop:E}
	We have \begin{align}
	\mathcal {E}\ll  Q^\varepsilon A g^\vartheta \left(Q_2 \frac{Q^{1-\delta} D}{g b}  + \left(Q_2 \frac{Q^{1-\delta} D}{gb}\right)^{3/4} \ell^{1/4}\right).
	\end{align}
	
\end{prop}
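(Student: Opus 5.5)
We bound $\mathcal E$ by the same route as the treatment of $\mathcal T(\chi\neq\chi_0)$ in Lemma \ref{lem:truncationcalc}: Mellin inversion, then the generating function of Lemma \ref{lem:generatingfunction}, then the large sieve with divisibility.

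\textbf{Step 1: separate variables.} Write $\mathcal W^\pm(gM,gN;d/(Qgh))$ by Mellin inversion (Lemma \ref{lem:MellinXY}) on the lines $\tRe s_1=\tRe s_2=\varepsilon$. For each non-principal $\chi \bmod \ell hk$ the $M,N$-sum becomes
\[
\frac{1}{(2\pi i)^2}\int\int \Wt^\pm_2\Big(s_1,s_2;\frac{d}{Qgh}\Big)g^{-s_1-s_2}\,\mathcal L(g,d,F,\chi,s_1,s_2)\,ds_2\,ds_1 .
\]
Lemma \ref{lem:generatingfunction} then gives $\mathcal L$ as
\[
L(\tfrac12+s_1,\chi)\,L(\tfrac12+s_2,F\times\bar\chi)\,\mathcal F \;-\; \lambda_F(g).
\]
The contour shift to $\tRe s_i=\varepsilon$ passes no poles, since $\chi$ is non-principal (after reducing to the primitive inducing character $\psi \bmod c$, $c\mid \ell hk$, at the cost of $(\ell hkd)^\varepsilon$ Euler factors). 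We use $|\mathcal F|\ll g^\vartheta Q^\varepsilon$. The isolated term $-\lambda_F(g)$ integrates to $\mathcal W^\pm$ at a single point. Summed over $\chi$ with weight $1/\phi$, it gives at most the number of $(h,k)$ pairs times $O(1)$, which is absorbed into the first term of the bound.

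\textbf{Step 2: truncate $h$.} With $u=d/(Qgh)$, the factor $\exp(-\kappa(uQ^{2-\delta})^{-1/3})$ in Lemma \ref{lem:MellinXY} makes everything negligible unless $h\ll Q^{1-\delta+\varepsilon}D/g$. We keep $u^{-\tRe s_2}=u^{-\varepsilon}$. The decay in $|s_1|,|s_2|$ and the factor $(1+uQ^{1+\eta})^{k-1}$ are handled by choosing $k$ suitably, using that $uQ^{1+\eta}\ll Q^\varepsilon$ in the relevant regime; otherwise we trade against $|s_1+s_2|^{-l}$. The result is
\[
\mathcal E\ll Q^\varepsilon g^\vartheta \int\int e^{-|s_1|-|s_2|}\sum_{h\ll H}\sum_{k\ll Q_2A/b}\frac{1}{\phi(\ell hk)}\sum_{\chi\neq\chi_0}\big|L(\tfrac12+s_1,\chi)L(\tfrac12+s_2,F\times\bar\chi)\big|,
\qquad H=\frac{Q^{1-\delta}D}{g}.
\]

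\textbf{Step 3: apply the large sieve.} Group by the modulus $m=hk\ell$, which lies in a range of size $\ll HQ_2A\ell/b$ and is divisible by $\ell$. Each $m$ occurs with multiplicity $\tau(m)\ll Q^\varepsilon$. Pass to primitive characters $\psi \bmod c$ with $c\mid m$, split $c\sim S$ dyadically, and apply Cauchy--Schwarz exactly as in Lemma \ref{lem:truncationcalc}, with the divisibility parameter $\delta'\mid \ell$ from $(c,\ell)$. The approximate functional equation together with Lemma \ref{LargeSieve} (or Lemma \ref{HybridLargeSieve} if we keep the $t$-aspect) gives
\[
\sum_{\substack{c\sim S\\ \delta'\mid c}}\frac1c\sumstar_{\psi \bmod c}|L(\tfrac12+s,\psi)|^2\ll \frac{S^{1+\varepsilon}}{\delta'},
\qquad
\sum_{\substack{c\sim S\\ \delta'\mid c}}\frac1c\sumstar_{\psi \bmod c}|L(\tfrac12+s,F\times\psi)|^2\ll S^\varepsilon\Big(\frac{S}{\delta'}+S^{1/2}\Big).
\]
The weights $1/\phi(\ell hk)$ over the multiples of $c$ contribute a harmonic factor $\ll Q^\varepsilon/[\ell,c]$. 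Repeating the computation of Lemma \ref{lem:truncationcalc} with $K=HQ_2A\ell/b$ in place of $K$ and $\ell$ as the divisor then yields
\[
\frac{K}{\ell}+\frac{K^{3/4}}{\ell^{1/2}}\cdot(\text{normalization}) \;\asymp\; A\,\frac{Q_2Q^{1-\delta}D}{gb}+A\Big(\frac{Q_2Q^{1-\delta}D}{gb}\Big)^{3/4}\ell^{1/4},
\]
up to $Q^\varepsilon$. This is the claimed bound; the factor $A$ comes from the $k$-range $Q_2A/b$, estimated crudely.

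\textbf{Main obstacle.} The delicate step is Step 2: controlling $\Wt^\pm_2$ uniformly when $u$ is small ($h$ large), so that the $h$-sum truncates at $Q^{1-\delta}D/g$ without losing powers. We also need the bookkeeping of the $1/\phi(\ell hk)$ weights against primitive moduli $c\mid \ell hk$ to recover exactly the $\ell^{1/4}$ dependence. For the latter I would first try to mimic the $\sum_{\delta\mid\ell}\delta\,\mathcal T_1^{1/2}\mathcal T_2^{1/2}$ manipulation verbatim.
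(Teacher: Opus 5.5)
Your Steps 1 and 3 follow the paper: Mellin inversion, Lemma \ref{lem:generatingfunction}, the contour shift to $\tRe s_i=\varepsilon$, passage to primitive $\psi \bmod e$ with $E\ll HQ_2A\ell/b$, the harmonic bound $\sum_{h,k:\,e\mid \ell hk}\phi(\ell hk)^{-1}\ll Q^\varepsilon (e,\ell)/(\ell e)$, then Cauchy--Schwarz and the large sieve with divisibility. Evaluating the isolated term directly as $\lambda_F(g)\mathcal W^\pm(g,g;u)$ is a valid alternative to the paper's treatment.

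The gap is in Step 2. Your claim that $uQ^{1+\eta}\ll Q^\varepsilon$ ``in the relevant regime'' is false. Indeed $uQ^{1+\eta}=dQ^{\eta}/(gh)$, which is a positive power of $Q$ whenever $h$ is much smaller than $D/g$ (e.g.\ $g=h=1$ with $D$ a power of $Q$). In that range Lemma \ref{lem:MellinXY} only gives $|\Wt^\pm_2|\ll Q^\varepsilon|s_1|^{-1}|s_1+s_2|^{-l}$ for $|s_1|$ up to about $(D/(gh))Q^{\varepsilon}$. Trading against $|s_1+s_2|^{-l}$ only forces $s_2\approx -s_1$; it gives no decay in $s_1$ itself. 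This is not a defect of the lemma. For the sign with $|M-N|\asymp Qh/d$, while $x=gM\approx y=gN$ can be as large as $Q^{1+\eta}$, the weight in $x$ is a bump of relative width $\asymp 1/(uy)$ next to the diagonal. Its Mellin transform therefore genuinely lives on $|\Im s_1|$ up to about $uy$. So your display with $e^{-|s_1|-|s_2|}$ is unjustified. The regime you call delicate is also the wrong one: large $h$ is disposed of by the factor $\exp(-\kappa(uQ^{2-\delta})^{-1/3})$, and small $h$ is where the work lies.

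What is missing is the $t$-aspect and its coupling to $h$. Write $s_1=s$ and $s_2=z-s$ with $|z|\ll Q^\varepsilon$. Keep $|s|\ll \max(1,D/(gh))Q^\varepsilon$ with weight $1/(1+|s|)$ (take $k=1$). Split $h\sim H$ and $|s|\sim S$ dyadically, and apply the hybrid large sieve (Lemma \ref{HybridLargeSieve}) with $T\asymp S$ to the second moments of $L(\tfrac12+s,\psi)$ and $L(\tfrac12+z-s,F\times\bar\psi)$ over $e\sim E$, $\lambda\mid e$. These moments are $\ll Q^\varepsilon E^2S/\lambda$ and $\ll Q^\varepsilon(E^2S/\lambda+(ES)^{3/2})$ respectively. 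After dividing by $\ell ES$ and summing over $\lambda\mid \ell$, the contribution is $\ll Q^\varepsilon g^\vartheta\big(E/\ell+E^{3/4}S^{1/4}\ell^{-1/2}\big)$. This carries an extra factor $S^{1/4}$ compared with what your argument produces.

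The stated bound survives only because $S$ and $H$ are inversely related: $S\ll D/(gH)$, so $H^{3/4}S^{1/4}\ll H^{1/2}(D/g)^{1/4}\le (Q^{1-\delta}D/g)^{3/4}$. Bounding $S\le D/g$ and $H\le Q^{1-\delta}D/g$ independently would lose a factor $(D/g)^{1/4}$ in the second term. For the same reason you cannot group all $(h,k)$ by the modulus $\ell hk$ as in your Step 3, since the admissible $t$-range depends on $h$; the dyadic parameter $H$ must be kept. Mimicking Lemma \ref{lem:truncationcalc} verbatim, which has no hybrid aspect, will not capture this.
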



\begin{proof}
By the decay of $\mathcal W$, we may and will assume that all parameters are bounded by a power of $Q$.
From \eqref{eqn:E}, we write the sum over $\chi \bmod \ell hk$ as a sum over primitive characters $\psi \bmod e$ with $e \mid \ell hk$, apply a dyadic subdivision $h\sim H, e \sim E$ with $E$ a power of $2$ satisfying 
\begin{equation}\label{eqn:Ebdd}
	E \ll HQ_2A \ell/b,
\end{equation} and let $\mathcal E(E,H)$ denote the contribution of the dyadic interval. We have
	
\begin{align}\label{eqn:EE}
	\mathcal E(E, H) 
	&= \sum_{\substack{e\sim E}} \sum_{\substack{h\sim H\\ k\ll Q_2 A/b \\ e|\ell hk}} \frac{1}{\phi(\ell  hk)} \sumstar_{\substack{\psi \bmod e}}  |S(\psi) |
\end{align}
where
\begin{align}
S(\psi) 
&= \sum_{\substack{M, N \\ (M,N)=1 \\ (MN, d) = 1}} \frac{\psi(M) \overline{\psi(\mp N)} A_F(1, gN)}{\sqrt{MN}}  \mathcal W^\pm\Big(gM, gN; \frac{d}{Qgh}\Big) \\
&=  \int_{(1/2+\varepsilon)}\int_{(1/2+\varepsilon)} \widetilde{\mathcal W}_2\Big(s_1,s_2; \frac{d}{Qgh}\Big)
\\&\quad \times g^{-s_1-s_2}
\sum_{\substack{M, N=1\\(MN, d)=1\\(M,N)=1, M\not=N}} \frac{\psi(M)\bar{\psi}(\mp N)A_F(1, gN)}{M^{\frac{1}{2}+s_1}N^{\frac{1}{2}+s_2}} ds_1ds_2.
\end{align}
by Mellin inversion in as in Lemma \ref{lem:MellinXY}. By Lemma \ref{lem:generatingfunction}, we see that 

\begin{align}
	S(\psi) 
	&=  \int_{(\varepsilon)}\int_{(\varepsilon)} \widetilde{\mathcal W}_2\Big(s_1,s_2; \frac{d}{Qgh}\Big) g^{-s_1-s_2} \bar\psi(\pm 1)\\
	&\times \Big(L(\frac{1}{2}+s_1, \psi)L(\frac{1}{2}+s_2,F\times \overline{\psi})\mathcal F(g,d,F, \psi, s_1, s_2)-A_F(1,g)\Big)
 ds_1ds_2,
\end{align}
where $\mathcal F$ is as in Lemma \ref{lem:generatingfunction}, and we have moved the contours of integration to $\tRe s_1, \tRe s_2 = \varepsilon$ without passing any poles since $\psi$ is non-principle.

For $\tRe s_1 = \tRe s_2 = \varepsilon$, Lemma \ref{lem:MellinXY} implies that
\begin{equation}\label{eqn:mellinbdd}
	|\Wt^\pm_2(s_1, s_2; u)| \ll Q^{\varepsilon}\exp(-\kappa(\frac{1}{uQ^{2-\delta}})^{1/3})
	\Big(\frac{(1+uQ^{1+\eta})^{k-1}}{|s_1|^{k}}\Big) |s_1+s_2|^{-l},
\end{equation}for any $k, l \ge 0$ by choosing $\eta$ sufficiently small (in terms of $\varepsilon$).
Putting $z = s_1+s_2$, we see that the contribution of $z \gg Q^\varepsilon$ is bounded by $Q^{-A}$ for any $A$ by choosing $\eta$ sufficiently small, and so is negligible.  For $u = d/Qgh$, we see that the contribution of $s_1 \gg \max(1, \frac{d}{gh}) Q^\varepsilon$ is also negligible.  For ease of notation, write $S_0 = \max(1, \frac{D}{gh}) Q^\varepsilon$ and the contribution from $uQ^{2-\delta}<Q^{-\varepsilon}$ is also negligible. Thus we can assume
\begin{equation}\label{eqn:Hbdd}
	h \sim H \ll \frac{Q^{1-\delta +\varepsilon} D}{g}.
\end{equation}
Using the bound \eqref{eqn:mellinbdd} with $k=1$, we see that

\begin{align}
	S(\psi)  
	&\ll Q^\varepsilon \max_{\substack{z = 2\varepsilon + it\\ t\ll Q^\varepsilon}}\int_{\substack{(\varepsilon)\\ s \ll S_0}} \frac{1}{1+|s|}
	\big|L(\tfrac{1}{2}+s, \psi)L(\tfrac{1}{2}+z - s, F\times \overline{\psi})\mathcal F(g,d, F, \psi, s, z-s)\big|
	ds \\
	&\quad+ Q^\varepsilon |\lambda_F(g)| + Q^{-A} \\
	&\ll Q^\varepsilon g^\vartheta \max_{\substack{z = 2\varepsilon + it\\ t\ll Q^\varepsilon}}\int_{\substack{(\varepsilon)\\ s \ll S_0}} \frac{1}{1+|s|}
	\left(\Big|L(\tfrac{1}{2}+s, \psi)L(\tfrac{1}{2}+z - s, F\times \overline{\psi})\Big| + 1\right) ds\label{eqn:Spsibdd}
\end{align}
using the bound for $\mathcal F$ in Lemma \ref{lem:generatingfunction}.


Further, applying a subdivision to the integral over $s$, we have either $|s| \ll Q^\varepsilon $ or $s \sim S$ where $S$ runs through powers of $2$ with $S \ll \frac{Q^\varepsilon D}{gH}$ so that we may also assume $D \gg gH$.  
Finally, note that
$$\sum_{\substack{h\sim H \\ e|\ell h}} \sum_{k\ll Q_2A/b}\frac{1}{\phi(\ell hk)} \ll Q^\varepsilon  \frac{1}{\ell e} \sum_{\lambda|\ell} \lambda \delta(\lambda|e).
$$
Putting this into \eqref{eqn:EE} gives that the contributions from $s\sim S$ to $\mathcal E(E, H)$, denoted by $\mathcal E(E,H,S)$, can be bounded by 
\begin{align}
	\mathcal E (E, H, S) 
	&\ll \frac{  Q^\varepsilon g^\vartheta}{\ell ES} \sum_{\lambda|\ell}\lambda  \sum_{\substack{e\sim E\\ \lambda|e}}  \sumstar_{\substack{\psi \bmod e}}  \max_{\substack{z = 2\varepsilon + it\\ t\ll Q^\varepsilon}}\int_{\substack{(\varepsilon)\\ s \ll S}} \Big(\Big|L(\frac{1}{2}+s, \psi)L(\frac{1}{2}+z - s, F\times \overline{\psi})\Big| + 1\Big) ds \\
	&\ll \frac{  Q^\varepsilon g^\vartheta}{\ell ES} \sum_{\lambda|\ell}\lambda \Big((\mathcal S_1 \mathcal S_2)^{1/2} + \frac{E^2 S}{\lambda}\Big)
\end{align}
upon applying Cauchy-Schwarz, where

\begin{align}
	\mathcal S_1 
	&= \sum_{\substack{e\sim E\\ \lambda|e}}  \sumstar_{\substack{\psi \bmod e}} \int_{\substack{(\varepsilon)\\ s \ll S}} \Big|L(\frac{1}{2}+s, \psi)\Big|^2 ds \ll Q^\varepsilon \left(\frac{E^2 S}{\lambda} + ES\right),
\end{align}
by the approximate functional equation and the hybrid large sieve bound from Lemma \ref{HybridLargeSieve}, and similarly

\begin{align}
	\mathcal S_2 
	&= \sum_{\substack{e\sim E\\ \lambda|e}}  \sumstar_{\substack{\psi \bmod e}} \int_{\substack{(\varepsilon)\\ s \ll S}} \Big|L(\frac{1}{2}+z - s, F\times \overline{\psi})\Big|^2 ds \ll Q^\varepsilon \left(\frac{E^2 S}{\lambda} + (ES)^{3/2}\right).
\end{align}
Since $\lambda \ll E$ we have $ES \ll E^2 S/\lambda$.  It follows that

\begin{align}
	\mathcal E (E, H, S) 
	&\ll \frac{  Q^\varepsilon g^\vartheta}{\ell } \sum_{\lambda|\ell}\lambda \left(\frac{E}{\lambda} + \frac{E^{3/4} S^{1/4}}{\lambda^{1/2}}\right) \ll Q^\varepsilon g^\vartheta \left(\frac{E}{\ell} + \frac{E^{3/4} S^{1/4}}{\ell^{1/2}}\right).
\end{align}
We now sum $E \ll HQ_2 A \ell/b$ from \eqref{eqn:Ebdd} dyadically to get
\begin{align}
		\mathcal E (H,S):=\sumd_E \mathcal E (E; H, S) 
	&\ll Q^\varepsilon g^\vartheta \left(\frac{HQ_2 A}{b} +\Big(\frac{HQ_2 A}{b} \Big)^{3/4} \ell^{1/4} S^{1/4}\right).
\end{align}
Further summing dyadically $S \ll \max \big(1, \frac{D}{gH}\big)Q^\varepsilon$ and $H \ll \frac{Q^{1-\delta+\varepsilon} D}{g}$, we have

\begin{align}
	\mathcal E \ll  Q^\varepsilon g^\vartheta \Big(Q_2 A \frac{Q^{1-\delta} D}{g b}  + \Big(Q_2 A \frac{Q^{1-\delta} D}{gb}\Big)^{3/4} \ell^{1/4}\Big),
\end{align}
as desired.
\end{proof}

Putting the estimate from Proposition \ref{prop:E} into \eqref{eqn:calGcleCcontribution2}, we have the contribution from non-principal characters can be bounded by
\begin{align}\label{eqn:calGcleCcontribution3}
		 \tilde{\mathcal G}(A, C, D;\mathcal E) \ll &Q^{1+\varepsilon}\sum_{\substack{d\sim D}} \frac{1}{d} \sum_{\substack{c\sim C}} \sum_{\substack{g}} \frac{1}{g^{1+3/4 - \vartheta}}  \sum_{\substack{b_d \mid d \\ b_g|g}} \sum_{b \ll Q_2 A}\\
		& \times  \left(Q_2 \frac{Q^{1-\delta} D}{g b}  + \big(Q_2 \frac{Q^{1-\delta} D}{g b}\big)^{3/4} \big(\frac{b_db_g b^2 c^2 }{A}\big)^{1/4}\right)\\
		&\ll Q^{1+\varepsilon}\sum_{\substack{d\sim D}} \frac{1}{d} \sum_{\substack{c\sim C}} \sum_{\substack{g}} \frac{1}{g^{1+3/4 - \vartheta}}\\ & \times \left(Q_2 \frac{Q^{1-\delta} D}{g }  + \big(Q_2 \frac{Q^{1-\delta} D}{g }\big)^{3/4} \big(Dg c^2 \big)^{1/4}Q_2^{3/4}A^{1/2}\right)
\end{align}
using $b_d\ll D$ and $b_g\leq g$. 
Note that the sum over $g$ still converges since $\vartheta < 1/2$.  Summing over $d\sim D$, we obtain

\begin{align}\label{eqn:calGcleCcontribution4}
\tilde{	\mathcal G}(A, C, D;\mathcal E) 
	&\ll Q^{1+\varepsilon} \sum_{\substack{c\sim C}} \left(Q_2 Q^{1-\delta} D  + \left(Q_2 Q^{1-\delta} D\right)^{3/4} (D c^2)^{1/4} Q_2^{3/4} A^{1/2} \right) \\
	&\ll Q^\varepsilon\Big(Q_2 Q^{2-\delta} D C  +  Q^{\frac{7}{4}-\frac{3}{4} \delta}  Q_2^{3/2}DC^{3/2} A^{1/2}\Big).
\end{align}
Dyadically summing $A^{1/2} D C^{3/2}$ over $ACD \ll R$ gives the bound $ R^{3/2}$ appearing in Proposition \ref{prop:G}.
\subsubsection{\textbf{Contributions from $\mathcal M$}}
Now we estimate the contributions of the principal characters.
\begin{prop}\label{lem:M}
	With $\mathcal M$ as in \eqref{Mdef}, we have 
	\begin{align}
		\mathcal{M} \ll \frac{Q^{1/2+\varepsilon} \mathcal J(g)} {\ell},
	\end{align}
	where
	\begin{align}\label{eqn:calJgdef}
		\mathcal J(g)=\sum_{\substack{(g_2,d)=1\\g_1\mid g_2\mid g}}\frac{|A_F( \frac{g_2}{g_1},\frac{g}{g_2})|}{(g_1g_2)^{1/2}}.
	\end{align}
\end{prop}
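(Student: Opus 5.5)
We need to bound $\mathcal M$, the contribution of the principal character. The plan is to evaluate the inner $M,N$-sum by Mellin inversion and the principal-character case of Lemma \ref{lem:generatingfunction}, shift contours to extract a small main term, and then sum over $h$ and $k$ trivially.

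\textbf{Mellin expansion.} For fixed $h,k$, apply Lemma \ref{lem:MellinXY} to write the inner sum as
$$\frac{1}{(2\pi i)^2}\int_{(c_1)}\int_{(c_2)}\Wt_2^\pm\Big(s_1,s_2;\frac{d}{Qgh}\Big)g^{-s_1-s_2}\,\mathcal L(g,d\ell hk,F,\psi_0,s_1,s_2)\,ds_2\,ds_1 ,$$
where $\psi_0$ is the principal character mod $\ell hk$. We absorb the coprimality $(MN,\ell hk)=1$ into the condition $(MN,d\ell hk)=1$ of Lemma \ref{lem:generatingfunction}, at a cost of $(d\ell hk)^{\varepsilon}$ in the bounds for $\mathcal H$.

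That lemma gives
$$\mathcal L=\zeta^{(d\ell hk)}(\tfrac12+s_1)\,L^{(d\ell hk)}(\tfrac12+s_2,F)\,\mathcal F-\lambda_F(g).$$
Since $F$ is a cusp form, $L(\tfrac12+s_2,F)$ is entire, so the only pole is from $\zeta$ at $s_1=1/2$.

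\textbf{Contour shifts and the residue.} Move $s_2$ to $\tRe s_2=\varepsilon$ and $s_1$ to $\tRe s_1=\varepsilon$, crossing the pole at $s_1=1/2$. The residue is
$$\Wt_2(\tfrac12,s_2;u)\,g^{-1/2-s_2}\,L(\tfrac12+s_2,F)\,\mathcal F(\ldots,\tfrac12,s_2)\prod_{p\mid d\ell hk}(1-p^{-1}).$$
By Lemma \ref{lem:MellinXY} with $u=d/(Qgh)$, $|\Wt_2(\tfrac12,s_2;u)|\ll Q^{(1+\eta)/2}u^{-\varepsilon}$ times rapid decay in $|s_2|$ and in $|s_1+s_2|$. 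Here $\mathcal F$ at $s_1=1/2$ is bounded by $Q^\varepsilon\mathcal J(g)$; the factor $(g_1g_2)^{-1/2-s_2}$ in $\mathcal F$ yields exactly the weights defining $\mathcal J(g)$ in \eqref{eqn:calJgdef}, and $\mathcal H$ is $\ll Q^\varepsilon$. Also $L(\tfrac12+s_2,F)\ll |s_2|^{O(1)}$. So the residue term is $\ll Q^{1/2+\varepsilon}\mathcal J(g)g^{-1/2}$.

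The remaining integrals on $\tRe s_1=\tRe s_2=\varepsilon$ carry $Q^{(1+\eta)\varepsilon}$ and, by the $s_1$-decay, are only $O(Q^{\varepsilon})$ per pair $(h,k)$. The $-\lambda_F(g)$ term is $\ll Q^\varepsilon|\lambda_F(g)|$, and it is also absorbed by $\mathcal J(g)$ through the $g_1=g_2=1$ term.

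\textbf{Summing over $h$ and $k$ (the main obstacle).} We then sum these bounds against $1/\phi(\ell hk)$. Trivially $\sum_{h,k}1/\phi(\ell hk)\ll Q^\varepsilon/\ell$ up to logarithms, but this needs $h$ restricted to a range of size a power of $Q$, and the contour terms must not pick up a factor of $H$. I would handle this as follows.

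\begin{itemize}
\item The exponential factor $\exp(-\kappa(Qgh/(dQ^{2-\delta}))^{1/3})$ truncates $h\ll Q^{1-\delta+\varepsilon}D/g$, as in \eqref{eqn:Hbdd}.
\item For the contour terms, I would instead move $\tRe s_2$ to the right rather than keep it at $\varepsilon$, gaining $u^{-\tRe s_2}$. Alternatively, I would exploit $u^{-\varepsilon}$ together with the $1/(hk)$ weight, so that the sum over $h$ is harmonic, $\sum_h 1/h\ll\log Q$.
\item In the residue term the $h$-dependence is only through $u^{-\tRe s_2}$ with $\tRe s_2=\varepsilon$, giving a factor $h^{\varepsilon}$.
\end{itemize}

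Combining these, $\sum_{h,k}\frac{1}{\phi(\ell hk)}\big(Q^{1/2+\varepsilon}\mathcal J(g)+Q^{\varepsilon}\big)\ll Q^{1/2+\varepsilon}\mathcal J(g)/\ell$, which is the claimed bound. The delicate points are controlling the $h$-sum uniformly when $u$ is small, and checking that the $1/\phi$ weights give $1/\ell$ rather than something larger.
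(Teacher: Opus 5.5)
Your route differs from the paper's at one point. The paper never crosses the pole of $\zeta(\frac12+s_1)$: it keeps $s_1$ on $\tRe s_1=\frac12+\varepsilon$, pays $Q^{1/2}$ from $\widetilde{\mathcal W}_2$, repeats the steps leading to \eqref{eqn:Spsibdd}, and sums $1/(hk)$ harmonically. Your residue at $s_1=\frac12$ is handled correctly and is in fact cleaner. With $s_1$ frozen, the factor $|s_1+s_2|^{-l}$ in Lemma \ref{lem:MellinXY} confines $s_2$ to $|s_2|\ll Q^{\varepsilon}$, so the residue is $\ll Q^{1/2+\varepsilon}\mathcal J(g)$ uniformly in $h$.

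The gap is the claim that the leftover integral on $\tRe s_1=\tRe s_2=\varepsilon$ is $O(Q^{\varepsilon})$ per pair $(h,k)$ ``by the $s_1$-decay''.

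\begin{itemize}
\item In Lemma \ref{lem:MellinXY}, each power of $|s_1|^{-1}$ costs a factor $1+uQ^{1+\eta}\asymp 1+dQ^{\eta}/(gh)$. So for $h\ll d/g$ the integrand is only $\ll |s_1|^{-1}|\zeta(\frac12+s_1)L(\frac12+s_2,F)\mathcal F|$ all the way up to $|s_1|\asymp d/(gh)$. Convexity then gives $\ll d/(gh)$ and mean squares give $\ll (d/(gh))^{1/4+\varepsilon}$; neither is $Q^{\varepsilon}$.
\item The delicate regime is therefore $u$ \emph{large} (small $h$), not $u$ small. For small $u$, the cutoff $h\ll Q^{1-\delta+\varepsilon}d/g$ and the $s_1$-decay work as you say.
\item Your first bullet is the right repair, but only in this regime and for a different reason than you give. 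Since $u\ll 1$, the factor $u^{-\tRe s_2}$ is a loss, not a gain. For $h\le d/g$, however, it combines with $(1+uQ^{1+\eta})^{\tRe s_2}$ into $(u^{-1}+Q^{1+\eta})^{\tRe s_2}\ll Q^{1/2+\varepsilon}$ on $\tRe s_2=\frac12+\varepsilon$ (no poles are crossed). On that line $L(\frac12+s_2,F)=O(1)$, and $\int|\zeta(\frac12+\varepsilon+it)|\,\frac{dt}{1+|t|}\ll(\log Q)^2$ by the mean square of $\zeta$. This gives $\ll Q^{1/2+\varepsilon}\mathcal J(g)$ per $(h,k)$ for $h\le d/g$; for $h\gg d/g$ your treatment on $\tRe s_2=\varepsilon$ stands.
\item Alternatively, keep $\tRe s_2=\varepsilon$, bound the leftover integral by $Q^{\varepsilon}\mathcal J(g)(1+d/(gh))^{1/4}$ via mean squares, and use $\sum_h h^{-1}(d/(gh))^{1/4}\ll (d/g)^{1/4}\le Q^{1/4}$.
\item The leftover integral must also keep the factor $\mathcal J(g)$. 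It has it, via $|\mathcal F|\ll Q^{\varepsilon}\mathcal J(g)$ and $|\lambda_F(g)|\le\mathcal J(g)$. A bare $Q^{\varepsilon}$, as in your final display, cannot be absorbed into $Q^{1/2+\varepsilon}\mathcal J(g)$, because $\mathcal J(g)$ may be small.
\end{itemize}

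The paper's sketch also passes over this small-$h$ range without comment. It keeps $\tRe s_1=\frac12+\varepsilon$ and treats the $s$-integral over $|s|\ll S_0$ as $O(Q^{\varepsilon})$, and on that line the $\tRe s_2$-shift would cost $Q$ rather than $Q^{1/2}$. So your residue split is the better setup, once the range $h\ll d/g$ is handled as above.
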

\begin{proof}
	We have 
	\begin{align}
		\mathcal {M}\ll \frac{Q^\varepsilon}{\ell } \sum_{\substack{h>0}} \frac{1}{hk} \Big|\sum_{\substack{M, N \\ (M,N)=1 \\ (MN, d \ell h) = 1}} \frac{A_F(1, gN)}{\sqrt{MN}}  \mathcal W^\pm\Big(gM, gN; \frac{d}{Qgh}\Big) \Big|.
	\end{align} 
To bound the sums over $M$ and $N$, we use Lemma \ref{lem:MellinXY} to Mellin invert $\mathcal W$ to seperate $M,N$.  The key difference is that we move $\Re(s_1)=1/2+\varepsilon$ due to the pole at $s_1=1$ which gives
	\begin{align}
	\mathcal {M}\ll \frac{Q^{1/2+\varepsilon} \mathcal J(g)}{\ell} \sum_{\substack{h>0\\ k\ll A Q_2}} \frac{1}{hk} \max_{\substack{z = 2\varepsilon + it\\ t\ll Q^\varepsilon}}\int_{\substack{(\varepsilon)\\ s \ll S_0}} \frac{1}{1+|s|}
	\Big(\Big|L(1+s, \psi_0)L(\frac{1}{2}+z - s, F\times \overline{\psi_0})\Big| + 1\Big) ds
	\end{align}
	upon following the steps that led to \eqref{eqn:Spsibdd} where $\psi_0$ is the principal character modulo $\ell h$.  Hence
	
	$$\mathcal M \ll  \frac{Q^{1/2+\varepsilon} \mathcal J(g)}{\ell},
	$$
	as claimed. 
\end{proof}

Next we estimate the contribution from the $g$-sum to 

\begin{lemma}\label{lem:gsumbdd}
	With $\mathcal J(g)$ as defined in \eqref{eqn:calJgdef}, we have
	\begin{equation}
		\sum_{g\ll Q^{1+\varepsilon}} \frac{\mathcal J(g)}{g} \ll Q^{\varepsilon}
	\end{equation}
\end{lemma}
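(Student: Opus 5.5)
We bound the sum by interchanging the order of summation so that the sum over $g$ becomes a product of two independent sums over Fourier coefficients. Write $g = g_2 m$ and $g_2 = g_1 n$ with $g_1\mid g_2\mid g$. Then $\frac{g_2}{g_1}=n$, $\frac{g}{g_2}=m$ and $g = g_1 n m$, so
\begin{align}
\sum_{g\ll Q^{1+\varepsilon}}\frac{\mathcal J(g)}{g}\le \sum_{g_1\ll Q^{1+\varepsilon}}\frac{1}{g_1^{2}}\sum_{n,m\ll Q^{1+\varepsilon}}\frac{|A_F(n,m)|}{n^{3/2}\,m}.
\end{align}
Here the factor $(g_1g_2)^{-1/2}g^{-1} = g_1^{-1/2}(g_1n)^{-1/2}(g_1nm)^{-1} = g_1^{-2}n^{-3/2}m^{-1}$. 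We drop the coprimality condition $(g_2,d)=1$ because all terms are nonnegative. The sum over $g_1$ converges, so it remains to show
\begin{align}
\sum_{n,m\le X}\frac{|A_F(n,m)|}{n^{3/2}m}\ll X^{\varepsilon}, \qquad X = Q^{1+\varepsilon}.
\end{align}

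\textbf{Main step.} We use the Rankin--Selberg mean-square bound
\begin{align}
\sum_{n^2m\le Y}|A_F(n,m)|^2\ll_F Y,
\end{align}
which follows from the analytic properties of $L(s,F\times\tilde F)$, the standard GL(3) Rankin--Selberg $L$-function.

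Split $n,m$ into dyadic ranges $n\sim N_0$, $m\sim M_0$. By Cauchy--Schwarz, the block contributes
\begin{align}
\frac{1}{N_0^{3/2}M_0}\Big(\sum_{n\sim N_0,\, m\sim M_0}|A_F(n,m)|^2\Big)^{1/2}(N_0M_0)^{1/2}\ll \frac{(N_0^2M_0)^{1/2}(N_0M_0)^{1/2}}{N_0^{3/2}M_0}=1.
\end{align}
There are $O((\log X)^2)$ dyadic blocks, so the total is $\ll (\log X)^2\ll Q^{\varepsilon}$.

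Alternatively, one can bound $|A_F(n,m)|\le \sum_{e\mid(n,m)}|A_F(n/e,1)||A_F(1,m/e)|$ via the Hecke relation and then apply Rankin--Selberg bounds for $\sum_{n\le X}|A_F(1,n)|/n\ll \log X$ to each factor. This gives the same conclusion.

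\textbf{Main obstacle.} The only point needing care is that no individual Ramanujan-type bound is available. We therefore must not bound $A_F(n,m)$ pointwise, since the Kim--Sarnak exponent $\vartheta$ would cost a power of $Q$. Instead everything must go through mean-square estimates, which is why the dyadic Cauchy--Schwarz is used. The weights $n^{-3/2}m^{-1}$ exactly balance the Rankin--Selberg density $N_0^2M_0$, so each dyadic block contributes $O(1)$.
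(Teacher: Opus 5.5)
Your proof is correct. It reaches the bound by a different key input than the paper.

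The paper substitutes $g\mapsto g_2g$ and bounds the first index pointwise, $|A_F(g_2/g_1,g)|\ll (g_2/g_1)^{\vartheta}|A_F(1,g)|$, with the Kim--Sarnak exponent $\vartheta=5/14$. The $g_1,g_2$-sum then becomes $\sum_{g_1,n}g_1^{-2}n^{\vartheta-3/2}<\infty$, and Rankin--Selberg is used only for $\sum_{g}|A_F(1,g)|/g\ll\log Q$.

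You never use a bound towards Ramanujan. The two-variable mean square $\sum_{n^2m\le Y}|A_F(n,m)|^2\ll Y$ with dyadic Cauchy--Schwarz suffices. So does the alternative you give: the Hecke relation $A_F(n,m)=\sum_{e\mid(n,m)}\mu(e)A_F(n/e,1)A_F(1,m/e)$ followed by one-variable Rankin--Selberg.

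Your route is more self-contained. It also makes explicit the Hecke-relation step that the paper's pointwise inequality silently relies on. That inequality is not literally true: for example, $A_F(p,p)=A_F(p,1)A_F(1,p)-1$ is nonzero even when $A_F(1,p)=0$. It only holds up to a divisor sum over $e\mid(g_2/g_1,g)$, which converges. The paper's route is shorter, since it already uses $\vartheta=5/14$ elsewhere, e.g. in Lemma~\ref{lem:generatingfunction}.

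One correction to your closing remark: a pointwise bound is not forbidden here. On the variable $n=g_2/g_1$ it is harmless, because the weight $n^{-3/2}$ absorbs $n^{\vartheta}$ for any $\vartheta<1/2$. Only the variable $m=g/g_2$, weighted by $m^{-1}$, genuinely needs mean-square information.
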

\begin{proof}
	By definition
	\begin{align}
		\sum_{g \ll Q^{1+\varepsilon}} \frac{\mathcal J(g)}{g}
		&\ll \sum_{\substack{g\ll Q^{1+\varepsilon}\\g_1\mid g_2\mid g }}\frac{|A_F( \frac{g_2}{g_1},\frac{g}{g_2})|}{(g_1g_2)^{1/2}g}
		\sum_{\substack{g_2 g \ll Q^{1+\varepsilon}\\ g_1\mid g_2}} \bfrac{g_2}{g_1}^{\vartheta} \frac{|A_F( 1,g)|}{(g_1g_2)^{1/2}g_2g},
	\end{align}
for $\vartheta =5/14$.  The result follows by Rankin-Selberg theory.
\end{proof}

Putting the bounds from Proposition \ref{lem:M} and Lemma \ref{lem:gsumbdd} into \eqref{eqn:calGcleCcontribution2}, we see that $\mathcal M$ contributes to $\tilde{\mathcal G}(A, C, D)$ a quantity $\tilde{\mathcal {G}}(A, C, D;\mathcal M)$ bounded by 

\begin{align}
	\tilde{\mathcal {G}}(A, C, D;\mathcal M)\ll \frac{Q^{3/2+\varepsilon}}{A} \sum_{\substack{d\sim D}} \frac{1}{d} \sum_{\substack{c\sim C}} \sum_{\substack{g}} \frac{\mathcal J(g)}{g}  \sum_{\substack{b_d \mid d \\ b_g|g}} \sum_{b \ll Q_2 A}\sum_{k\ll Q_2A/b} \frac{1}{c^2 b^2 b_d b_gk} \ll \frac{Q^{3/2+\varepsilon}}{A},
\end{align}and dyadically summing over $A, C$ and $D$ suffices for Proposition \ref{prop:G}.

\section{Estimates for $P_2$}\label{sec:P2}
Using that $\mathcal Q$ is a subset of squarefree integers, we can rewrite \eqref{P2def} as 
\begin{equation}\label{P2def2}
	\begin{split}
		P_2&=\frac{1}{2}\sum_{d'q\in \mathcal Q}\frac{\mu(d')}{d'}\Psi\Big(\frac{d'q}{Q}\Big) \sum_{(m,d'q)=1}\frac{1}{\sqrt{m}}W\Big(\frac{m}{(d'q)^{1+\eta}}\Big)\\&\quad \quad \times \sum_{(n, d'q)=1}\frac{A_F(n,1)}{\sqrt{n}}V\Big(\frac{n}{(d'q)^{1+\delta}}\Big)\frac{1}{q}\sum_{\pm}\phi(q)S(\bar{m},\pm n; q).
	\end{split}
\end{equation}
Using $\phi(q)=\sum_{d\mid q}\mu(d)\frac{q}{d}$ we see that \eqref{P2def2} can be rewritten as
\begin{align}
	P_2
	=&\frac{1}{2}\sum_{\pm}\sum_{d'dq\in \mathcal Q}\frac{\mu(d')}{d'}\frac{\mu(d)}{d}\Psi\Big(\frac{d'dq}{Q}\Big) \sum_{(m,d'dq)=1}\frac{1}{\sqrt{m}}W\Big(\frac{m}{(d'dq)^{1+\eta}}\Big)\\& \quad \times\sum_{(n, d'dq)=1}\frac{A_F(n,1)}{\sqrt{n}}V\Big(\frac{n}{(d'dq)^{1+\delta}}\Big)S(\bar{m},\pm n; dq).
\end{align}

We split the sum over $ m, n$ into dyadic ranges using some fixed smooth function $U$ supported on $[1/2, 2]$ so that  
\begin{equation}
	P_2\ll Q^\varepsilon \sup_{\substack{M, N\\M\ll Q^{1+\eta}\\ N\ll Q^{1+\delta}} }|P_2(M, N)|
\end{equation}
where 
\begin{equation}
	\begin{split}
		P_2(M,N)&=\frac{1}{2}\sum_{\pm}\sum_{\substack{d'dq\in \mathcal Q}}\frac{\mu(d')\mu(d)}{d'd}\Psi\Big(\frac{d'dq}{Q}\Big) \sum_{( m,d'dq)=1}\frac{1}{\sqrt{m}} W\Big(\frac{m}{(d'dq)^{1+\eta}}\Big)U\big(\frac{m}{M}\big)\\
		&\quad \times \sum_{(n,d'dq)=1}\frac{A_F(n,1)}{\sqrt{n}}V\Big(\frac{n}{(d'dq)^{1+\delta}}\Big)U\big(\frac{n}{N}\big)S(\bar{m}, \pm n; dq).
	\end{split}
\end{equation}
Using the definition of $\mathcal Q$, we see that $	P_2(M,N)$ can be rewritten as
\begin{equation}
	\begin{split}
	P_2(M,N)=&\frac{1}{2}\sum_{\pm }\sum_{\substack{q_2 \sim Q_2 \\ u'u_1u_2=q_2}} \mu^2(q_2) \sum_{(d',q_2)=1}\sum_{(d, q_2d')=1}\frac{\mu(u'd')\mu(u_2d)}{u'd'u_2d}\\ &\times\sum_{(q, q_2d'd)=1}\mu^2(q)\Psi(\frac{q_2d'dq}{Q})\sum_{(m, q_2d'dq)=1}\frac{1}{\sqrt{m}}W\Big(\frac{m}{(q_2d'dq)^{1+\eta}}\Big)U\big(\frac{m}{M}\big)\\
	&\times\sum_{(n, q_2d'dq)=1}\frac{A_F(n,1)}{\sqrt{n}}V\Big(\frac{n}{(q_2d'dq)^{1+\delta}}\Big)U\big(\frac{n}{N}\big)S(\bar{m}, \pm n;\frac{q_2}{u'}dq)\label{P2MNdef}.
	\end{split}
\end{equation}

\subsection{$M$ small}
Using M\"obius inversion to detect the squarefree and coprime conditions in $q$, we can
rewrite $P_2(M, N)$ as
\begin{equation}
	\begin{split}
		P_2(M,N)=&\frac{1}{2}\sum_{\pm }\sum_{\substack{q_2\sim Q_2\\ u'u_1u_2=q_2}}\sum_{(c, q_2dd')=1}\mu(c) \mu^2(q_2)\sum_{(d', q_2)=1}\sum_{(d, q_2d')=1}\frac{\mu(u'd')\mu(u_2d)}{u'd'u_2d}\\ &\times\sum_{f\mid q_2d'd}\mu(f)\sum_{(q_0, q_2d'dc^2f)=1}\mu(q_0) \sum_{q}\Psi(\frac{q_2d'dc^2fq_0q}{Q})\sum_{(m, q_2d'dc^2fq_0q)=1}\frac{1}{\sqrt{m}}\\ &\times W\Big(\frac{m}{(q_2d'dc^2fq_0q)^{1+\eta}}\Big)U\big(\frac{m}{M}\big)\sum_{(n, q_2d'dc^2f)=1} \frac{A(q_0n,1)}{\sqrt{q_0n}}\\& \times V\Big(\frac{q_0n}{(q_2d'dc^2fq_0q)^{1+\delta}}\Big)U\big(\frac{q_0n}{N}\big)S(\bar{m}, \pm q_0n;\frac{q_2}{u'}dc^2fq_0q).
	\end{split}
\end{equation}
For some paramter $D_0$ to be choosen later, we write 
\begin{equation}\label{P2MNdecomp}
	P_2(M, N)=P_2^+(M, N, D_0)+P_2^-(M, N, D_0)
\end{equation}
where $P_2^+(M, N, D_0)$ denotes the contributions from $d'dc^2fq_0>D_0$ and $P_2^-(M, N, D_0)$ denotes the contribution from $d'dc^2fq_0\leq D_0$. 

We consider $P_2^+(M, N, D_0)$ first. 
Opening the Kloosterman sum, using the usual integral representations of $W$ and $V$ from \eqref{eqn:Vmellin} and \eqref{eqn:Wmellin} to separate the sum over $m, n$ and the sum over $q$, and then using the large sieve inequality in Lemma \ref{LargeSieve2}, we have that 
%
\begin{align}
	P_2^+(M, N, D_0)\ll& Q^{\varepsilon} \max_{\substack{u, v \ll Q^\varepsilon \\ \tRe u, \tRe v = \frac{1}{\log Q} }} \sum_{q_2\sim Q_2}\sum_{u'\mid q_2}\sum_{\substack{d', d, c,f\\ f\mid q_2d'd\\ Q\gg d'dc^2fq_0>D_0}}\frac{1}{d'd}\sum_{\substack{q\ll \frac{Q}{u'd'}\\ \frac{q_2}{u'}dc^2fq_0\mid q}}\sum_{a\bmod q}^*\\&\quad\quad \times \Big|\sum_{\substack{(m, u'd'q)=1\\ m\asymp M}}\frac{1}{m^{1/2+u}}\sum_{\substack{(n, u'd')=1\\ n\asymp N/q_0}}\frac{A(q_0n,1)}{\sqrt{q_0} n^{1/2+v}}e\Big(\frac{a\bar{m}q_0n}{q}\Big)\Big|\\
	\ll&  Q^\varepsilon \sum_{q_2\sim Q_2}\sum_{u'\mid q_2}\sum_{\substack{D_0<d'dc^2fq_0\ll Q\\ f\mid q_2d'd}} \frac{1}{d'd}\Big(\frac{Q^2}{u'q_2d'^2dc^2fq_0}\Big)^{1/2}\\&\quad\quad\times\Big(\frac{Q^2}{u'q_2d'^2dc^2fq_0}+ MN\Big)^{1/2}\frac{1}{q_0}\sum_{g_1\mid g_2\mid q_0}\frac{|A_F(\frac{q_0}{g_2}, \frac{g_2}{g_1})|^2}{g_1g_2}\\
	\ll& \frac{Q^{2+\varepsilon}}{D_0^{1/2}}+ Q^\varepsilon \sqrt{Q_2MN}Q.\label{largesieveboundMsmall}
\end{align}
Here we used the Hecke relation \eqref{heckerelationgl3} in the first inequality to obtain that
\begin{align}
	&\sum_{n\ll N/q_0}|\frac{A_F(q_0n,1)}{\sqrt{q_0n}}|^2\\
	&\ll \sum_{n\ll N/q_0} \frac{1}{q_0n}\sum_{g_1\mid g_2\mid q_0}|A_F(\frac{q_0}{g_2}, \frac{g_2}{g_1})|^2 \sum_{g_1\mid g_2\mid q_0}|A_F(\frac{n}{g_1g_2})|^2 \\
	&\ll  Q^\varepsilon \frac{1}{q_0}\sum_{g_1\mid g_2\mid q_0}\frac{|A_F(\frac{q_0}{g_2}, \frac{g_2}{g_1})|^2}{g_1g_2}\sum_{n\ll N/q_0q_1q_2}\frac{|A_F(n,1)|^2}{n}\\
	&\ll \frac{Q^\varepsilon }{q_0}\sum_{g_1\mid g_2\mid q_0}\frac{|A_F(\frac{q_0}{g_2}, \frac{g_2}{g_1})|^2}{g_1g_2}\label{A(qn)L2norm}
\end{align}
by Rankin--Selberg, and in the second inequality that
\begin{align}
	\sum_{Q\gg q_0>D_0}\frac{1}{q_0^{3/2}}\sum_{g_1\mid g_2\mid q_0}\frac{|A_F(\frac{q_0}{g_2}, \frac{g_2}{g_1})|^2}{g_1g_2}\ll \sum_{g_1, g_2\ll Q}\frac{|A(g_1, g_2)|^2}{g_1^2g_2(g_1g_2)^{3/2}}\sum_{g_3\gg D_0/g_1g_2}\frac{1}{g_0^{3/2}}\ll \frac{1}{\sqrt{D_0}}
\end{align}
using Hecke relations and Rankin-Selberg. 
Note that \eqref{largesieveboundMsmall} is sufficient as long as $D_0$ is not too small and $M$ is not too large ($M \ll Q^{1-\delta}/Q_2$).

When $d' d c^2 fq_0$ is small, we can make use of the cancellations of the sums of Kloosterman sums to obtain a satisfactory upper bound for $P_2^-(M, N, D_0)$. 
We recall \cite[Lemma 9.1]{CLMR6th} which is a refinement of Theorem 10 from Deshouillers--Iwaniec in the exceptional spectrum.
\begin{prop}\label{DIbound}
	Let $C, M, N, R,S$ be positive numbers and let $g(c,m,n,r,s)$ be a smooth function with compact support in $[C, 2C]\times (0, \infty)^4$ such that \begin{align}
		\Big|\frac{\partial ^{\nu_1+\nu_2+\nu_3+\nu_4 +\nu_5}}{\partial c^{\nu_1}\partial m^{\nu_2}\partial n^{\nu_3}\partial r^{\nu_4}\partial s^{\nu_5}}g(c,m,n,r,s)\Big|\ll_{\nu_i} c^{-\nu_1}m^{-\nu_2}n^{-\nu_3}t^{-\nu_4}s^{-\nu_5}
	\end{align}
	for any $\nu_i\geq 0$, $1\leq i\leq 5$. Assume that 
	\begin{align}
		\frac{CS\sqrt{R}}{\sqrt{MN}}\gg 1.
	\end{align}For $a_m, b_{n,r,s}$ complex sequences we have for any $\varepsilon>0$
	\begin{equation}
		\begin{split}
		&\sum_{\substack{R< r\leq 2R\\ S<s\leq 2S\\ (r,s)=1 }}\sum_{\substack{0<m\leq M\\ 0<n\leq N}}a_mb_{n,r,s}\sum_{\substack{c\\(r, c)=1}}g(c,m,n,r,s)S(m\bar{r}, \pm n; sc)\\
		& \ll_\varepsilon (CMNRS)^\varepsilon L(C, M, N, R, S)\|\bm a\|\|\bm b\|
		\end{split}
	\end{equation}
	with 
	\begin{equation}
		\begin{split}
		L(C, M, N, R, S)\ll & CS\sqrt{R}\sqrt{RS}\Big(1+\frac{\sqrt{M}}{\sqrt{RS}}\Big)\Big(1+\frac{\sqrt{N}}{\sqrt{RS}}\Big)\\& \times \Big(1+\frac{C^2S^2R}{MN(1+\frac{RS}{M})^2(1+\frac{RS}{N})}\Big)^{\theta}
		\end{split}
	\end{equation}
	where $\theta=7/64$ is the admissible bound towards the Ramanujan conjecture for $\mathrm{GL}(2)$ cusp forms over $\mathbb Q$. 
\end{prop}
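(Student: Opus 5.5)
The plan is to deduce Proposition \ref{DIbound} from the Kuznetsov trace formula for the congruence subgroup $\Gamma_0(rs)$, evaluated at the cusps $\infty$ and $1/s$ (or an Atkin--Lehner equivalent), following Deshouillers--Iwaniec \cite[Theorem 10]{DI}. Those cusps are chosen because the Kloosterman sums $S(m\bar r,\pm n;sc)$ with $(r,c)=1$ are exactly the Kloosterman sums attached to the pair $(\infty,1/s)$ for $\Gamma_0(rs)$. The moduli then run over $\gamma=sc\sqrt r$, of size $CS\sqrt R$. The hypothesis $CS\sqrt R/\sqrt{MN}\gg 1$ says that $4\pi\sqrt{mn}/\gamma\ll 1$, so the Bessel transforms live in the small-argument regime.

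\textbf{Step 1 (separating variables).} Separate the variables in $g$ by Mellin/Fourier inversion in $m,n,r,s$; the derivative bounds cost only $(CMNRS)^\varepsilon$. This leaves a smooth weight $\phi(4\pi\sqrt{mn}/\gamma)$ with $\phi$ supported near $X=\sqrt{MN}/(CS\sqrt R)\ll 1$.

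\textbf{Step 2 (Kuznetsov).} For each fixed $(r,s)$, apply Kuznetsov to convert the $c$-sum into three spectral pieces: a Maass cusp-form contribution, an Eisenstein contribution, and a holomorphic contribution, each weighted by the transforms $\hat\phi(t_j)$ and by Fourier coefficients $\rho_{j\infty}(m)\overline{\rho_{j,1/s}(n)}$.

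\textbf{Step 3 (regular spectrum).} Apply Cauchy--Schwarz on the spectral side and use the large sieve inequalities for Fourier coefficients from \cite{DI}. In the $m$-aspect at $\infty$ this gives $\sum_j|\sum_m a_m\rho_{j\infty}(m)|^2\ll(1+M/(RS))\|a\|^2$, up to the normalizing factors. In the $n$-aspect at $1/s$, the factor is $(1+N/(RS))$, and one must keep the sum over $r,s$ inside via the Deshouillers--Iwaniec averaged large sieve. For $|t_j|\ll 1$ we have $\hat\phi\ll 1$, and including the normalization $\gamma$ produces the factor $CS\sqrt R\sqrt{RS}(1+\sqrt{M/RS})(1+\sqrt{N/RS})$.

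\textbf{Step 4 (exceptional spectrum).} This is the main obstacle. For exceptional eigenvalues $t_j=i\theta_j$, the transform is $\hat\phi(t_j)\asymp X^{-2\theta_j}$. One inserts this growth into the large sieve using the density-type refinement of \cite{DI}: exceptional forms of level $rs$ are sparse, and the large sieve with weight $X^{-2\theta_j}$ loses only $(1+X^{-2}\cdot\text{(level-dependent factors)})^{\theta}$. The refinement of \cite[Lemma 9.1]{CLMR6th} obtains the specific denominator $MN(1+RS/M)^2(1+RS/N)$ by using the large sieve at cusp $\infty$ with $m$-length $M$ and at cusp $1/s$ with $n$-length $N$ separately. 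One then interpolates Hölder-style between the trivial bound and the weighted bound, and inserts $\theta_j\le\theta=7/64$ from Kim--Sarnak.

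\textbf{Step 5 (assembly).} Sum over $r,s$ dyadically. Then combine the regular and exceptional contributions; together they give the stated $L(C,M,N,R,S)$ times $\|\bm a\|\|\bm b\|$.

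In practice, since the statement is quoted verbatim from \cite[Lemma 9.1]{CLMR6th}, the proof here would simply cite that lemma after checking that the hypotheses match, including the coprimality $(r,s)=1$ and the smoothness conditions on $g$.
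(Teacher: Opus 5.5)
Your proposal matches the paper. The paper gives no argument of its own and simply quotes \cite[Lemma 9.1]{CLMR6th}, the Deshouillers--Iwaniec bound with the exceptional-spectrum refinement, so your final step of citing that lemma is exactly what it does; your Kuznetsov sketch on $\Gamma_0(rs)$ at the cusps $\infty, 1/s$ is the standard argument behind that lemma.

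Two minor points in the sketch. First, $S(m\bar r,\pm n;sc)$ agrees with $S_{\infty,1/s}$ only up to a unimodular factor depending on $n,r,s$; this factor is absorbed into $b_{n,r,s}$. Second, the factor $\sqrt{RS}$ in $L$ is just the cost of Cauchy--Schwarz over $(r,s)$ after applying the large sieve one level at a time, so no level-averaged large sieve is needed.
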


We estimate $P_2^-(M, N, D_0)$ by considering the contributions from each $d'\ll D_0$ and $dc^2fq_0\sim D$ with $D\ll D_0/d'$, which we denote by $P_2^-(M, N, d', D)$. We apply Proposition \ref{DIbound} to $P_2^-(M,N, d', D)$ with 
\begin{align}
	{\tt R}= M, \, {\tt S}=Q_2D,\,  {\tt C}=\frac{Q}{Q_2d'D}, \, {\tt N}=N, \, {\tt M}=1
\end{align}
 with $	\|\bm a\|^2=1$ and
\begin{align}
\|\bm b\|^2&=\sum_{\substack{m,n, s}}\Big|\sum_{\substack{\frac{q_2}{u'}dc^2fq_0=s\\ (c, q_2d'd)=1\\ u'\mid q_2, f\mid q_2d'd\\q_0\mid n}}\mu(c)\mu(f)\sum_{u_1u_2=\frac{q_2}{u'}}\frac{\mu(u_2d)}{u_2d}\frac{1}{\sqrt{m}}\frac{A_F(n,1)}{\sqrt{n}}\Big|^2\\
&\ll Q^\varepsilon\sum_{q_0\ll D} \sum_{q_0\mid s\ll Q_2D}\frac{1}{q_0}\sum_{g_1\mid g_2\mid q_0}\frac{|A_F(\frac{q_0}{g_2}, \frac{g_2}{g_1})|^2}{g_1g_2}\ll Q_2DQ^\varepsilon
\end{align}
using \eqref{A(qn)L2norm}, Hecke relation and Rankin--Selberg. 
Note that 
\begin{align}
	\frac{{\tt C S\sqrt{R}}}{\tt \sqrt{MN}}=\frac{Q\sqrt{M}}{d'\sqrt{N}}\gg \frac{Q^{1/2-\delta/2}}{D_0}
\end{align}
and thus under the condition that 
\begin{align}\label{D0condition}
	\frac{Q^{1/2-\delta/2}}{D_0}\gg 1
\end{align} we can apply Proposition \ref{DIbound} to obtain
\begin{equation}
	\begin{split}
	P_2^-(M, N;d',D)&\ll Q^{\varepsilon}\frac{Q}{d'} \sqrt{M}\sqrt{MQ_2D}\Big(1+\frac{1}{\sqrt{MQ_2D}}\Big)\Big(1+\frac{\sqrt{N}}{\sqrt{MQ_2D}}\Big)\\
	&\quad \times \Big(1+\frac{(\frac{Q}{d'})^2M}{N(1+MQ_2D)^2(1+\frac{MQ_2D}{N})}\Big)^{\theta}(Q_2 D)^{1/2}\\
	& \ll \frac{Q^{1+\varepsilon}}{d'} \Big(M\sqrt{Q_2D}+\sqrt{MN}\Big) \Big(1+\frac{Q^2}{d'^2Q_2^2D^2MN}\Big)^{\theta}(Q_2 D)^{1/2}\\
	& \ll \frac{Q^{1+\varepsilon}}{d'}\Big(MQ_2D \Big(\frac{Q^2}{M}\Big)^{\theta}+ \sqrt{MNQ_2D}\Big(1+\frac{Q^2}{d'^2Q_2^2D^2MN}\Big)^{\theta}\Big).
	\end{split}
\end{equation}
The second term on right hand side is an increasing function of $N$ and $D$.
With $N\ll Q^{1+\delta}$ and $D\ll D_0$ we have that for all $D\ll D_0$
\begin{equation}
P_2^-(M, N, d', D)\ll 	\frac{Q^{1+\varepsilon}}{d'}MQ_2D_0 \Big(\frac{Q^2}{M}\Big)^{\theta}+\frac{Q^{3/2+\delta/2+\varepsilon}}{d'}\sqrt{MQ_2D_0}\Big(1+ \big(\frac{Q^{1-\delta}}{d'^2Q_2^2D_0^2M}\big)^\theta\Big).
\end{equation}
Summing over $d'\ll D_0$, we see that 
\begin{align}
	P_2^-(M,N, D_0)\ll Q^{1+2\theta+\varepsilon}M^{1-\theta}Q_2D_0 +Q^{3/2+\delta/2+\varepsilon}\sqrt{MQ_2D_0}\Big(1+ \big(\frac{Q^{1-\delta}}{Q_2^2D_0^2M}\big)^\theta\Big).
\end{align}
%
Combining with \eqref{P2MNdecomp} and \eqref{largesieveboundMsmall}, we see that under \eqref{D0condition}
\begin{equation}\label{P2Kuznetsov}
	\begin{split}
	P_{2}(M, N)\ll&   \frac{Q^{2+\varepsilon}}{D_0^{1/2}}+
	 Q^{1+2\theta+\varepsilon}M^{1-\theta}Q_2D_0 +Q^{3/2+\delta/2+\varepsilon}\sqrt{MQ_2D_0}\Big(1+ \big(\frac{Q^{1-\delta}}{Q_2^2D_0^2M}\big)^\theta
	\Big),
	\end{split}
\end{equation}
where we have neglected the term $Q^{3/2+\delta/2+\varepsilon}\sqrt{MQ_2}$ since it is dominated by the last term. We will choose $D_0$ to minimize the right hand side of \eqref{P2Kuznetsov} to give a satisfactory bound for $P_2(M, N)$ when $M$ is not too large.

\subsection{$M$ large }
When $M$ is large, we shall apply Poisson summation to obtain a satisfactory upper bound for $P_2(M, N)$. 
Recall the expression for $P_2(M,N)$ in \eqref{P2MNdef}. We use M\"obius inversion to remove $(m, u'd')=1$ so that 
\begin{align}
	P_2(M, N)=&\frac{1}{2}\sum_{\pm }\sum_{\substack{u'u_1u_2=q_2\\ q_2\sim Q_2}}\mu^2(q_2)\sum_{(d',q_2)=1}\sum_{(d, q_2d')=1}\frac{\mu(u'd')\mu(u_2d)}{u'd'u_2d}\sum_{(q, q_2d'd)=1}\mu^2(q)\\&\times \Psi(\frac{q_2d'dq}{Q})\sum_{(n, q_2d'dq)=1}\frac{A_F(n,1)}{\sqrt{n}}V\Big(\frac{n}{(q_2d'dq)^{1+\delta}}\Big)U\big(\frac{n}{N}\big)\\ &\times \sum_{h\mid d'u'}\mu(h)\sum_{(m, \frac{q_2}{u'}dq)=1}\frac{1}{\sqrt{hm}}W\Big(\frac{hm}{(q_2d'dq)^{1+\eta}}\Big)U\big(\frac{hm}{M}\big)S(\overline{hm}, \pm n;\frac{q_2}{u'}dq)
\end{align}
Applying Poisson in $m$ to modulus $\frac{q_2}{u'}dq$, we obtain
\begin{align}
	P_2(M,N)=&\frac{1}{2}\sum_{\pm }\sum_{\substack{q_2\sim Q_2\\u'u_1u_2=q_2}}\mu^2(q_2)\sum_{(d', q_2)=1}\sum_{(d, q_2d')=1}\frac{\mu(u'd')\mu(u_2d)}{u'd'u_2d}\sum_{(q, q_2dd')=1}\mu^2(q)\Psi(\frac{q_2d'dq}{Q})\\ &\times \frac{1}{\sqrt{M}}\sum_{(n, q_2d'dq)=1}\frac{A_F(n,1)}{\sqrt{n}}V(\frac{n}{(q_2d'dq)^{1+\delta}})U\big(\frac{n}{N}\big)\sum_{h\mid d'u'}\mu(h)\\
	&\times \sumstar_{\alpha \bmod \frac{q_2}{u'}dq} S(\overline{h\alpha}, \pm n;\frac{q_2}{u'}dq)\sum_{m\equiv \alpha \bmod \frac{q_2}{u'}dq}\frac{\sqrt{M}}{\sqrt{hm}}W\Big(\frac{hm}{(q_2d'dq)^{1+\eta}}\Big)U\Big(\frac{hm}{M}\Big)\\
	=&\frac{1}{2}\sum_{\pm }\sum_{\substack{q_2\sim Q_2\\u'u_1u_2=q_2}}\mu^2(q_2)\sum_{(d', q_2)=1}\sum_{(d, q_2d')=1}\frac{\mu(u'd')\mu(u_2d)}{u'd'u_2d}\sum_{(q, q_2d'd)=1}\mu^2(q)\Psi\big(\frac{q_2d'dq}{Q}\big)\\ &\times \sqrt{M}\sum_{(n, q_2d'dq)=1}\frac{A_F(n,1)}{\sqrt{n}}V\Big(\frac{n}{(q_2d'dq)^{1+\delta}}\Big)U\big(\frac{n}{N}\big)\sum_{h\mid d'u'}\frac{\mu(h)}{h}\\
	&\times \sum_{m}\operatorname{KL}_3(\mp \bar{h}mn, \frac{q_2}{u'}dq)\Omega\Big(\frac{M}{(q_2d'dq)^{1+\eta}}, \frac{mMu'}{hq_2dq}\Big)
\end{align}
where 
\begin{equation}
	\begin{split}
	\operatorname{KL}_3(m, q)&=\frac{1}{q}\sumstar_{u,v \bmod q}e\Big(\frac{mu+v+\overline{uv}}{q}\Big).
\\\Omega(A, B)
	&=\int\frac{1}{\sqrt{x}}W(Ax)U(x)e(Bx)dx.
	\end{split}
\end{equation}
Note that we have $\operatorname{KL}_3(0,q)=\mu^2(q)/q$ and Deligne's bound implies that 
\begin{align}\label{kl3mqbound}
	\mathrm{KL}_3(m,q)\ll \frac{3^{\omega(q)}}{(m,q)}, \quad \text{ if } \mu^2(q)=1.
\end{align}
We have from integration by parts that 
\begin{align}\label{Omqbound}
	\Omega(A, B)\ll \Big(1+ \frac{|B|}{A+1}\Big)^{-K}
\end{align}
for any $K>0$.
Note that $\frac{M}{q_2d'dq}\ll Q^\eta$ and $q_2d'dq\sim Q$ and so \eqref{Omqbound} shows that
the contribution from $|\frac{mMu'}{hq_2dq}|\gg Q^\varepsilon$ to $P_2(M, N)$ is negligible, by choosing $\eta$ sufficiently small. Thus it is enough to consider the contribution from $|m|\ll  \frac{Q^{1+\varepsilon}h}{u'd'M}$. 

We first consider the contribution from $m=0$, which can be bounded by
\begin{align}
	Q^\varepsilon \sum_{q_2\sim Q_2}\sum_{u'u_2\mid q_2}\sum_{d',d, q}\frac{1}{u'd'u_2d}\Psi \Big(\frac{q_2d'dq}{Q}\Big)\sqrt{M}\sqrt{N}\sum_{h\mid d'u'}\frac{1}{h}\cdot\frac{u'}{q_2dq}\ll Q^\varepsilon\sqrt{MN}.
	\end{align}
	
	Now we consider the contributions from $m\not=0$. Without loss of generality, we can assume $m>0$. From \eqref{Omqbound}, we can insert a factor of $H(\frac{mMu'}{hq_2dq})$ with negligible error, where $H$ is a smooth function satisfying $H(x)\asymp (1+x)^{-K_0}$ for some sufficiently large $K_0>0$ and $H^{(j)}(x)\ll (1+x)^{-K_0-j}$ for all $j\in \mathbb N$. After using M\"obius inversion to detect $(n, q)=1$ we obtain
	\begin{align}
		P_2(M,N)\ll &\sqrt{M}Q^\varepsilon\sup_{\substack{x\asymp 1}}\sum_{\pm}\sum_{\substack{q_2\sim Q_2\\ u'u_1u_2=q_2}}\sum_{(d',q_2)=1}\sum_{(d, q_2d')=1}\frac{\mu(u'd')\mu(u_2d)}{u'd'u_2d}\sum_{(q, q_2d'd)=1}\mu^2(q)\\
		&\times\Psi\big(\frac{q_2d'dq}{Q}\big)\sum_{g\mid q}\mu(g)\sum_{\substack{(n, q_2d'd)=1\\ g\mid n}}\frac{A_F(n,1)}{\sqrt{n}}V\Big(\frac{n}{(q_2d'dq)^{1+\delta}}\Big)G(\frac{n}{N})
		 \sum_{h\mid d'u'}\frac{\mu(h)}{h}\\& \times\sum_{m}\mathrm{KL}_3(\mp \bar{h}mn,\frac{q_2}{u'}dq)  W\Big(\frac{xM}{(q_2d'dq)^{1+\eta}}\Big)e\Big(\frac{xmMu'}{hq_2dq}\Big)H \Big(\frac{mMu'}{hq_2dq}\Big)
	\end{align}
	We separate $m,n,q$ using the definition of $V, W$ and Mellin inversion to write 
	\begin{align}
		V\Big(\frac{n}{(q_2d'dq)^{1+\delta}}\Big)&=\frac{1}{2\pi i }\int_{(\varepsilon)} G(w)\frac{\gamma(w+\mathfrak a+1/2)}{\gamma (1/2+\mathfrak a)}\Big(\frac{n}{(q_2d'dq)^{1+\delta}}\Big)^{-w}\frac{dw}{w}
	\end{align}
and
	\begin{align}
		&e\Big(\frac{xMu'm}{hq_2dq}\Big)H\Big(\frac{mMu'}{hq_2dq}\Big)
		=\frac{1}{2\pi i}\int_{\varepsilon}\widetilde{H}_x(s)\Big(\frac{mMu'}{hq_2dq}\Big)^{-s}ds
	\end{align}
	where 
	\begin{align}
		\widetilde{H}_x(s)=\int y^{s-1} e(xy)H(y)dx.
	\end{align}
	We have by integration by parts and the decay of $H$ that
	\begin{align}\label{MellinHbound}
		\widetilde{H}_x(s)\ll \frac{1}{\Re(s)}\Big(\frac{1}{1+|s|}\Big)^K,\text{ for all } K>0,
	\end{align}keeping in mind that $x\ll 1$.
	
Therefore, we obtain
\begin{align}
	P_2(M,N)\ll &\sqrt{M}Q^\varepsilon\sup_{\substack{x\asymp 1}}\sum_{\pm}\sum_{\substack{q_2\sim Q_2\\ u'u_1u_2=q_2}}\sum_{(d',q_2)=1}\sum_{(d, q_2d')=1}\frac{\mu(u'd')\mu(u_2d)}{u'd'u_2d}\sum_{(q, q_2d'd)=1}\mu^2(q)\\
	&\times \Psi\big(\frac{q_2d'dq}{Q}\big) \sum_{g\mid q}\mu(g)\sum_{\substack{(n, q_2d'd)=1\\ g\mid n}}\frac{A_F(n,1)}{\sqrt{n}}G(\frac{n}{N})\int_{(\varepsilon)}G(w)\frac{\gamma(w+\mathfrak a+1/2)}{\gamma(1/2+\mathfrak a)}\\
	&\times n^{-w}(q_2d'dq)^{(1+\delta)w} \frac{dw}{w} \sum_{h\mid d'u'}\frac{\mu(h)}{h}\sum_{m\ll \frac{Q^{1+\varepsilon}h}{u'd'M}}\mathrm{KL}_3(\pm \bar{h}mn,\frac{q_2}{u'}dq) \\
	&\times W\Big(\frac{xM}{(q_2d'dq)^{1+\eta}}\Big)\int_{(\varepsilon)}\widetilde{H}_x(s)\Big(\frac{mMu'}{hq_2dq}\Big)^{-s}ds.
\end{align}
%

We combine $m,n$ by writing $l=mn$ with $l\ll L:=\frac{NQ^{1+\varepsilon}h}{u'd'M}$ to obtain 
\begin{align}
	P_2(M, N)\ll& \sqrt{M}Q^\varepsilon\sup_{\substack{x\asymp 1}} \int_{w}\int_{s}\Big|G(w)\frac{\gamma(w+\mathfrak a+1/2)}{\gamma(1/2+\mathfrak a)}\widetilde{H}_x(s)\Big|
	\\&\times \sum_{\substack{q_2\ll Q_2\\ u'u_2\mid q_2}}\sum_{(d', q_2)=1}\sum_{(d, q_2d')=1}\frac{1}{u'd'u_2d}\sum_{\substack{(g, q_2d'd)=1\\ g\leq Q/q_2d'd}}
	 \sum_{h\mid d'u'}\frac{1}{h}\\& \times \Big|\sum_{0\not=|l|\ll L}\beta(l)\sum_{(q, q_2d'dg)=1}\mathrm{KL}_3(\pm \bar{h}l, \frac{q_2}{u'}dgq)\mathscr{H}(q) \Big|dwds
\end{align}
where 
\begin{align}
	\beta(l)&=\beta_{q_2d'd,g,\frac{Q^{1+\varepsilon}h}{u'd'M},s,w}(l)=\sum_{\substack{(n, q_2d'd)=1\\ g\mid n}}\sum_{\substack{m\ll \frac{Q^{1+\varepsilon}h}{u'd'M}\\mn=l}}\frac{A_F(n,1)}{\sqrt{n}}U\big(\frac{n}{N}\big)n^{-w}m^{-s},\\
	\mathscr{H}(q)&=\mathscr{H}_{q_2d'dg, Q,x,M,\eta, s,w, \delta}(q)=\Psi\big(\frac{q_2d'dgq}{Q}\big)W\Big(\frac{xM}{(q_2d'dgq)^{1+\eta}}\Big)q^{s+(1+\delta)w}.
\end{align}
Applying Cauchy in $l$ so that the $q_2,d',d,g,h,l,q$-sum in $P_2(M, N)$ can be bounded by

\begin{align}
	&\Big(\sum_{\substack{q_2\ll Q_2\\u'u_2\mid q_2}}\sum_{(d', q_2)=1}\sum_{\substack{(d, q_2d')=1\\ q_2d'd\ll Q}}\frac{1}{u'd'u_2d}\sum_{\substack{(g, q_2d'd)=1\\ g\leq Q/q_2d'd}}
\sum_{h\mid d'u'}\frac{1}{h}\sum_{l}|\beta(l)|^2\Big)^{1/2}\\
	&\times \Big(	\sum_{\substack{q_2\ll Q_2\\ u'u_2\mid q_2}}\sum_{(d', q_2)=1}\sum_{\substack{(d, q_2d')=1\\ q_2d'd\ll Q}}\frac{1}{u'd'u_2d}\sum_{\substack{(g, q_2d'd)=1\\ g\leq Q/q_2d'd}}\sum_{h\mid d'u'}\frac{1}{h}\\&\quad\quad \times \sum_{0\not=|l|\ll L}\Big| \sum_{(q, q_2d'dg)=1}\mathrm{KL}_3( \bar{h}l,\frac{q_2}{u'}dgq)\mathscr H(q)\Big|^2\Big)^{1/2}.
\end{align}


For the first factor, we use \eqref{A(qn)L2norm}, the Hecke relations and the Rankin--Selberg bounds on coefficients to see that
\begin{align}
	&\sum_{\substack{q_2\ll Q_2\\ u'u_2\mid q_2}}\sum_{\substack{d',d\\ 
	q_2d'd\ll Q}}\frac{1}{u'd'u_2d}\sum_{\substack{(g, q_2d'd)=1\\ g\leq Q/q_2d'd}}\sum_{h\mid d'u'}\frac{1}{h}\sum_{l}|\beta(l)|^2\\
&\ll Q^\varepsilon \sum_{\substack{q_2\ll Q_2\\ u'u_2\mid q_2}}\sum_{\substack{d', d\\ q_2d'd\ll Q}}\frac{1}{u'd'u_2d}\sum_{\substack{(g, q_2d'd)=1\\ g\leq Q/q_2d'd}} \sum_{h\mid d'u'}\frac{1}{h}\sum_{\substack{g\mid n}}\frac{|A_F(n,1)|^2}{n}U\Big(\frac{n}{N}\Big)\sum_{m\ll \frac{Q^{1+\varepsilon} h}{u'd'M}}1\\
& \ll Q^\varepsilon  \frac{Q}{M}\sum_{\substack{q_2\ll Q_2\\ u'u_2\mid q_2}}\sum_{\substack{d', d\\ q_2d'd\ll Q}}\frac{1}{u'^2d'^2u_2d}\sum_{\substack{(g, q_2d'd)=1\\ g\leq Q/q_2d'd}} \frac{1}{g}\sum_{g_1\mid g_2}\frac{|A_F(\frac{g}{g_2}, \frac{g_2}{g_1})|^2}{g_1g_2}\\
& \ll Q^\varepsilon Q_2 \frac{Q}{M}.
\end{align}
For the second factor, we put a weight $\Phi(l/L)$ where  $\phi(x)$ is supported on $[-5,5]$, $\phi(x)\geq 1$ on $[-2,2]$ and $\phi^{(j)}(x)\ll (\log x)^{jC}$ for all $j$. 
Let $d^\dagger=\frac{q_2}{u'}dg$. We expand the square and apply Poisson in $l$ to modulus $d^\dagger[q, q']=d^\dagger q_0q_1q_1'$ where $q_0=(q, q'), q=q_0q_1, q'=q_0q_1'$ to obtain that for any $K>0$
\begin{align}\label{lsum}
	&\sum_{\substack{q, q'\\ (qq',q_2d'dg)=1\\\mu^2(q)=\mu^2(q')=1}}\mathscr{H}(q)\overline{\mathscr{H}(q')}\sum_{l}\Phi\Big(\frac{l}{L}\Big)\operatorname{KL}_3(\bar{h}l, d^\dagger q)\overline{\operatorname{KL}_3(\bar{h}l, d^\dagger q')}\\
	&=L\sum_{\substack{q, q'\\ (qq',q_2d'dg)=1\\\mu^2(q)=\mu^2(q')=1}}\sum_{|l|\ll \frac{d^\dagger q_0q_1q_1'Q^\varepsilon}{L}}\mathscr{H}(q)\overline{\mathscr{H}(q')}\sum_{l} \mathcal S(l, d^\dagger , q, q')\hat{\Psi}\Big(\frac{lL}{d^\dagger q_0q_1q_1'}\Big)+O_C(Q^{-K})
\end{align}
where 
\begin{align}\label{eqn:Ssumdef}
	\mathcal S(l, d,q, q')&=\frac{1}{d[q, q']}\sum_{\alpha \bmod d[q, q']}\operatorname{KL}_3(\bar{h}\alpha, dq)\overline{\operatorname{KL}_3(\bar{h}\alpha, dq')}e\Big(\frac{l\alpha}{d[q, q']}\Big).
\end{align}
Note that $\mu^2(d^\dagger q)=\mu^2(d^\dagger q')=1$ so that $d^\dagger[q, q']$ is squarefree. 


We first prove the following Lemma regarding the sum $S(l, d, q, q')$.

\begin{lemma}\label{lem:Ssumbdd}
With notation as above, we have 
$$|\mathcal S(0,d,q,q')|\le Q^\varepsilon \mathds{1}_{q=q'},
$$and for $l\neq 0$,

\begin{equation}\label{eq:main}
	|\mathcal S(l,d,q,q')|\le Q^{\varepsilon}		\frac{(dq_0,l)^{1/2}}{\sqrt{dq_0q_1q_1'}}.
\end{equation}

\end{lemma}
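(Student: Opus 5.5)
The plan is to exploit the fact that $d[q,q'] = d q_0 q_1 q_1'$ is squarefree, factor $\mathcal S(l,d,q,q')$ multiplicatively by the Chinese remainder theorem, and then evaluate or bound each prime factor separately. Write $D_1=dq_0$, the common part of the two moduli. Then $dq = D_1 q_1$ and $dq' = D_1 q_1'$, and $D_1, q_1, q_1'$ are pairwise coprime.

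\textbf{Step 1: CRT factorization.} Write $\alpha \bmod D_1q_1q_1'$ by CRT. The hyper-Kloosterman sums factor twisted-multiplicatively:
\[
\operatorname{KL}_3(x, ab)=\operatorname{KL}_3(x\bar a^3, b)\,\operatorname{KL}_3(x\bar b^3,a)
\]
(the normalization $1/q$ splits as $1/a\cdot1/b$), and the additive character $e(l\alpha/(D_1q_1q_1'))$ factors similarly. This gives $\mathcal S=\prod_{p\mid D_1q_1q_1'}\mathcal S_p$, where each local factor has one of three shapes, with $u$ a unit depending on the other primes:
\begin{itemize}
\item for $p\mid D_1$: $\mathcal S_p=\frac1p\sum_{\alpha\bmod p}\operatorname{KL}_3(u\alpha,p)\overline{\operatorname{KL}_3(u'\alpha,p)}e(l'\alpha/p)$;
\item for $p\mid q_1$: $\frac1p\sum_{\alpha}\operatorname{KL}_3(u\alpha,p)e(l'\alpha/p)$, the modulus $q'$ contributing nothing at $p$;
\item for $p\mid q_1'$: the symmetric factor with the conjugate.
\end{itemize}
Since $\omega(D_1q_1q_1')\ll\log Q/\log\log Q$, constants $C^{\omega}$ are $\ll Q^\varepsilon$, so it suffices to prove prime-by-prime bounds with absolute constants.

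\textbf{Step 2: the $q_1,q_1'$ factors.} Open $\operatorname{KL}_3(u\alpha,p)=\frac1p\sum_{x,y}^* e((u\alpha x+y+\overline{xy})/p)$ and sum over $\alpha$ first. This forces $ux\equiv -l'$. If $p\mid l$ this is impossible, since $x$ is a unit; the factor is $0$ apart from the $\alpha$-terms that vanish, and in particular $\mathcal S(0)$ vanishes unless $q_1=q_1'=1$, i.e.\ $q=q'$. If $p\nmid l$, the factor becomes $\frac1p\cdot\operatorname{Kl}_2$-type, which is $\ll p^{-1/2}$ by Weil. This gives the $(q_1q_1')^{-1/2}$ part of \eqref{eq:main} and the indicator in the $l=0$ claim.

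\textbf{Step 3: the $D_1$ factors.} Here I must show $\mathcal S_p\ll p^{-1/2}$ if $p\nmid l$, and $\mathcal S_p\ll 1$ if $p\mid l$. The latter is trivial from \eqref{kl3mqbound}. Likewise for $l=0$ all factors are $O(1)$, giving $|\mathcal S(0)|\le Q^\varepsilon\mathds 1_{q=q'}$. The multipliers $u,u'$ both equal $\bar h$ times cubes of the inverted complementary moduli, so $u'/u=(q_1/q_1')^3 \bmod p$ in general and need not equal $1$.

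For $p\nmid l$, the factor is a sum of the product of two hyper-Kloosterman sheaves twisted by an additive character. By Deligne/Katz, $\mathrm{Kl}_3$ is geometrically irreducible of rank $3$ with monodromy $SL_3$, and $[\times c]^*\mathrm{Kl}_3\otimes \mathcal L_\psi$ is not geometrically isomorphic to $\mathrm{Kl}_3\otimes(\text{rank one})$ in a way that trivializes the $H^2_c$ of the tensor product with $\mathcal L_{\psi(l\alpha)}$. Hence the Grothendieck--Lefschetz trace formula gives $\sum_\alpha \operatorname{KL}_3\overline{\operatorname{KL}_3}e(l\alpha/p)\ll p^{1/2}$, i.e.\ $\mathcal S_p\ll p^{-1/2}$. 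Alternatively, I would open both Kloosterman sums, turning $\mathcal S_p$ into a complete exponential sum in several variables of Laurent-polynomial type, and apply Bombieri--Sperber \cite{BS} (as the paper announces) to get square-root cancellation after checking nondegeneracy of the Newton polyhedron.

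\textbf{Main obstacle.} The main obstacle is this $D_1$ factor, especially verifying that there is no degenerate case: for instance when $u=u'$ and the twist interacts, or for small $p$. Small primes only cost a constant each and are absorbed into $Q^\varepsilon$. Multiplying the local bounds gives $\prod_{p\mid D_1,p\nmid l}p^{-1/2}\cdot(q_1q_1')^{-1/2}=(dq_0,l)^{1/2}(dq_0q_1q_1')^{-1/2}$, which is \eqref{eq:main}.
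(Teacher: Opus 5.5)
Your reduction matches the paper's. You work prime by prime over the squarefree modulus $dq_0q_1q_1'$. At $p\mid q_1q_1'$ the $\alpha$-sum gives $0$ if $p\mid l$ (hence the indicator $q=q'$ when $l=0$), and $p^{-1}$ times a classical Kloosterman sum if $p\nmid l$. At $p\mid(dq_0,l)$ you bound trivially; using \eqref{kl3mqbound} there is simpler than the paper's Weil-plus-trivial count. All of this is correct.

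\textbf{The gap.} The case that carries the content of the lemma is $p\mid dq_0$ with $p\nmid l$. You flag it as the main obstacle but do not settle it. Geometric monodromy $SL_3$ is not the relevant input, and the criterion you state is not the right one (taken literally it already fails for $u=u'$). What you need is $H^2_c=0$ for $[\times u]^*\mathcal{K}\ell_3\otimes\mathcal L_{\psi(l'\cdot)}\otimes([\times u']^*\mathcal{K}\ell_3)^\vee$ on $\mathbb{G}_{m,\overline{\mathbb F}_p}$. Equivalently, the geometrically irreducible sheaves $[\times u]^*\mathcal{K}\ell_3\otimes\mathcal L_{\psi(l'\cdot)}$ and $[\times u']^*\mathcal{K}\ell_3$ must not be geometrically isomorphic.

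\textbf{How to close it on your route.} For $l'\not\equiv0$ the non-isomorphism is forced by the structure at $\infty$. $\mathcal{K}\ell_3$ has all breaks $1/3$, whereas twisting by $\mathcal L_{\psi(l'\cdot)}$, whose break is $1$, makes every break equal to $1$. The same observation shows that the rank-$9$ tensor product has Swan conductor $9$ at $\infty$ and is tame at $0$. Grothendieck--Ogg--Shafarevich then gives $\dim H^1_c=9$. Hence the sum over $\alpha\in\mathbb F_p^\times$ of $\operatorname{KL}_3(u\alpha,p)\overline{\operatorname{KL}_3(u'\alpha,p)}e(l'\alpha/p)$ is at most $9\sqrt p$, and $|\mathcal S_p|\le 9p^{-1/2}+p^{-3}$ uniformly in $u,u'$. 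With this break computation added, your route is complete.

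\textbf{What the paper does instead.} It opens both $\operatorname{KL}_3$'s and sums over $\alpha$ first. This imposes a linear relation between the inner variables of the two sums; in the paper's notation, $q_1u'\equiv q_1'u+hl\pmod p$. The local factor then collapses to $p^{-2}\sum_x S(1,b(x);p)\,S(1,c(x);p)$ with $b(x)/c(x)=q_1'^2(q_1'x+hl)/(q_1^3x)$. This ratio is non-constant exactly when $p\nmid l$, and \cite[Theorem 4]{BS} gives $p^{3/2}$ for the $x$-sum.

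Your fallback of opening both sums and checking a Newton polyhedron misses this dimension-reducing $\alpha$-summation. That summation is what turns the nondegeneracy hypothesis into the explicit non-constancy of $b/c$. The sheaf argument, once the breaks are computed, is shorter and gives an explicit constant. The paper's route instead reduces everything to a citable estimate for sums of products of two Kloosterman sums.
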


\begin{proof}
For $l=0$, we have 
\begin{align}
	S(0, d^\dagger, q, q')&=\frac{1}{{d^\dagger}^3q_0^3q_1^2q_1'^2}\sum_{\alpha \bmod d^\dagger q_0q_1q_1'}\sumstar_{u, v\bmod{d^\dagger q_0q_1}}\sumstar_{u', v'\bmod {d^\dagger q_0q_1'}}\\&\quad \quad e\Big(\frac{\bar{h}\alpha u+v+\overline{uv}}{d^\dagger q_0q_1}\Big)e\Big(-\frac{\bar{h}\alpha u'+v'+\overline{u'v'}}{d^\dagger q_0q_1'}\Big).
\end{align}
If $q_1q_1'\not=1$, then $S(0, d^\dagger, q, q')=0$ from the sum over $\alpha$ so that 
$$S(0, d^\dagger, q, q')\ll \mathds{1}_{q=q'}.$$  
On the other hand, if $q_1 = q_1' = 1$, the sum over $\alpha$ forces $u = u'$, and the sum over $u$ becomes a Ramanujan sum of the form
$$\sumstar_{u \bmod d^\dagger q_0q_1} e\bfrac{\bar{u}(\bar v - \bar v')}{d^\dagger q_0} = \sum_{k|(d^\dagger q_0, \bar v - \bar v')} \mu\bfrac{d^\dagger q_0}{k} k,
$$
and this leads to the crude bound $Q^\varepsilon$.

If $l \not=0$, then modulo $q_1$ the exponential sum becomes a classical Kloosterman sum which is bounded by $1/\sqrt{q_1}$. Similarly for the sum modulo $q_1'$. For $p\mid d^\dagger q_0$, the sum modulo $p$ becomes (with $d''=d^\dagger q_0/p$)
\begin{align}
	&\frac{1}{p^3}\sum_{\alpha \bmod p}\sum_{u,v,u', v' \bmod p}e\Big(\frac{\overline{q_1d''}(\bar{h}\alpha u+v+\overline{uv})-\overline{q_1'd''}(\bar{h}\alpha u'+v'+\overline{u'v'})+\overline{d''q_1q_1'}l\alpha}{p}\Big)\\
	&\ll \frac{1}{p^2}\sum_{\substack{u, v,v'\bmod p}}e\Big(\frac{\overline{q_1d''}(v+\overline{uv})-\overline{q_1'd''}(v'+q_1\overline{(q_1'u+hl)v'})}{p}\Big).
\end{align}
Recall that $(p,h)=1$. If $p\mid l$, then the above is bounded by $1$ by applying the Weil bound for the $v, v'$-sum and the trivial bound for the $u$-sum. If $p\nmid l$, we can write the above as
\begin{align}
	&\frac{1}{p^2}\sum_{\substack{x,y,z\bmod p\\(xyz,p)=1\\}}e_p\Big(y+\frac{b(x)}{y}+z+\frac{c(x)}{z}\Big)
\end{align}
where $$b(x)=\frac{1}{(q_1d'')^2x}, \quad c(x)=\frac{q_1}{(q_1'd'')^2(q_1'x+hl)}.$$
Here, we have used the change of variable
$$y=\overline{q_1d''}v, \quad x=u, \quad z=-\overline{q_1'd''}v',$$ 
so that 
$$\frac{b(x)}{y}=b(x)q_1d''\overline{v}=\overline{q_1d''uv},$$
and 
$$\frac{c(x)}{z}=-c(x)q_1'd''\overline{v'}=-q_1\overline{q_1'd''}\overline{q_1x+hl}.$$
We have \begin{align}
	\frac{b(x)}{c(x)}=\frac{q_1'^2(q_1'x+hl)}{q_1^3x},
\end{align}
which is the constant $1$ if and only if $p\mid q_1^3-q_1'^3$ and $p\mid l$. Since we are assuming that $p\nmid l$, by Bombieri–Sperber \cite[Theorem 4]{BS}, we have the sum over $x,y,z$ modulo $p$ is $\ll p^{3/2}$. Therefore, we have 
\begin{align}
	S(l, d^\dagger , q, q')\ll Q^\varepsilon\frac{(d^\dagger q_0, l)^{1/2}}{\sqrt{d^\dagger q_0q_1q_1'}}.
\end{align}

\end{proof}

By Lemma \ref{lem:Ssumbdd} we obtain that \eqref{lsum} is bounded by (noting that $q_2d'dg=u'd'd^\dagger$)
\begin{align}
	&L Q^\varepsilon \Big(\sum_{\substack{q_0\\ q_1=q_1'=1}}1+\sum_{q_0, q_1, q_1'}\sum_{0\not=|l|\ll d^\dagger q_0q_1q_1'/L} \frac{(d^\dagger q_0, l)^{1/2}}{\sqrt{d^\dagger q_0q_1q_1'}}\Big)\Psi\Big(\frac{u'd'd^\dagger q_0q_1}{Q}\Big)\Psi\Big(\frac{u'd'd^\dagger q_0q_1'}{Q}\Big)\\
	&\ll LQ^{\varepsilon}\frac{Q}{u'd'd^\dagger }+LQ^\varepsilon\sum_{q_0, q_1, q_1'}\sum_{t\mid  d^\dagger q_0}\frac{t^{1/2}}{\sqrt{d^\dagger q_0q_1q_1'}}\frac{d^\dagger q_0q_1q_1'}{tL}\Psi\Big(\frac{u'd'd^\dagger q_0q_1}{Q}\Big)\Psi\Big(\frac{u'd'd^\dagger q_0q_1'}{Q}\Big)\\
	&\ll\frac{Q^{1+\varepsilon}L}{u'd'd^\dagger }+Q^{\varepsilon}\sqrt{d^\dagger }\big(\frac{Q}{d^\dagger u' d' g}\big)^{3}.
\end{align}

From the exponential decay of $G$, the Gamma factors and \eqref{MellinHbound}, we can truncate the $w, s$ to $|w|, |s|\ll Q^\varepsilon$ with a negligible error term. It then follows that 
\begin{align}
	P_2(M, N)&\ll (Q_2Q)^{1/2+\varepsilon}\Big(\sum_{\substack{q_2\asymp Q_2\\ d\ll D\\ d^\dagger =q_2dg/u'}} \sum_{g\ll Q/q_2d'd} \sum_{u'\mid q_2}\sum_{d'\ll Q}\\&\quad\quad\quad\quad \quad\quad\quad \times \sum_{h\mid d'u'}\frac{1}{u'd'dh}\big(\frac{QL}{u'd'd^\dagger}+\frac{Q^{3}}{{d^\dagger}^{5/2}(u'd'g)^3}\big)\Big)^{1/2}\\& \ll Q^{2+\varepsilon} \Big((\frac{Q_2N}{QM})^{1/2}+\frac{1}{Q_2^{1/4}}\Big)
\end{align}
using $L\ll N Q^{1+\varepsilon}h/(u'd'M)$. 
Thus for $M\gg Q^\delta Q_2^{3/2}$ we have 
\begin{align}\label{Mlarge}
	P_2(M, N)\ll Q^{2+\varepsilon}Q_2^{-1/4}.
\end{align}
For $M\ll Q^{\delta}Q_2^{3/2}$, we apply \eqref{P2Kuznetsov} to see that 
\begin{equation}\label{Msmall0}
	\begin{split}
	P_{2}(M, N)\ll & \Big( \frac{Q^{2+\varepsilon}}{D_0^{1/2}} +Q^{1+2\theta+\delta(1-\theta)+\varepsilon}Q_2^{5/2-3\theta/2}D_0 +Q^{3/2+\delta+\varepsilon}Q_2^{5/4}D_0^{1/2}\Big(1+ \big(\frac{Q^{1-2\delta}}{Q_2^{7/2}D_0^2}\big)^\theta\Big)
	\Big).
\end{split}
\end{equation}
By setting $$ \frac{Q^{2+\varepsilon}}{D_0^{1/2}} = Q^{3/2+\delta+\varepsilon}Q_2^{5/4}D_0^{1/2},$$
we choose
$D_0=Q^{1/2-\delta}Q_2^{-5/4}$, which satisfies \eqref{D0condition}, to obtain that for $M\ll Q^\delta Q_2^{3/2}$ the upper bound
\begin{align}\label{Msmall}
	P_2(M,N)\ll &Q^\varepsilon (Q^{7/4+\delta/2}Q_2^{5/8} +Q^{3/2+(2-\delta)\theta}Q_2^{5/4-3\theta/2}+Q^{7/4+\delta/2}Q_2^{5/8}(1+Q_2^{-\theta})).
\end{align}
Combining \eqref{Mlarge} and \eqref{Msmall}, we obtain the Proposition \ref{prop:P2}.

\section{Acknowlegements}
The project began with a question Y.L. posed to X.L. after a seminar talk at KSU, and later discussed with J.L. at the goMATH mini-Symposium: Harmonic Analysis and Number Theory, at ETH Zurich in 2022. We thank the organizers for creating such a stimulating environment. Y.L. acknowledges support from the Key R\&D Program of Shandong Province, China (No. 2026ZLYS01). X.L. acknowledges support from Simons Travel Grant 962494 and NSF DMS-2302672. 
J.L. acknowleges support from Valentin Blomer during the initial stages of the project and support from NSF DMS-2502537 and the Max Planck Institute for Mathematics toward its completion. 
\bibliographystyle{plain}
\bibliography{GL3GL1biblio}




\end{document}